\newcommand \D{\mathbb{D}}
\newcommand \CC{\mathbb{C}}
\newcommand \ds{\displaystyle}
\newcommand \deff{\text{def}}
\newcommand \Hl{\mathcal{H}^1}
\newcommand \goto{\underset{\epsilon\rightarrow 0}{\longrightarrow}}
\numberwithin{equation}{section}
\def\R{\mathbb{R}}
\def\C{\mathbb{C}}
\def\N{\mathbb{N}}
\def\C{\mathcal{C}}
\definecolor{verde}{RGB}{20,150,100}
\newcommand{\Om}{\Omega}
\newcommand{\om}{\omega}
\newcommand{\ra}{\rightarrow}
\newcommand{\vps}{\varepsilon}
\newcommand{\rau}{\rightharpoonup}
\newcommand{\sq}{\subseteq}
\newtheorem{thm}{Theorem}[section]
\newtheorem{rem}[thm]{Remark}
\newtheorem{prop}[thm]{Proposition}
\newtheorem{lem}[thm]{Lemma}
\newtheorem{defn}[thm]{Definition}
\begin{document}
\title[]{Stability and instability issues of the Weinstock inequality}

\author[D. Bucur]
{Dorin Bucur}
\address[Dorin Bucur]{  Univ. Savoie Mont Blanc, CNRS, LAMA \\
73000 Chamb\'ery, France
}
\email[D. Bucur]{  dorin.bucur@univ-savoie.fr}
\author[M. Nahon]
{Mickaël Nahon}
\address[Mickaël Nahon]{ Univ. Savoie Mont Blanc, CNRS, LAMA \\
73000 Chamb\'ery, France
}
\email[M. Nahon]{  mickael.nahon@univ-smb.fr}

\subjclass[2010]{35P15, 35J25}

\date{\today}
\maketitle

\begin{abstract}   
Given two  planar,  conformal, smooth open sets $\Om$ and $\om$,  we prove the existence of a sequence of smooth sets $\Om_n$ which geometrically converges to $\Om$ and such that the (perimeter normalized) Steklov eigenvalues of $\Om_n$ converge to the ones of $\om$. 
As a consequence, we answer a question raised by Girouard and Polterovich on the stability of the Weinstock inequality and prove that the inequality is genuinely unstable. However, under some a priori knowledge of the geometry related to the oscillations of the boundaries, stability may occur.
 \end{abstract}

\section{Introduction}
Let $\Om \sq \R^2$ be a bounded, open, simply connected set with a smooth boundary. We consider the Steklov eigenvalue problem on $\Omega$
\begin{equation*}
\begin{cases}
-\Delta u=0&\text{in }\Om,\\
\frac{\partial u}{\partial n}=\sigma u&\text{on }\partial\Om.
\end{cases}
\end{equation*}
For $k=0,1, 2, \dots $ we denote by $\sigma_k(\Om)$ the $k$-th  eigenvalue defined by  
\[\sigma_k\left(\Omega\right)=\underset{U\in \mathcal{U}_k(\Omega)}{\inf}\ \underset{u\in U\setminus \lbrace 0\rbrace}{\sup}\ \frac{\int\limits_{\Omega}|\nabla u|^2 dx}{\int\limits_{\partial\Omega} u^2d\sigma},\]
where $\mathcal{U}_k(\Omega)$ is  the family   of subspaces of dimension $k+1$ of $H^1(\Omega)$. Then
$$0=\sigma_0(\Om)< \sigma_1(\Om) \le \dots \ra +\infty.$$

Below, we denote by $d_H$ the Hausdorff distance between two sets. The first result of the paper is the following.
\begin{thm}\label{bn01}
Let $\Om, \om \sq \R^2$ be two smooth,  conformal open sets. Then there exists a sequence $(\Om_\epsilon)_{\epsilon>0}$ of smooth open sets  homeomorphic to $\Om$,   with uniformly bounded perimeter such that 
$$d_H(\partial \Om_\epsilon,\partial \Om) \underset{\epsilon\rightarrow 0}{\longrightarrow} 0\quad \mbox{and} \quad  \forall k\in \N,\lim_{\epsilon\rightarrow 0}  |\partial \Om_\epsilon| \sigma_k(\Om_\epsilon)= |\partial \om| \sigma_k(\om).$$
\end{thm}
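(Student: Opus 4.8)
The strategy is to transfer the problem to a \emph{weighted} Steklov problem on $\Om$ by conformal invariance, and then to realize the required weight by a purely geometric microstructure on $\partial\Om$.

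\emph{Step 1: reduction to a weighted problem.} Let $\Phi:\Om\to\om$ be a conformal diffeomorphism, smooth up to the boundary, and set $w=|\Phi'|$ on $\partial\Om$, a smooth strictly positive function. For $v\in H^1(\om)$ and $u=v\circ\Phi$, the two–dimensional conformal invariance of the Dirichlet energy gives $\int_\om|\nabla v|^2 = \int_\Om|\nabla u|^2$, while $d\sigma$ on $\partial\om=\Phi(\partial\Om)$ transforms as $d\sigma_{\partial\om}=w\,d\sigma_{\partial\Om}$, so that $\int_{\partial\om} v^2\,d\sigma = \int_{\partial\Om} u^2\, w\,d\sigma$. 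Hence $\sigma_k(\om)$ equals the $k$-th eigenvalue $\sigma_k^w(\Om)$ of the weighted Rayleigh quotient $\int_\Om|\nabla u|^2 \big/ \int_{\partial\Om} u^2 w\,d\sigma$, and $|\partial\om|=\int_{\partial\Om} w\,d\sigma$. Since the product $\big(\int_{\partial\Om} w\,d\sigma\big)\,\sigma_k^w(\Om)$ is invariant under $w\mapsto cw$, I may rescale and assume $w\ge 1$ on $\partial\Om$. It therefore suffices to construct $\Om_\epsilon$, geometrically close to $\Om$, whose genuine Steklov spectrum realizes the weighted one.

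\emph{Step 2: geometric realization of the weight.} I would cover $\partial\Om$ by consecutive arcs $I_j$ of length $\approx\epsilon$ and replace, on each $I_j$, the boundary by a finely toothed comb contained in an $\epsilon$-neighborhood of $I_j$: a family of thin outward teeth of height $h_\epsilon\to 0$ whose total length equals $\int_{I_j} w\,d\sigma$, so that the local boundary-length density of $\Om_\epsilon$ approximates $w$ (this is where $w\ge1$ is used). Taking $h_\epsilon\to 0$ forces $d_H(\partial\Om_\epsilon,\partial\Om)\to 0$, while summing tooth lengths gives $|\partial\Om_\epsilon|\to\int_{\partial\Om} w\,d\sigma=|\partial\om|$, in particular a uniform perimeter bound. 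A final rounding of corners yields smooth sets homeomorphic to $\Om$ without affecting the limits.

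\emph{Step 3: spectral convergence, and the main obstacle.} It remains to show $\sigma_k(\Om_\epsilon)\to\sigma_k^w(\Om)$, whence the product converges to $|\partial\om|\sigma_k(\om)$. For the upper bound I would use the $k+1$ first weighted eigenfunctions as test functions on $\Om_\epsilon$, extended to be essentially constant across each thin tooth; their Dirichlet energy on the teeth is $o(1)$ while $\int_{\partial\Om_\epsilon} u^2\,d\sigma \to \int_{\partial\Om} u^2 w\,d\sigma$, giving $\limsup_\epsilon \sigma_k(\Om_\epsilon)\le\sigma_k^w(\Om)$. The delicate part is the lower bound: one extracts weak $H^1$ limits of normalized eigenfunctions through a uniform trace/extension estimate, then uses lower semicontinuity of the Dirichlet energy together with weak-$*$ convergence of the boundary measures of $\Om_\epsilon$ (transported to $\partial\Om$) towards $w\,d\sigma$ to identify the limit as a weighted eigenfunction. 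The heart of the argument is to tune the tooth scales (height $h_\epsilon$, width, number per arc) so that simultaneously the teeth carry negligible Dirichlet energy, the boundary $L^2$-mass does not concentrate on them, and no single tooth supports a low-energy localized Steklov mode — i.e. the first nonzero Steklov eigenvalue of an individual resonator must diverge, ruling out spurious small eigenvalues in the limit.
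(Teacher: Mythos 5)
Your skeleton is the paper's: pull the Steklov problem back through the conformal map to a weighted problem on $\Omega$ with weight $w=|\Phi'|$, rescale so the weight exceeds $1$ (the paper takes $\Theta=\Lambda|g'|>1$, using the invariance of the normalized product under $w\mapsto cw$ exactly as you do), realize the weight as a weak-$*$ limit of boundary measures $\mathcal{H}^1_{\lfloor\partial\Omega_\epsilon}\rightharpoonup\Theta\,\mathcal{H}^1_{\lfloor\partial\Omega}$ of perturbed domains, and conclude by a two-sided spectral convergence argument. But at the decisive point --- your Step 3 phrase ``tune the tooth scales so that no single tooth supports a low-energy localized Steklov mode'' --- you have named the difficulty rather than resolved it, and the Step 2 microstructure as you describe it makes it unresolvable. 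Take teeth of height $h_\epsilon$, width $\delta_\epsilon$ and spacing $d_\epsilon$: the added length density is $\approx 2h_\epsilon/d_\epsilon$, so the measure convergence is fine for many choices of scales; but a test function vanishing on $\Omega$ and equal to $1$ at a tooth tip has Dirichlet energy of order $\delta_\epsilon/h_\epsilon$ and boundary $L^2$ mass of order $h_\epsilon$, hence Rayleigh quotient of order $\delta_\epsilon/h_\epsilon^2$, and since one can place arbitrarily many disjoint such bumps, \emph{all} the eigenvalues $\sigma_1(\Omega_\epsilon),\dots,\sigma_k(\Omega_\epsilon)$ collapse to $0$ whenever $\delta_\epsilon=o(h_\epsilon^2)$. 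Excluding this forces teeth of bounded aspect ratio, $\delta_\epsilon\gtrsim h_\epsilon$ --- at which point your ``thin comb'' is no longer thin but a uniformly Lipschitz zigzag graph over $\partial\Omega$. That is precisely the paper's construction: $\partial\Omega_\epsilon=\lbrace x+f_\epsilon(x)\nu(x)\rbrace$ with $f_\epsilon=\lambda\,d_\epsilon$ a triangular wave of amplitude $O(\epsilon)$ and slope $\lambda=\sqrt{\Theta^2-1}$, which is where the normalization $\Theta>1$ enters quantitatively, not merely to make lengths addable.

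The reason the uniformly Lipschitz graph structure is not a convenience but the engine of the proof is that it gives the uniform cone condition, and with it everything your ``delicate part'' needs: uniformly bounded $H^1(\mathbb{R}^2)$ extension operators (so adapted eigenbases, bounded via Hersch--Payne--Schiffer, have weak $H^1$ limits), and the paper's key trace proposition: if $\Omega_\epsilon\to\Omega$ uniformly and $\Theta_\epsilon\mathcal{H}^1_{\lfloor\partial\Omega_\epsilon}\rightharpoonup\Theta\,\mathcal{H}^1_{\lfloor\partial\Omega}$ weakly-$*$, then $\int_{\partial\Omega_\epsilon}\Theta_\epsilon u_\epsilon^2\,d\sigma\to\int_{\partial\Omega}\Theta u^2\,d\sigma$ for every sequence $u_\epsilon\rightharpoonup u$ in $H^1(\mathbb{R}^2)$. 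This is proved by writing $\partial\Omega_\epsilon$ and $\partial\Omega$ as graphs $g_{i,\epsilon},g_i$ over a common finite family of squares and estimating $|u_\epsilon(t,g_{i,\epsilon}(t))-u_\epsilon(t,g_i(t))|$ via Fubini and Cauchy--Schwarz, picking up the factor $\Vert g_{i,\epsilon}-g_i\Vert_{L^\infty}^{1/2}$; it is exactly the statement that boundary $L^2$-mass cannot concentrate, which you invoke but do not establish. So the proposal is the right approach and its upper-bound half is sound, but the theorem's actual content lives in (i) the quantitative choice of microstructure (bounded-slope sawtooth, not thin teeth) and (ii) the trace-convergence proposition underpinning the lower bound; both are missing, and supplying them essentially reproduces Section 2 of the paper. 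Two smaller omissions: your construction should be done first for $\C^2$ boundaries and smooth weights and then transferred to the general case by a diagonal argument as in the paper, and the regularity of $\Phi$ up to the boundary needs Kellogg--Warschawski, which the paper cites.
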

In other words,  for every $n\in \N$,  in any tubular neighbourhood of $\Om$, one can find a smooth set $\Om_\epsilon$ which has almost the same first (normalized)  $n$ eigenvalues as $\om$. 

This result answers in the negative a question raised by Girouard and Polterovich in \cite{GP17.1} (see also \cite[Open problem 5.21]{GP17.2}) concerning the stability of the Weinstock inequality. Weinstock proved in \cite{WE54} that  the disk maximizes the product between the perimeter and the first Steklov eigenvalue in the class of simply connected planar sets. For the disk, the value of this product equals $2\pi$. The open problem of Girouard and Polterovich reads:
\smallskip

\noindent  {\it  "Let $\Om$ be a planar simply connected domain such that the difference $2\pi- |\partial \Om| \sigma_1(\Om)$ is small. Show that $\Om$ must be close  to a disk (in the sense of Fraenkel asymmetry or some other measure of proximity)."\qquad }

\smallskip

Theorem \ref{bn01} gives a negative answer to the question and, even more, it states that the maximal value $2\pi$ can be asymptotically achieved in the geometric neigbourhood of any smooth simply connected set. 

However, in the recent years, several spectral isoperimetric inequalities have been proved to be stable, in the vein of the quantitative isoperimetric inequality proved by Fusco, Maggi and Pratelli in 2008 \cite{FMP08}.  Stability, involving the Fraenkel asymmetry, holds for the Faber-Krahn, the Saint-Venant or the Sezg\"o-Weinberger inequalites, but also many others. For the Steklov problem, it was proved by Brasco, De Phillipis and Ruffini in 2012 \cite{BDPR12} that the Brock version of the Weinstock inequality is stable. This inequality involves the volume of the set instead of the volume of the boundary as a constraint. Precisely, it is proved in \cite{BDPR12} that in $\R^N$, for every open, bounded and smooth set, it holds
 $$|B|^\frac 1N  \sigma_1(B) - |\Om| \frac 1N \sigma_1(\Om)\ge C_N {\mathcal A}^2(\Om),$$
 where $B$ is a ball and ${\mathcal A}$ is the Fraenkel asymmetry.
 
 Coming back in $\R^2$ to the original Weinstock inequality, if one restricts to the class of convex sets, Weinstock himself implicitly found a stable version of the inequality. Precisely, he proved that if $\Om\sq \R^2$ is a bounded, convex set containing the origin, then
 $$ \pi \int_{\partial \Om} |x|^2 d\sigma- |\Om||\partial \Om|  \ge  \frac{|\partial \Om|}{2}  \int_{S^1} (h -\overline h)^2 d \sigma,$$
 where
 $h$ is the support function of the convex set, and $\overline h$ is its average. This inequality, readily gives a quantitative form of the inequality, in the class of convex sets
\begin{equation}\label{bn03}
2\pi - |\partial \Om| \sigma_1(\Om) \ge \frac {|\partial \Om|}{\int_{\partial \Om} |x|^2 dx}\int_{S^1} (h -\overline h)^2 d \sigma. 
\end{equation}
In a recent paper \cite{GMPT19} (still in the class of convex sets) the right hand side is replaced by
$C {\mathcal A}^\frac 52(\Om)$.
We point out that, very surprisingly,  inequality \eqref{bn03} is published only in the preprint version \cite{W2} and does not figure in the final version the paper \cite{WE54}. 
 
 In $\R^N$, for $N \ge 3$, under a boundary volume constraint, a similar version to the Weinstock inequality is proved to hold in the class of convex sets (see \cite{BFNT}) and it is proved, by Fraser and Schoen, not to hold in the class of contractible domains (see \cite{FS19}). Under convexity hypotheses (see \cite{BFNT}) one has
 \begin{equation}\label{bn02}
  \dfrac{\int_{\partial \Omega} |x|^2 d\sigma }{|\partial \Omega| |\Omega|^\frac 2N}\ge \omega _N^{-\frac 2N},
  \end{equation}
 which by standard use of test functions leads to the $N$-dimensional version of the Weinstock inequality  (above $\omega_N$ is the volume of the unit ball in $\R^N$, and the right hand side corresponds to the ball). 
 A quantitative version   in terms of the Fraenkel asymmetry has been established in \cite{GMPT19}. 
 
Theorem \ref{bn01} in this paper  asserts that there is no general stability of the Weinstock inequality in the class of simply connected sets in $\R^2$. We analyse this issue and give a stability result provided some rigidity is a priori known on the boundary. The convexity assumption in \cite{GMPT19} is such a rigidity. Our result involves some control of the norm of the conformal mapping but allows oscillations of the boundary. Precisely, for all $K>0$, $\alpha\in ]0,1]$, we denote $\mathcal{A}(K,\alpha)$ the set of smooth simply connected sets $\Omega$ with perimeter $2\pi$ such that there exists a conformal mapping $g:\D\rightarrow \Omega$ with:
\[\Vert \log\left(|g'|\right)\Vert_{\C^{0,\alpha}(\partial \D)}\leq K.\]

\begin{thm}\label{bn05}
 There exists a constant $C=C(K, \alpha)>0$ such that for any $\Omega\in \mathcal{A}(K, \alpha)$:
\[\sigma_1(\D)-\sigma_1(\Omega) \geq C \Big (\overline{d_H}(\Omega, \D)\Big )^{2(1+\alpha^{-1})} .\]
\end{thm}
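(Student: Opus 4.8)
\emph{Proof plan.} The plan is to pull everything back to the unit disk by the conformal map, encode the geometry in the single scalar $w:=\log|g'|$ on $\partial\D$, and then establish three quantitative facts whose combination produces the exponent $2(1+\alpha^{-1})$. Let $g:\D\to\Om$ realize the bound defining $\mathcal{A}(K,\alpha)$, and put $\rho:=|g'|$ on $\partial\D=S^1$, $w:=\log\rho$, so that $\|w\|_{C^{0,\alpha}(S^1)}\le K$. Since the Dirichlet energy is conformally invariant and $d\sigma=\rho\,d\theta$, the Steklov spectrum of $\Om$ equals that of the weighted problem on $\D$,
\[\sigma_k(\Om)=\inf_{\dim U=k+1}\ \sup_{u\in U\setminus\{0\}}\frac{\int_\D|\nabla u|^2\,dx}{\int_{S^1}u^2\,\rho\,d\theta},\]
while $|\partial\Om|=2\pi$ reads $\tfrac{1}{2\pi}\int_{S^1}\rho\,d\theta=1$, the disk corresponding to $\rho\equiv1$, $\sigma_1(\D)=1$. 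Precomposing $g$ with a Möbius automorphism of $\D$ (Hersch balancing) and applying a rigid motion to $\Om$ leaves every $\sigma_k$ unchanged; I use this to impose $\int_{S^1}e^{i\theta}\rho\,d\theta=0$ and to center $\Om$, which annihilates the first Fourier mode of $w$ and makes the coordinate functions admissible. Writing $g(z)=\sum_{n\ge1}c_nz^n$, the goals become \emph{(a)} $\sigma_1(\D)-\sigma_1(\Om)\gtrsim\|w\|_{H^{-1}(S^1)}^2$ and \emph{(b)} $\overline{d_H}(\Om,\D)\lesssim\|w\|_{L^\infty(S^1)}$.

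Step \emph{(a)}, a quantitative Weinstock inequality, is the core. I would test the quotient with the harmonic coordinate functions $\mathrm{Re}\,g,\mathrm{Im}\,g$: holomorphy gives $\int_\D|\nabla\mathrm{Re}\,g|^2=\int_\D|\nabla\mathrm{Im}\,g|^2=|\Om|$ with vanishing cross term, so the boundary Gram matrix $M$ satisfies $\lambda_{\min}(M)=\tfrac12\big(\int_{\partial\Om}|x|^2\,d\sigma-\big|\int_{S^1}g^2\rho\,d\theta\big|\big)$ and $\sigma_1(\Om)\le|\Om|/\lambda_{\min}(M)$. Expanding in the $c_n$ and using the perimeter constraint yields the identity
\[\int_{\partial\Om}|x|^2\,d\sigma-2|\Om|=2\pi\sum_{n\ge2}|c_n|^2+O\big(\big(\sum_{n\ge2}|c_n|^2\big)^{3/2}\big),\]
and since $\widehat w_{n-1}\sim n\,c_n$ one has $\sum_{n\ge2}|c_n|^2\simeq\|w\|_{H^{-1}}^2$, giving \emph{(a)}. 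The delicate point is the doubly degenerate eigenvalue of the disk: the frequency-$2$ mode of $\rho$ contributes to the deficit \emph{linearly} (to leading order it equals $|\widehat\rho_2|$), which only helps, whereas every higher mode contributes quadratically; one must simultaneously dominate the balancing term $\big|\int_{S^1}g^2\rho\,d\theta\big|$ by this favorable linear term and control the remainder from $\rho=e^{w}$ using $K$. Proving that the resulting quadratic form is coercive in $H^{-1}$, uniformly over $\mathcal{A}(K,\alpha)$ and with an explicit constant, is the main obstacle.

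For \emph{(b)}, the boundary is $g(e^{i\theta})=\int_0^{e^{i\theta}}g'(\zeta)\,d\zeta$, so its radial deviation from the circle is, to leading order, the antiderivative $R(\theta)=\mathrm{Re}\sum_{n\ge2}c_ne^{i(n-1)\theta}$ of $w$; as $\partial_\theta R=w$ up to lower order, $\|R\|_{L^\infty}\le C\|w\|_{L^\infty}$ and thus $\overline{d_H}(\Om,\D)\lesssim\|w\|_{L^\infty}$. It then remains to interpolate. On $S^1$ one has
\[\|w\|_{L^\infty}\lesssim\|w\|_{H^{-1}}^{\alpha/(1+\alpha)}\,\|w\|_{C^{0,\alpha}}^{1/(1+\alpha)},\]
(sharp on a bump of height $A$ and width $\ell$, where the three norms are $\sim A,\ A\ell,\ A\ell^{-\alpha}$). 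Inserting $\|w\|_{C^{0,\alpha}}\le K$ and \emph{(b)} gives $\overline{d_H}\lesssim K^{1/(1+\alpha)}\|w\|_{H^{-1}}^{\alpha/(1+\alpha)}$, i.e. $\|w\|_{H^{-1}}\gtrsim K^{-1/\alpha}\,\overline{d_H}^{(1+\alpha)/\alpha}$; squaring and combining with \emph{(a)} yields
\[\sigma_1(\D)-\sigma_1(\Om)\gtrsim\|w\|_{H^{-1}}^2\gtrsim K^{-2/\alpha}\,\overline{d_H}^{\,2(1+\alpha)/\alpha}=C(K,\alpha)\big(\overline{d_H}(\Om,\D)\big)^{2(1+\alpha^{-1})}.\]
In particular the large exponent arises exactly because the Hausdorff distance is controlled in $L^\infty$, a norm that must be interpolated between the weak $H^{-1}$ norm detected by the spectral deficit and the Hölder norm prescribed by the class.
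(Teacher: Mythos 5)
Your proposal reproduces the paper's scaffolding almost exactly: pass to the weighted problem $\sigma_1(\D,\Theta)$ with $\Theta=|g'|$, kill the first Fourier mode by a M\"obius automorphism, show that the spectral deficit controls a negative Sobolev norm of $\Theta-1$, upgrade to $L^\infty$ by a bump-function interpolation against the H\"older bound (your exponent $\alpha/(1+\alpha)=\frac{1}{1+\alpha^{-1}}$ is exactly the paper's Proposition \ref{EstimateCa}, and your use of $H^{-1}$ in place of the paper's $H^{-1/2}$ happens to yield the same final exponent, because in one dimension the $H^{1/2}$ norm of a bump is scale-invariant, so a bump of height $A$ and width $\ell$ has $H^{-1/2}$ norm $\sim A\ell$ just like its $H^{-1}$ norm), and finally convert $\Vert |g'|-1\Vert_{L^\infty(\partial\D)}$ into Hausdorff proximity --- which the paper proves non-perturbatively via Bloch's theorem ($|g''(z)|\leq \epsilon/(L(1-|z|))$, hence $|g(z)-g'(0)z|\leq \epsilon/L$), whereas your ``antiderivative to leading order'' sketch for (b) is only a linearization. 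The genuine gap is your step (a), which is the actual content of the theorem and which you yourself defer as ``the main obstacle''; worse, the route you sketch for it breaks down. Testing with $\mathrm{Re}\,g,\mathrm{Im}\,g$ gives $\sigma_1(\Omega)\leq 2|\Omega|\big/\bigl(\int_{\partial\Omega}|x|^2d\sigma-\bigl|\int_{S^1}g^2\rho\,d\theta\bigr|\bigr)$, and the subtracted balancing term is generically \emph{linear} in the perturbation while the gain is quadratic: for $g(z)=z+cz^3$ (which satisfies all your normalizations, since $\widehat{\rho}(\pm 1)=0$ does not kill $\widehat{\rho}(2)=\tfrac{3c}{2}$) one finds $\bigl|\int_{S^1} g^2\rho\,d\theta\bigr|=3\pi c+O(c^2)$ while $\int_{\partial\Omega}|x|^2d\sigma-2|\Omega|=O(c^2)$, so your claimed lower bound on the deficit is negative, hence vacuous. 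Rescuing this requires interlacing your test functions with the disk coordinates $\cos\theta,\sin\theta$ (which detect $\widehat{\rho}(2)$ linearly) and proving a uniform coercivity; none of that is supplied.

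Even setting that aside, your expansion $\int_{\partial\Omega}|x|^2d\sigma-2|\Omega|=2\pi\sum_{n\geq 2}|c_n|^2+O\bigl((\sum_{n\geq2}|c_n|^2)^{3/2}\bigr)$ is perturbative (and wrong as an identity: for $g=z+cz^n$ the left side equals $2\pi c^2(1+n^2/4)$, though the inequality ``$\geq$'' you need survives), while the class $\mathcal{A}(K,\alpha)$ only imposes $\Vert\log|g'|\Vert_{\C^{0,\alpha}}\leq K$ with $K$ arbitrary, so $\Theta$ need not be close to $1$ and the remainder cannot be absorbed. The paper circumvents all of this with the non-perturbative Proposition \ref{EstimationSobolev}: it perturbs the disk eigenfunction $u(z)=z$ by $\zeta\mathcal{H}\phi$, optimizes in $\zeta$, and dualizes in $\phi$ to obtain $\Vert\Theta-1\Vert_{H^{-1/2}(\partial\D)}\leq C\sqrt{\deff(\Theta)}$ for \emph{every} $\Theta$ with $\deff(\Theta)\leq 1$ and $\widehat{\Theta}(\pm1)=0$ --- no Fourier expansion of $g$, no smallness, and an estimate that is moreover sharp on $\Theta=1+\alpha\cos(Nt)$ by Lemma \ref{EstimateAbove}, unlike your $H^{-1}$ variant. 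A secondary omission: you invoke the M\"obius normalization without checking uniformity, whereas after replacing $g$ by $g\circ\phi_{\zeta_g}^{-1}$ the H\"older bound could degenerate as $|\zeta_g|\to1$; the paper needs, and proves by a topological argument, that $|\zeta_g|\leq r(K)<1$. As it stands, your proposal settles the peripheral steps and leaves the quantitative spectral estimate unproved.
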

Where $\overline{d_H}(\Omega,\omega)=\inf_{z\in\R^2}d_H(\Omega,\omega+z)$.
We shall prove that this result is sharp in the sense that no weaker a priori estimate is enough to obtain stability; in particular, a bound on $\Vert \log\left(|g'|\right)\Vert_{\C^0(\partial\D)}$ is not sufficient. However, there is no indication that the exponent $2(1+\alpha^{-1})$ is sharp. 

 The paper is organized as follows:   in the second section, we prove Theorem \ref{bn01}. The key of the proof is an approximation result of a weighted Steklov problem by a sequence of Steklov problems without weights but with oscillating boundaries. In the third section, we study the stability of the Steklov eigenvalue on the disk in terms of perturbation of the constant weight on the boundary. The geometric interpretation of this result gives us Theorem \ref{bn05}. The last section is devoted to the study of the sharpness of this result with some explicit computations.

\section{The  Steklov problem and some stability properties}

Throughout the paper we identify $\R^2$ with $\CC$ by $(x_1,x_2) \to z=x_1+ix_2$. For any open Lipschitz domain $\Omega$ of $\R^2$ and any non-negative function $\Theta \in  L^\infty (\partial \Om), \Theta \not=0$
we define  the (generalized) Steklov eigenvalues with the weight $\Theta$ on the boundary by:
\[\sigma_k\left(\Omega,\Theta\right)=\underset{U\in \mathcal{U}_k(\Omega)}{\inf}\ \underset{u\in U\setminus \lbrace 0\rbrace}{\sup}\ \frac{\ds \int\limits_{\Omega}|\nabla u|^2 dx }{\ds \int\limits_{\partial\Omega}\Theta u^2d\sigma},\]
 where $\mathcal{U}_k(\Omega)$ stands for the family of subspaces of dimension $k+1$ in $H^1(\Om)$. The spectrum is well defined, as a direct consequence of the continuous embedding $H^1(\Omega)\hookrightarrow H^{\frac{1}{2}}(\partial\Omega)$ and of the compact embedding $H^{\frac{1}{2}}(\partial\Omega)\hookrightarrow L^2(\partial\Omega)$. 
 
In particular, on a connected bounded Lipschitz domain $\Omega$, we have:
\[\sigma_1(\Omega,\Theta)=\underset{ \int\limits_{\partial\Omega}\Theta u=0}{\underset{u\in H^1(\Omega)}{\inf}}\ \frac{\ds \int\limits_{\Omega}|\nabla u|^2 dx}{\ds \int\limits_{\partial\Omega}\Theta u^2d\sigma}.\]


The instability result lies on two observations:\begin{itemize}[label=\textbullet]
\item Assume $\Omega$ and $\omega$ are two domains with weights $\Theta_\Omega,\Theta_\omega$ on the boundary, we say that $(\Omega,\Theta_\Omega)$ and $(\omega,\Theta_\omega)$ are conformally equivalent if there is a conformal map $g:\Omega\to\omega$ such that $|g'|\Theta_\omega\circ g=\Theta_\Omega$. Under this condition, one has $\sigma_k(\Omega,\Theta_\Omega)=\sigma_k(\omega,\Theta_\omega)$.
\item By an homogeneization process, a domain $\Omega$ with a weight $\Theta\geq 1$ can be approached by domains $(\Omega_\epsilon)$ such that $\sigma_k(\Omega_\epsilon)\underset{\epsilon\rightarrow 0}{\longrightarrow}\sigma_k(\Omega,\Theta)$. Roughly speaking, the sets $(\Omega_\epsilon)$ are locally build as graphs of functions $f_\epsilon$ on $\partial\Omega$, with small $L^\infty$ norm, and such that $\sqrt{1+|f_\epsilon'|^2}\approx \Theta$.
\end{itemize}

This second part is the most technical one.  We start by introducing a notion of convergence of domain that is strong enough for our purpose.

\begin{defn}
We say a sequence of (bounded) domains $(\Omega_{\epsilon})$ converges to a bounded domain $\Omega$ in uniformly if:
\begin{itemize}[label=\textbullet]
\item $\R^2 \setminus \Omega_\epsilon\goto\R^2 \setminus  \Omega$ in the    Hausdorff sense
\item $(\Omega_\epsilon)_{\epsilon>0}$ verify a uniform cone condition, meaning that there exists $\alpha\in ]0,\pi/2[$ and $\delta>0$ such that for any $\epsilon>0$  and any $x\in \partial \Omega_\epsilon$, there exists $\zeta_{x,\epsilon}$ a unitary vector such that for all $y\in \overline{\Omega_\epsilon}\cap B(x,\epsilon)$:
\[\lbrace z\in B(y,\epsilon)\text{ with }\langle z-y,\zeta_{x,\epsilon}\rangle \geq \cos(\delta)|z-y|\rbrace\subset\overline{\Omega_\epsilon}.\]
\end{itemize}
\end{defn}
In particular (see for instance \cite{HP18}), this convergence implies that:
\[\limsup_{\vps \ra 0} |\partial \Omega_\epsilon|<+\infty.\]
There exists a constant $M>0$ such that all $u$ in $H^1(\Omega)$ or $H^1(\Omega_\epsilon)$ can be extended to a function of $H^1(\R^2)$ with
\[\Vert \tilde{u}\Vert_{H^1(\R^2)}\leq M\Vert u\Vert_{H^1(\Omega^\epsilon)\text{ or }H^1(\Omega)}.\]
In addition, there exist a finite number of squares $(C_i)_{i=1,\hdots,K}$ centered in $p_i\in\R^2$ with radius $r_i$, that cover $\partial\Omega$ and all $\partial\Omega_\epsilon$, and such that for any $i$ there exist an orthonormal basis $(e_i,f_i)$ such that
\begin{align*}
C_i&=\lbrace p_i+te_i+sf_i,\ t,s\in [-r_i,r_i]  \rbrace,\\
\partial\Omega\cap C_i&=\lbrace p_i+te_i+g_i(t)f_i,\ t\in [-r_i,r_i]  \rbrace,\\
\partial\Omega_\epsilon\cap C_i&=\lbrace p_i+te_i+g_{i,\epsilon}(t)f_i,\ t\in [-r_i,r_i]  \rbrace,
\end{align*}
where the $(g_i)$ and $(g_{i,\epsilon})$ are uniformly Lipschitz functions. Moreover, the Hausdorff convergence of $(\Omega_\epsilon)$ gives:
\[\underset{1\leq i\leq K}{\sup}\Vert g_i-g_{i,\epsilon}\Vert_{L^\infty}\goto 0.\]

The following proposition extends a result from \cite{Bog17}.
\begin{prop}
Let $\Omega$ be a bounded Lipschitz domain and let $(\Omega_\epsilon)$ be a sequence of domains that converges to $\Omega$ uniformly, with a function $\Theta\in L^\infty(\partial\Omega)$ and a sequence $\Theta_\epsilon\in L^\infty(\partial\Omega_\epsilon)$ such that:
\[\limsup_{\epsilon\rightarrow 0}\Vert\Theta_\epsilon\Vert_{L^\infty(\partial\Omega_\epsilon)}<\infty.\]
We suppose that:
\[\Theta_\epsilon \Hl_{\lfloor \partial\Omega_\epsilon}\rau \Theta\Hl_{\lfloor \partial\Omega}\]
weakly-$*$ in the sense of measures. Let $(u_\epsilon)_\epsilon$ be a sequence of functions  in $H^1(\mathbb{R}^2)$ converging weakly to  $u$. Then
\[\int\limits_{\partial\Omega_\epsilon}\Theta_\epsilon u_\epsilon^2d\sigma\goto \int\limits_{\partial\Omega}\Theta u^2d\sigma.\]
\end{prop}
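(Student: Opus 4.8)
The plan is to reduce the convergence of the boundary integrals to the weak-$*$ convergence of measures by a density argument, isolating the genuinely delicate point, which is the convergence of the traces of $u_\epsilon$ on the moving boundaries $\partial\Om_\epsilon$. Fix $\eta>0$ and choose $\phi\in C_c^\infty(\R^2)$ with $\|u-\phi\|_{H^1(\R^2)}<\eta$. I would then split
$$\int_{\partial\Om_\epsilon}\Theta_\epsilon u_\epsilon^2\,d\sigma-\int_{\partial\Om}\Theta u^2\,d\sigma = A_\epsilon+B_\epsilon+C,$$
with $A_\epsilon=\int_{\partial\Om_\epsilon}\Theta_\epsilon(u_\epsilon^2-\phi^2)\,d\sigma$, $B_\epsilon=\int_{\partial\Om_\epsilon}\Theta_\epsilon\phi^2\,d\sigma-\int_{\partial\Om}\Theta\phi^2\,d\sigma$, and $C=\int_{\partial\Om}\Theta(\phi^2-u^2)\,d\sigma$. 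Since $\phi^2\in C_c(\R^2)$ is a fixed test function, the hypothesis $\Theta_\epsilon\Hl_{\lfloor\partial\Om_\epsilon}\rau\Theta\Hl_{\lfloor\partial\Om}$ gives $B_\epsilon\goto 0$ directly. For $C$, the trace theorem on the fixed Lipschitz domain $\Om$, together with $|\phi^2-u^2|=|\phi-u|\,|\phi+u|$ and Cauchy--Schwarz, yields $|C|\leq C\eta$.

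The term $A_\epsilon$ is the main point. Writing $u_\epsilon^2-\phi^2=(u_\epsilon-\phi)(u_\epsilon+\phi)$ and applying Cauchy--Schwarz on $\partial\Om_\epsilon$,
$$|A_\epsilon|\leq \|\Theta_\epsilon\|_{L^\infty(\partial\Om_\epsilon)}\,\|u_\epsilon-\phi\|_{L^2(\partial\Om_\epsilon)}\,\|u_\epsilon+\phi\|_{L^2(\partial\Om_\epsilon)}.$$
The factor $\|u_\epsilon+\phi\|_{L^2(\partial\Om_\epsilon)}$ will be bounded uniformly in $\epsilon$ by a uniform trace estimate, and $\|\Theta_\epsilon\|_{L^\infty}$ is bounded by hypothesis, so it suffices to show $\|u_\epsilon-\phi\|_{L^2(\partial\Om_\epsilon)}\leq C\eta+o(1)$. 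I would split this further as $\|u_\epsilon-u\|_{L^2(\partial\Om_\epsilon)}+\|u-\phi\|_{L^2(\partial\Om_\epsilon)}$; the second term is at most $C\eta$ by the uniform trace estimate, so everything hinges on showing $\|u_\epsilon-u\|_{L^2(\partial\Om_\epsilon)}\goto 0$.

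The crux is a uniform trace inequality on the moving boundaries that converts interior $L^2$ control into boundary control. Using the coordinate charts $C_i$ in which $\partial\Om_\epsilon$ is the graph of a uniformly Lipschitz function $g_{i,\epsilon}$, one writes, for $w\in C^\infty(\R^2)$ and $h>0$ small enough that a tube of height $h$ below each graph stays inside $C_i$,
$$w(t,g_{i,\epsilon}(t))^2\leq \frac{1}{h}\int_{g_{i,\epsilon}(t)-h}^{g_{i,\epsilon}(t)} w(t,s)^2\,ds + 2\int_{g_{i,\epsilon}(t)-h}^{g_{i,\epsilon}(t)}|w(t,s)|\,|\partial_s w(t,s)|\,ds,$$
obtained from the fundamental theorem of calculus in the vertical variable and averaging in $s$. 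Integrating in $t$, summing over the finitely many charts, and bounding the surface element by the uniform Lipschitz constant yields, for some fixed neighbourhood $N$ of $\partial\Om$ containing all $\partial\Om_\epsilon$ for small $\epsilon$,
$$\|w\|_{L^2(\partial\Om_\epsilon)}^2\leq C\,\|w\|_{L^2(N)}^2 + C\,\|w\|_{L^2(N)}\,\|w\|_{H^1(N)},$$
with $C$ independent of $\epsilon$; this is precisely the uniform trace estimate invoked above (applied to $w=u_\epsilon+\phi$ and to $w=u-\phi$). Applying it to $w=u_\epsilon-u$ and recalling that $u_\epsilon\rightharpoonup u$ in $H^1(\R^2)$ gives, by Rellich--Kondrachov, $\|u_\epsilon-u\|_{L^2(N)}\goto 0$ while $\|u_\epsilon-u\|_{H^1(N)}$ stays bounded, whence $\|u_\epsilon-u\|_{L^2(\partial\Om_\epsilon)}\goto 0$. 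Combining the three contributions gives $\limsup_{\epsilon\to 0}\big|\int_{\partial\Om_\epsilon}\Theta_\epsilon u_\epsilon^2\,d\sigma-\int_{\partial\Om}\Theta u^2\,d\sigma\big|\leq C\eta$ for every $\eta>0$, which proves the claim. I expect the main obstacle to be the careful bookkeeping behind the uniform trace estimate across overlapping charts: checking that the uniform cone condition genuinely furnishes a single height $h$ and a single neighbourhood $N$ valid for all $\epsilon$, so that the constant $C$ is truly $\epsilon$-independent.
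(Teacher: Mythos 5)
Your proof is correct, but it takes a genuinely different route from the paper's. The paper works chart by chart with a partition of unity and splits the difference into three terms: (i) the vertical shift of $u_\epsilon$ between the two graphs $g_{i,\epsilon}$ and $g_i$, controlled by the fundamental theorem of calculus plus Cauchy--Schwarz, which yields a bound by $\Vert g_{i,\epsilon}-g_i\Vert_{L^\infty}^{1/2}\Vert \nabla u_\epsilon\Vert_{L^2(\R^2)}$ and thus uses the Hausdorff convergence of the graphs quantitatively; (ii) strong $L^2(\partial\Omega)$ convergence of the traces on the \emph{fixed} boundary, via the compact embedding $H^{\frac{1}{2}}(\partial\Omega)\hookrightarrow L^2(\partial\Omega)$; (iii) the weak-$*$ hypothesis tested against a continuous $L^1$-approximation of the fixed trace $v(t,g_i(t))^2$, using the uniform $L^\infty$ bound on the Jacobian densities $J_\epsilon$. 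You instead approximate $u$ by a smooth $\phi$ in $H^1(\R^2)$, so that the weak-$*$ hypothesis is applied directly to the continuous test function $\phi^2$ (no approximation of the trace, no explicit densities $J_\epsilon$), and you reduce the moving-boundary term to the collective trace compactness $\Vert u_\epsilon-u\Vert_{L^2(\partial\Omega_\epsilon)}\to 0$, obtained from a uniform trace inequality in tubes below the graphs combined with Rellich in a fixed bounded neighbourhood $N$. The same two elementary ingredients (vertical FTC with Cauchy--Schwarz, and a compactness step) appear in both proofs but are deployed differently: notably, your trace-compactness step never uses $\Vert g_{i,\epsilon}-g_i\Vert_{L^\infty}\to 0$, only the uniform Lipschitz bounds coming from the cone condition, so your argument is somewhat more modular and general (it applies to any uniformly Lipschitz family of graphs in fixed charts satisfying the weak-$*$ hypothesis), at the price of an extra $H^1$-density step; the paper's argument is more explicit and quantifies the shift between the two boundaries directly. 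The bookkeeping you flag at the end as the main obstacle is in fact already supplied by the paper's notion of uniform convergence: it guarantees a single finite family of squares $C_i$, with a common axis in each, in which $\partial\Omega$ and all the $\partial\Omega_\epsilon$ are graphs of uniformly Lipschitz functions, so a single tube height $h$ and neighbourhood $N$ work for all small $\epsilon$ and your constant $C$ is indeed $\epsilon$-independent; your proof is complete.
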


\begin{proof}

We fix a $i\in\lbrace 1,\hdots,K\rbrace$, and we let $v(t,s)=u(p_i+te_i+sf_i)$ and $v_\epsilon(t,s)=u_\epsilon(p_i+te_i+sf_i)$. We also take $(\psi_i)_i$ a partition of unity associated to the covering $(C_i)$, and 
\begin{align*}
J(t)&=\psi_i(t,g_i(t))\sqrt{1+|g_i'(t)|^2}\Theta(t,g_i(t)),\\
J_\epsilon(t)&=\psi_i(t,g_i(t))\sqrt{1+|g_{i,\epsilon}'(t)|^2}\Theta_\epsilon(t,g_{i,\epsilon}(t)).
\end{align*}
Denoting $I=[-r_i,r_i]$, we only need to show that
\[\int_I v_\epsilon(t,g_{i,\epsilon}(t))^2J_\epsilon(t)dt\goto\int_I v(t,g_i(t))^2J(t)dt.\]
We split the difference in three and use the bound $|J|,|J_\epsilon|\leq M$ for a certain $M>0$ that only depends on the Lipschitz constant of the $(g_{i,\epsilon})$, on $\Vert\Theta\Vert_{L^\infty(\partial\Omega)}$ and on $\sup_{\epsilon}\Vert \Theta_\epsilon\Vert_{L^\infty(\partial\Omega_\epsilon)}$:
\begin{align*}
\left|\int_I ( v_\epsilon(t,g_{i,\epsilon}(t))^2 J_\epsilon(t)-v(t,g_{i}(t))^2 J(t))dt\right|&\leq M\int_I |v_\epsilon(t,g_{i,\epsilon}(t))^2-v_\epsilon(t,g_{i}(t))^2| dt\\
&+M\int_I |v_\epsilon(t,g_{i}(t))^2-v(t,g_{i}(t))^2|dt\\
&+|\int_I v(t,g_{i}(t))^2 (J_\epsilon(t) -J(t))dt|.
\end{align*}
\begin{itemize}[label=\textbullet]
\item \textbf{First term}: We first show that $v_\epsilon(t,g_{i,\epsilon}(t))-v_\epsilon(t,g_{i}(t))\goto 0$ in $L^2(I)$. Since $v_\epsilon\in H^1( \mathbb{R}^2)$, we know by Fubini theorem  that $s\mapsto v_\epsilon(t,s)$ is in $H^1(\mathbb{R})$ for almost every $t\in I$, with derivative $\partial_s v$; thus for almost every $t\in I$, we may write: 
\begin{align*}
|v_\epsilon(t,g_{i,\epsilon}(t))-v_\epsilon(t,g_{i}(t))|&\leq \int\limits_{[g_{i,\epsilon}(t),g_i(t)]}|\partial_s v_\epsilon(t,s)|ds\\
&\leq |g_{i,\epsilon}(t)-g_i(t)|^{1/2}\left( \int\limits_{[g_{i,\epsilon}(t),g_i(t)]}|\nabla v_\epsilon(t,s)|^2 ds\right)^{1/2}.
\end{align*}
We integrate for $t\in I$:
\[\int_I |v_\epsilon(t,g_{i,\epsilon}(t))-v_\epsilon(t,g_{i}(t))|^2 dt \leq \Vert g_i-g_{i,\epsilon}\Vert_{L^\infty(I)}\Vert \nabla u_\epsilon\Vert^2_{L^2(\R^2)}\goto 0.\]
Where we used the fact that $\Vert \nabla v_\epsilon\Vert^2_{L^2(\R^2)}=\Vert \nabla u_\epsilon\Vert^2_{L^2(\R^2)}$ in the last factor. Now, we write:
\begin{align*}
v_\epsilon(t,g_{i,\epsilon}(t))^2-v_\epsilon(t,g_{i}(t))^2=&\Big[v_\epsilon(t,g_{i,\epsilon}(t))-v_\epsilon(t,g_{i}(t))\Big]^2\\
&+2v_\epsilon(t,g_{i}(t))\Big[v_\epsilon(t,g_{i,\epsilon}(t))-v_\epsilon(t,g_{i}(t))\Big].
\end{align*}
And we just use the fact that since $(u_\epsilon)$ is bounded in $H^1(\R^2)$, then it is bounded in $L^2(\partial\Omega)$, so $(v_\epsilon(t,g_i(t))$ is bounded in $L^2(I)$ by a constant $C>0$. We obtain:
\begin{align*}
\int_I |v_\epsilon(t,g_{i,\epsilon}(t))^2-v_\epsilon(t,g_{i}(t))^2| dt&\leq \Vert \nabla u_\epsilon \Vert_{L^2(\mathbb{R}^2)}^2\Vert g_{i,\epsilon}-g_i\Vert_{L^\infty(I)}+2C\Vert \nabla u_\epsilon \Vert_{L^2(\mathbb{R}^2)}\Vert g_{i,\epsilon}-g_i\Vert_{L^\infty(I)}^\frac{1}{2}\\
&\goto 0.
\end{align*}
\item \textbf{Second term}: $u_\epsilon$ converges weakly to $u$ in $H^1(\R^2)$, and so it converges weakly to $u$ in $H^{\frac{1}{2}}(\partial\Omega)$. Since $H^{\frac{1}{2}}(\Omega)\hookrightarrow L^2(\partial\Omega)$ is compact, this means that $u_\epsilon$ converges in the $L^2(\partial\Omega)$ sense to $u$. This implies that $v_\epsilon(t,g_i(t))$ converges to $v(t,g_i(t))$ in $L^2(I)$, which gives that the second term goes to 0.
\item \textbf{Third term}: We use here the hypothesis that $\Theta_\epsilon\Hl_{\lfloor \partial\Omega_\epsilon}\rau  \Theta\Hl_{\lfloor \partial\Omega}$ weakly-$*$ in the sense of measures. This gives that for any continuous function $\varphi\in\C^0(I)$, we have:
\[\int_I \varphi(t)(J_\epsilon(t)-J(t))dt\goto 0.\]
$v(t,g_i(t))^2\in L^1(I)$, so we can approximate it with a continuous function $\varphi$ in the $L^1$ sense. Then:
\begin{align*}
|\int_I v^2 (J_\epsilon-J)dt-\int_I \varphi(J_\epsilon-J)dt|&\leq 2M \int_I|v^2-\varphi|dt.
\end{align*}
Where $M$ is the $L^\infty(I)$ bound of $J_\epsilon$ and $J$. This gives the result.
\end{itemize}
\end{proof}

We are now in position to prove the following homogeneization result.

\begin{prop}
Let $\Omega$ be a bounded Lipschitz domain and let $(\Omega_\epsilon)$ be a sequence of domains that converges to $\Omega$ uniformly, with a function $\Theta\in L^\infty(\partial\Omega)$ and a sequence $\Theta_\epsilon\in L^\infty(\partial\Omega_\epsilon)$ such that:
\[\limsup_{\epsilon\rightarrow 0}\Vert\Theta_\epsilon\Vert_{L^\infty(\partial\Omega_\epsilon)}<\infty.\]
We suppose that:
\[\Theta_\epsilon \Hl_{\lfloor \partial\Omega_\epsilon}\rau \Theta\Hl_{\lfloor \partial\Omega}.\]
weak-$*$ in the measure sense. Then for all $k\geq 1$
\[\sigma_k(\Omega_\epsilon,\Theta_\epsilon)\goto \sigma_k(\Omega,\Theta).\]
\end{prop}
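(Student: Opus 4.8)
The plan is to establish the two one-sided inequalities $\limsup_{\epsilon\to 0}\sigma_k(\Omega_\epsilon,\Theta_\epsilon)\le \sigma_k(\Omega,\Theta)$ and $\liminf_{\epsilon\to 0}\sigma_k(\Omega_\epsilon,\Theta_\epsilon)\ge\sigma_k(\Omega,\Theta)$ separately, both through the min--max formula, with the previous proposition as the main analytic input. Throughout I would use the uniform extension operator and the uniform cone condition, which together furnish a uniform Poincar\'e--Wirtinger inequality and a uniform trace inequality for the whole family $(\Omega_\epsilon)$, as well as the convergence $\mathbf 1_{\Omega_\epsilon}\to\mathbf 1_\Omega$ in $L^1$ and $m_\epsilon:=\int_{\partial\Omega_\epsilon}\Theta_\epsilon\,d\sigma\to m:=\int_{\partial\Omega}\Theta\,d\sigma>0$ (the latter by testing the weak-$*$ convergence against a cut-off equal to $1$ near $\overline\Omega$).

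For the upper bound I would freeze the limit problem: let $u_0,\dots,u_k$ be eigenfunctions of $(\Omega,\Theta)$, orthonormal in $L^2(\partial\Omega,\Theta\,d\sigma)$, and extend them to fixed functions of $H^1(\R^2)$ spanning a $(k+1)$-dimensional space $V$; restricting $V$ to $\Omega_\epsilon$ gives admissible test subspaces. For each fixed $v\in V$ the constant sequence $v_\epsilon\equiv v$ converges weakly, so the previous proposition gives $\int_{\partial\Omega_\epsilon}\Theta_\epsilon v^2\to\int_{\partial\Omega}\Theta v^2$, while $\int_{\Omega_\epsilon}|\nabla v|^2\to\int_\Omega|\nabla v|^2$ by dominated convergence. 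Since $V$ is finite-dimensional these are convergences of quadratic forms, hence uniform on the unit sphere of $V$; as the limiting boundary form is positive definite on $V$, the supremum of the Rayleigh quotient over $V$ computed on $\Omega_\epsilon$ converges to $\sigma_k(\Omega,\Theta)$. Feeding this into the min--max yields the upper bound (and en route shows the dimension does not drop for small $\epsilon$).

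For the lower bound I would take, for each $\epsilon$, eigenfunctions $u_0^\epsilon,\dots,u_k^\epsilon$ of $(\Omega_\epsilon,\Theta_\epsilon)$ normalized to be orthonormal in $L^2(\partial\Omega_\epsilon,\Theta_\epsilon\,d\sigma)$, so that $\int_{\Omega_\epsilon}|\nabla u_i^\epsilon|^2=\sigma_i(\Omega_\epsilon,\Theta_\epsilon)\le\sigma_k(\Omega_\epsilon,\Theta_\epsilon)$ is bounded by the upper bound just proved. Extending these and extracting weakly convergent subsequences $u_i^\epsilon\rightharpoonup u_i$ in $H^1(\R^2)$, the previous proposition (applied to $u_i^\epsilon\pm u_j^\epsilon$ and polarizing) gives $\int_{\partial\Omega}\Theta u_iu_j=\delta_{ij}$, so the limits span a genuine $(k+1)$-dimensional subspace $U$ of $H^1(\Omega)$. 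For $v=\sum c_iu_i\in U$ and $v_\epsilon=\sum c_iu_i^\epsilon\rightharpoonup v$, weak lower semicontinuity of the Dirichlet energy on each $\Omega'\Subset\Omega$ (contained in $\Omega_\epsilon$ for small $\epsilon$) gives $\int_\Omega|\nabla v|^2\le\liminf\int_{\Omega_\epsilon}|\nabla v_\epsilon|^2$, while optimality of the subspace gives $\int_{\Omega_\epsilon}|\nabla v_\epsilon|^2\le\sigma_k(\Omega_\epsilon,\Theta_\epsilon)\int_{\partial\Omega_\epsilon}\Theta_\epsilon v_\epsilon^2$, and the previous proposition gives $\int_{\partial\Omega_\epsilon}\Theta_\epsilon v_\epsilon^2\to\int_{\partial\Omega}\Theta v^2$. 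Combining, $\int_\Omega|\nabla v|^2\le(\liminf\sigma_k(\Omega_\epsilon,\Theta_\epsilon))\int_{\partial\Omega}\Theta v^2$ for all $v\in U$, and the min--max characterization yields $\sigma_k(\Omega,\Theta)\le\liminf\sigma_k(\Omega_\epsilon,\Theta_\epsilon)$. As every subsequence has a further subsequence with limit $\sigma_k(\Omega,\Theta)$, the full sequence converges.

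The main obstacle is the uniform $H^1(\R^2)$ bound on the extended eigenfunctions in the lower-bound step: the gradients are controlled, but the normalization only pins down a \emph{weighted} boundary norm, so one must still bound $\|u_i^\epsilon\|_{L^2(\Omega_\epsilon)}$. I would resolve this by splitting $u_i^\epsilon=\overline{u_i^\epsilon}+(u_i^\epsilon-\overline{u_i^\epsilon})$ into its mean over $\Omega_\epsilon$ plus a remainder controlled in $H^1(\Omega_\epsilon)$, hence in $L^2(\partial\Omega_\epsilon)$, by the uniform Poincar\'e--Wirtinger and trace inequalities; inserting this into $\int_{\partial\Omega_\epsilon}\Theta_\epsilon(u_i^\epsilon)^2=1$ and using $m_\epsilon\to m>0$ produces a quadratic inequality that bounds the mean $\overline{u_i^\epsilon}$, and hence the full $L^2(\Omega_\epsilon)$ norm.
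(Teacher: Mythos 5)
Your proof is correct, and its overall architecture is the same as the paper's: two one-sided semicontinuity estimates via the min--max formula, with the trace-convergence proposition ($\int_{\partial\Omega_\epsilon}\Theta_\epsilon u_\epsilon^2\to\int_{\partial\Omega}\Theta u^2$ along weakly convergent $H^1(\R^2)$ sequences) as the analytic engine, weak limits of an adapted eigenbasis for the lower bound, and a fixed extended test space from the limit problem for the upper bound. You deviate in three places, each defensibly. First, you prove the upper bound \emph{before} the lower bound and use it as the a priori bound on $\sigma_k(\Omega_\epsilon,\Theta_\epsilon)$; the paper instead invokes the Hersch--Payne--Schiffer inequality to bound $|\partial\Omega_\epsilon|\sigma_k(\Omega_\epsilon)$. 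Since the proposition is stated for arbitrary bounded Lipschitz domains with weights, while Hersch--Payne--Schiffer is a statement about (unweighted) simply connected planar domains, your bootstrap is actually more self-contained and more general in scope. Second, you explicitly close a step the paper leaves implicit: the normalization $\int_{\partial\Omega_\epsilon}\Theta_\epsilon (u_i^\epsilon)^2=1$ together with the Dirichlet bound controls only gradients and a weighted boundary norm, not $\Vert u_i^\epsilon\Vert_{L^2(\Omega_\epsilon)}$, so a genuine argument is needed to get the uniform $H^1(\R^2)$ bound after extension; your mean-plus-remainder splitting with the uniform Poincar\'e--Wirtinger and trace inequalities and $m_\epsilon\to m>0$ (which does require $\Theta\not\equiv 0$, guaranteed by the paper's standing hypothesis on weights) is exactly the right repair. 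Third, in the upper bound you argue by uniform convergence of the two quadratic forms on the unit sphere of the fixed finite-dimensional space $V$, where the paper extracts convergent maximizing coefficient vectors $a_\epsilon\in\mathbb{S}^k$; these are equivalent, and both need (and you both note) that the restricted space keeps dimension $k+1$ for small $\epsilon$. Your use of lower semicontinuity on compactly contained subdomains $\Omega'\Subset\Omega$, exhausting $\Omega$, is also legitimate since Hausdorff convergence of the complements forces $\Omega'\subset\Omega_\epsilon$ eventually; the paper states the analogous inequality directly.
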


\begin{proof}
We prove the result in two steps:

\noindent{\bf Lower semicontinuity.} For all $\epsilon>0$, let $U_\epsilon\in\mathcal{U}_k(\Omega_\epsilon)$ be a subspace that attains $\sigma_k(\Omega_\epsilon)$, and let $(u_{p,\epsilon})_{p=0,\hdots,k}$ be an adapted basis of it (that is, a basis that is orthonormal relative to the quadratic form $\int_{\partial\Omega_\epsilon}\Theta_\epsilon u^2$ and orthogonal relative to $\int_{\Omega_\epsilon}|\nabla u|^2$).\bigbreak

Since $|\partial\Omega_\epsilon|$ converges, and $|\partial\Omega_\epsilon|\sigma_k(\Omega_\epsilon)$ is bounded according to (Hersch-Payne-Schiffer inequality \cite{HPS75}), we know that the $(u_{p,\epsilon})$ are bounded in $H^1(\R^2)$. Up to an extraction we suppose they converge weakly to some $(u_p)_{p=0,\hdots,k}\in H^1(\R^2)$, in particular we have the following semi-continuity inequality; for all $(a_0,\hdots,a_k)$ with $\sum_p a_p^2=1$:
\[\int\limits_{\Omega}|\nabla \sum_{p}a_p u_p|^2dx\leq\liminf_{\epsilon\rightarrow 0}\int\limits_{\Omega}|\nabla \sum_{p}a_p u_{p,\epsilon}|^2dx\leq \liminf_{\epsilon\rightarrow 0}\sigma_k(\Omega_\epsilon,\Theta_\epsilon ).\]
Moreover, according to the previous theorem, for all $p,p'$, we have:
\[\delta_{p,p'}=\int\limits_{\partial \Omega_\epsilon}\Theta_\epsilon u_{p,\epsilon}u_{p',\epsilon}d\sigma\goto \int\limits_{\partial \Omega}\Theta u_{p}u_{p'} d\sigma.\]
This shows in particular that $(u_p)$ is orthonormal for this scalar product. Thus $\text{Span}(u_0,\hdots,u_p)$ is in $\mathcal{U}_k(\Omega)$ and:
\[\sigma_k(\Omega,\Theta)\leq \underset{\sum_{p}a_p^2=1}{\sup}\ \int\limits_{\Omega}|\nabla\sum_{p}a_p u_p|^2dx \leq \liminf_{\epsilon\rightarrow 0}\sigma_k(\Omega_\epsilon,\Theta_\epsilon ).\]

\noindent{\bf Upper semicontinuity.} Let $V\in\mathcal{U}_k(\Omega)$ be a subspace that attains $\sigma_k(\Omega,\Theta)$, and let $(v_0,\hdots,v_p)$ be an adapted basis of it. When we extend it, $(v_0,\hdots,v_k)$ is a linearly independant family of $H^1(\Omega_\epsilon)$ for small enough $\epsilon$. For all $\epsilon$, let $a_\epsilon=(a_{p,\epsilon})_{p=0,\hdots,k}\in\mathbb{S}^k$ be chosen such that
\[\underset{w\in \text{Span}(v_0,\hdots,v_k)}{\sup}\ \frac{\ds \int\limits_{\Omega_\epsilon}|\nabla w|^2dx}{\ds \int\limits_{\partial\Omega_\epsilon}\Theta_\epsilon w^2d\sigma}=\frac{\ds \int\limits_{\Omega_\epsilon}|\nabla \sum_p a_{p,\epsilon} v_p|^2dx }{\ds \int\limits_{\partial\Omega_\epsilon}\Theta_\epsilon |\sum_p a_{p,\epsilon} v_p|^2d\sigma}.\]
We can suppose each $a_{p,\epsilon}$ converge to a certain $a_p$. Using the previous theorem, we know that:
\[\int\limits_{\partial\Omega_\epsilon}\Theta_\epsilon |\sum_p a_{p,\epsilon} v_p|^2d\sigma\goto \int\limits_{\partial\Omega}\Theta|\sum_p a_{p} v_p|^2d\sigma.\]
And the fact that $\Omega_\epsilon\goto\Omega$ gives:
\[\int\limits_{\Omega_\epsilon}|\nabla \sum_p a_{p,\epsilon} v_p|^2dx\goto \int\limits_{\Omega}|\nabla \sum_p a_{p} v_p|^2dx.\]
Using the two previous limits:
\[\limsup_{\epsilon\rightarrow 0}\sigma_k(\Omega_\epsilon,\Theta_\epsilon )\leq \frac{\ds \int\limits_{\Omega}|\nabla \sum_p a_{p} v_p|^2dx}{\ds \int\limits_{\partial\Omega}\Theta|\sum_p a_{p} v_p|^2d\sigma}\leq\sigma_k(\Omega,\Theta).\]
\end{proof}

\begin{lem}
Let $\Omega,\omega$ be two $\C^{1,\alpha}$ domains of $\CC$ and $g:\Omega\rightarrow \omega$ a conformal map. Let $\Theta\in L^\infty(\partial\omega)$ be a positive function. Then 

\[\sigma_k\left(\omega,\Theta\right)=\sigma_k\left(\Omega,g^*\Theta\right),\]
where $g^*\Theta=|g'|\Theta\circ g$.
\end{lem}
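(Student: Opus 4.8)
The plan is to realize the equality of spectra as coming from a single explicit correspondence between the two variational problems, namely the pullback $Tu := u\circ g$, and to check that this correspondence preserves the numerator of the Rayleigh quotient while converting the $\Theta$-weighted boundary form on $\omega$ into the $g^*\Theta$-weighted one on $\Omega$. Before doing any computation I would first settle the boundary behaviour of $g$, which is the only genuinely analytic ingredient. Since $\Omega$ and $\omega$ are $\C^{1,\alpha}$, the Kellogg--Warschawski theorem guarantees that $g$ extends to a homeomorphism $\ov\Omega\to\ov\omega$ whose derivative $g'$ extends $\alpha$-Hölder continuously up to $\partial\Omega$, with $|g'|$ bounded above and bounded away from $0$. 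In particular $g$ restricts to a $\C^{1,\alpha}$ diffeomorphism $\partial\Omega\to\partial\omega$, so $g^*\Theta=|g'|\,\Theta\circ g$ is a well-defined, positive element of $L^\infty(\partial\Omega)$ and $\sigma_k(\Omega,g^*\Theta)$ is meaningful.

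Next I would observe that $T$ is a linear isomorphism $H^1(\omega)\to H^1(\Omega)$, because $g$ is a bi-Lipschitz diffeomorphism of the closures; hence $T$ maps the family $\mathcal{U}_k(\omega)$ of $(k+1)$-dimensional subspaces of $H^1(\omega)$ bijectively onto $\mathcal{U}_k(\Omega)$. The core of the argument is then the transformation of the two quadratic forms, where two-dimensional conformality is decisive. Writing $g$ as holomorphic, its real Jacobian is $|g'|$ times a rotation, so pointwise $|\nabla(u\circ g)|^2 = (|\nabla u|^2\circ g)\,|g'|^2$, while the area element transforms by $dy=|g'|^2\,dx$ under $y=g(x)$; these two factors cancel under the change of variables, giving $\int_\Omega |\nabla(Tu)|^2\,dx = \int_\omega |\nabla u|^2\,dy$. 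For the boundary, the conformal stretching is isotropic, so the arc-length element satisfies $d\sigma_{\partial\omega}=|g'|\,d\sigma_{\partial\Omega}$, whence
\[
\int_{\partial\omega}\Theta\,u^2\,d\sigma=\int_{\partial\Omega}(\Theta\circ g)(u\circ g)^2\,|g'|\,d\sigma=\int_{\partial\Omega}(g^*\Theta)(Tu)^2\,d\sigma.
\]

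Combining the two identities shows that for every $u\in H^1(\omega)$ the Rayleigh quotient for $(\omega,\Theta)$ at $u$ equals the Rayleigh quotient for $(\Omega,g^*\Theta)$ at $Tu$. Taking the supremum over a subspace $U\in\mathcal{U}_k(\omega)$ and then the infimum over $\mathcal{U}_k(\omega)$, and using that $T$ is a bijection onto $\mathcal{U}_k(\Omega)$, yields $\sigma_k(\omega,\Theta)=\sigma_k(\Omega,g^*\Theta)$, as desired. I expect the main obstacle to be precisely the boundary analysis: both the identity $d\sigma_{\partial\omega}=|g'|\,d\sigma_{\partial\Omega}$ and the membership $g^*\Theta\in L^\infty(\partial\Omega)$ require $g'$ to extend continuously and nondegenerately to $\partial\Omega$, which is exactly where the $\C^{1,\alpha}$ hypothesis (via Kellogg--Warschawski) is used; the remaining cancellations are the elementary algebra special to conformal maps in dimension two.
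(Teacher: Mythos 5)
Your proposal is correct and takes essentially the same route as the paper: both invoke the Kellogg--Warschawski theorem to get that $g$ and $g^{-1}$ are $\C^{1,\alpha}$ up to the boundary, and then transfer the Rayleigh quotient through composition (the paper works with $v=u\circ g^{-1}$, you with $Tu=u\circ g$, which is the same bijection between $\mathcal{U}_k(\Omega)$ and $\mathcal{U}_k(\omega)$ run in the opposite direction). The only difference is presentational: you write out the pointwise identities $|\nabla(u\circ g)|^2=(|\nabla u|^2\circ g)\,|g'|^2$, $dy=|g'|^2\,dx$ and $d\sigma_{\partial\omega}=|g'|\,d\sigma_{\partial\Omega}$ that the paper compresses into ``a change of variable gives.''
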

\begin{proof}
Since $\Omega$ and $\omega$ are both $\C^{1,\alpha}$, using the Kellogg-Warschawski theorem (see Theorem 3.6 of \cite{Pom92}) we know that $g$ and its inverse are $\C^{1,\alpha}$ up to the boundary. For any function $u\in H^1(\Omega)$, we let $v=u\circ g^{-1}$. Then a change of variable gives:
\[\int\limits_{\omega}|\nabla v|^2dx =\int\limits_{\Omega}|\nabla u|^2dx\]
and
\[\int\limits_{\partial \omega}\Theta v^2d\sigma=\int\limits_{\partial \Omega}(g^*\Theta) u^2d\sigma.\]
Moreover, if $U$ is in $\mathcal{U}_k(\Omega)$, then the subspace $V=\lbrace u\circ g^{-1},u\in U\rbrace$ is in $\mathcal{U}_k(\omega)$ and the Rayleigh quotient are the same on both space: we have a bijection between $\mathcal{U}_k(\Omega)$ and $\mathcal{U}_k(\omega)$ that preserves the Rayleigh quotient, which proves the result.
\end{proof}

\begin{lem}
Let $\Omega$ be a Lipschitz domain, $\Theta\in L^\infty(\partial\Omega,]1,+\infty[)$. Then there exists a sequence of domains $\Omega_\epsilon$ that converges uniformly to $\Omega$ and such that:
\[\Hl_{\lfloor \partial\Omega_\epsilon}\rau \Theta\Hl_{\lfloor \partial\Omega}\]
weakly-$*$ in the sense of measures.
\end{lem}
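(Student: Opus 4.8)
The plan is to build $\Omega_\epsilon$ by replacing, in each local graph chart of the construction recalled above, the boundary graph by a high-frequency, small-amplitude \emph{sawtooth} whose slope is tuned so that the length it adds matches the prescribed weight $\Theta$. The guiding principle is that one can add a prescribed amount of length to a segment while keeping its $L^\infty$ displacement arbitrarily small, simply by oscillating with a fixed slope but a vanishing period.

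\textbf{Local model.} Work in one chart $C_i$, so that $\partial\Omega$ is the graph $s=g(t)$, $t\in I=[-r_i,r_i]$, and the pushforward to the $t$-axis of $\Hl_{\lfloor \partial\Omega\cap C_i}$ is $\sqrt{1+|g'(t)|^2}\,dt$; the target density on the $t$-axis is then $\Theta(t,g(t))\sqrt{1+|g'(t)|^2}\,dt$. Given a slope $p$ and a target factor $\Theta>1$, set $Q=\sqrt{\Theta^2(1+p^2)-1}$, which is real precisely because $\Theta>1$, and which satisfies $Q\geq |p|$ since $\Theta\geq 1$. A piecewise linear curve whose slope alternates between $+Q$ and $-Q$, in proportions $a:(1-a)$ with $2a-1=p/Q$, has mean slope $p$ and constant arc-length density $\sqrt{1+Q^2}=\Theta\sqrt{1+p^2}$, \emph{independently of the tooth width}. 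Thus, by choosing the teeth arbitrarily fine, we reproduce the target density on average while the oscillation amplitude, of order $Q\times(\text{tooth width})$, tends to $0$.

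\textbf{Assembling the chart.} Partition $I$ into subintervals $I_k$ of mesh at most $\epsilon$; on $I_k$ let $p_k$ be the secant slope of $g$, $\Theta_k$ the average of $\Theta$, and $Q_k=\sqrt{\Theta_k^2(1+p_k^2)-1}$, and define $g_\epsilon$ to be the $\pm Q_k$ sawtooth of mean slope $p_k$ with $\lceil 1/\epsilon\rceil$ teeth, pinned to $g$ at the subdivision nodes. Then $\Vert g_\epsilon-g\Vert_{L^\infty(I)}\to 0$, while $\sqrt{1+|g_\epsilon'|^2}\,dt$ has mass $\Theta_k\sqrt{1+p_k^2}\,|I_k|$ on $I_k$. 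Testing against a continuous $\varphi$ and letting $\epsilon\to 0$, the Riemann-type sums converge: $\Theta_k\to\Theta$ at Lebesgue points, and the inscribed-polygon lengths $\sqrt{1+p_k^2}\,|I_k|$ add up to the arc length of $g$, so $\sqrt{1+|g_\epsilon'|^2}\,dt\rightharpoonup \Theta(\cdot,g)\sqrt{1+|g'|^2}\,dt$ weakly-$*$. For a merely $L^\infty$ weight one may first treat continuous $\Theta$, where every step is transparent, and then pass to the general case by a diagonal argument, the weak-$*$ topology being metrizable on measures carried by a fixed compact set.

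\textbf{Globalization and main difficulty.} Parametrize each component of $\partial\Omega$ and cut it into finitely many arcs, each contained in a single chart and meeting its neighbours only at subdivision nodes; performing the above perturbation arc by arc, in the vertical direction of the relevant chart and leaving the nodes fixed, produces a single closed curve $\partial\Omega_\epsilon$. Summing the local contributions against the partition of unity $(\psi_i)$ upgrades the chartwise statement to the global one, $\Hl_{\lfloor\partial\Omega_\epsilon}\rightharpoonup \Theta\,\Hl_{\lfloor\partial\Omega}$. The slopes of all $g_\epsilon$ are bounded by $\mathrm{Lip}(g)+\sqrt{\Vert\Theta\Vert_\infty^2(1+\mathrm{Lip}(g)^2)-1}$ uniformly in $\epsilon$, which yields a uniform cone condition and a uniform perimeter bound, while $\Vert g_\epsilon-g\Vert_{L^\infty}\to 0$ in every chart gives Hausdorff convergence of the complements; hence $\Omega_\epsilon\to\Omega$ uniformly. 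I expect the main obstacle to be exactly this globalization step: verifying that the perturbed curve stays embedded, so that $\Omega_\epsilon$ is a genuine domain, and that the uniform cone condition survives uniformly in $\epsilon$ despite the oscillations — this is where the vanishing amplitude and the uniform slope bound on the sawtooth must be exploited with care.
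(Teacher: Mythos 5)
Your core mechanism is exactly the paper's: a triangular oscillation of vanishing amplitude whose slope is tuned so that the length density gets multiplied by $\Theta$. Your $Q=\sqrt{\Theta^2(1+p^2)-1}$ is the chart-coordinate version of the paper's amplitude $\lambda=\sqrt{\Theta^2-1}$, which the paper applies in a unit-length parametrization of $\partial\Omega$ (i.e.\ the case $p=0$), and your final diagonal extraction via metrizability of the weak-$*$ topology is also the paper's closing step. Where the two proofs genuinely diverge is the globalization. The paper never patches charts: it first assumes $\Omega$ is $\C^2$ and $\Theta\in\C^1$, writes the oscillation \emph{globally} as a graph in the normal direction, $\partial\Omega_\epsilon=\lbrace x+f_\epsilon(x)\nu(x),\ x\in\partial\Omega\rbrace$ with $f_\epsilon=\lambda d_\epsilon\geq 0$, so that $\Omega_\epsilon$ is simply $\Omega$ together with the outward segments $[x,x+f_\epsilon(x)\nu(x)]$; embeddedness and the uniform cone condition are then immediate, with no junctions to control. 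The general Lipschitz domain with $L^\infty$ weight is reached afterwards by regularizing \emph{both} the domain and the weight and diagonalizing, whereas you regularize only the weight and attack the Lipschitz boundary directly in charts. Your route buys economy (no smoothing of the domain) and a very transparent Riemann-sum verification of the weak-$*$ limit, including the correct Jensen/polygonal-length bookkeeping for the secant slopes $p_k$.

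The price is the difficulty you yourself flag, and as written it is a real gap rather than a routine verification: your sawtooth oscillates on \emph{both} sides of the secant, and adjacent arcs are perturbed along different chart verticals $f_i\neq f_j$, so near a junction node the two perturbed arcs are graphs over different axes and nothing in your argument excludes their crossing; for the same reason, the cone condition at points of arc $i$ near a node could be obstructed by arc $j$ dipping inward, especially since your slope bound $Q$ blows up with $\Vert\Theta\Vert_{L^\infty}$. This is repairable in at least two ways: make the teeth one-signed, pointing into $\Omega^c$ (this is precisely what the paper's choice $d\geq 0$, $f_\epsilon\geq 0$ accomplishes, turning $\Omega_\epsilon$ into $\Omega$ plus disjoint attached teeth), or adopt the paper's two-step structure and oscillate in the normal direction over a $\C^2$-regularized boundary, where the tubular neighbourhood makes the perturbed curve automatically embedded for small $\epsilon$. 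Until one of these fixes is carried out, the embeddedness of $\partial\Omega_\epsilon$ and the uniformity of the cone condition near junctions remain asserted rather than proved, and they are the only missing ingredients in an otherwise correct argument.
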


\begin{proof}
 We start with the case where $\Omega$ is $\C^2$ and $\Theta\in\C^1(\partial\Omega)$.
We will construct the sequence $(\Omega_\epsilon)$ as a graph on $\partial\Omega$, by adding oscillations on the boundary of $\Omega$ whose amplitude is chosen such that the perimeter of $\Omega$ is locally multiplied by $\Theta$. This is the reason why we need $\Theta$ to be larger than 1.\bigbreak

Let us define the basic oscillation $d:\mathbb{R}\rightarrow\mathbb{R}$ by:
\[d:\begin{cases}x\in [2k,2k+1]\mapsto x-2k\\
x\in [2k+1,2k+2]\mapsto 2k+1-x.\end{cases}\]
Consider $c:\frac{\mathbb{R}}{|\partial\Omega|\mathbb{Z}}\rightarrow \partial\Omega$ a unit-length parametrization of $\partial\Omega$, we define:
\[d_\epsilon:\begin{cases}\partial\Omega\rightarrow \mathbb{R}\\ x\mapsto \epsilon \ d \circ c^{-1}\left(\frac{x}{\epsilon}\right)\end{cases}\]
Notice it is only well defined when $\epsilon=\frac{|\partial\Omega|}{k}$ for a certain $k\in\mathbb{N}^*$.\bigbreak

Finally, let $\lambda:\partial\Omega\rightarrow\mathbb{R}_+^*$ be a smooth function that will be made more precise later. We define our oscillations $(f_\epsilon)$ by:
\[f_\epsilon:\begin{cases}\partial\Omega\rightarrow \mathbb{R}_+\\
x\mapsto \lambda(x)d_\epsilon(x)\end{cases}\]
$f_\epsilon$ verifies $\Vert f_\epsilon\Vert_{L^\infty(\partial\Omega)}=\mathcal{O}\left(\epsilon\right)$ and $f_\epsilon'=\pm \lambda(x)+f_\epsilon(x)\lambda'(x)=\pm\lambda(x)+\mathcal{O}\left(\epsilon\right)$.\bigbreak

We consider the domain $\Omega_\epsilon$ defined with its boundary as
\[\partial\Omega_\epsilon=\lbrace x+f_\epsilon(x)\nu(x),\ x\in\partial\Omega\rbrace.\]
This means that $\Omega_\epsilon$ is defined for any small enough $\epsilon$ as the union of $\Omega$ and of the segments $[x,x+\nu(x)f_\epsilon(x)]$ for $x\in\partial\Omega$. It has a smooth by part boundary, and because it is defined as the graph of a Lipschitz function (where the Lipschitz constant does not depend on $\epsilon$) on the smooth set $\partial\Omega$, the sequence $(\Omega_\epsilon)$ verifies a uniform cone condition.\bigbreak

We now show that $\Hl_{\lfloor \partial\Omega_\epsilon}\rau \Theta\Hl_{\lfloor \partial\Omega}$ in weakly-$*$ in the sense of measures  for a certain choice of $\lambda$. Let $\varphi$ be any continuous function, then
\begin{align*}
\left(\mathcal{H}^1_{\lfloor \partial\Omega_\epsilon}\right)(\varphi)&=\int\limits_{\partial\Omega_\epsilon}\varphi(x)d\sigma\\
&=\int\limits_{\partial\Omega}\varphi(x+\nu(x)f_\epsilon(x))\left(\sqrt{1+\lambda^2}+\mathcal{O}(\epsilon)\right)d\sigma\\
&\goto (\sqrt{1+\lambda^2}\Hl_{\lfloor \partial\Omega})(\varphi),
\end{align*}
where the $\mathcal{O}(\epsilon)$ term contains not only the derivative of $\lambda$, but also the curvature of $\Omega$. By choosing $\lambda=\sqrt{\Theta^2-1}$, we get the result.\bigbreak

Assume  now that $\Omega$ is Lipschitz and $\Theta \in L^\infty(\partial\Omega)$.
 We proceed by regularization to use the previous result: there exists a sequence of $\C^2$ domains $(\Omega_\epsilon)_{\epsilon>0}$ that converges uniformly to $\Omega$, with a sequence $\Theta_\epsilon\in\C^1(\partial\Omega_\epsilon,]1,+\infty[$) that is uniformly bounded in $L^\infty$ and such that:

\[\Theta_\epsilon\Hl_{\lfloor \partial\Omega_\epsilon}\rau \Theta\Hl_{\lfloor \partial\Omega}.\]

For each $\epsilon$, we apply the previous result to get a sequence $\Omega_{\epsilon,\eta}\underset{\eta\rightarrow 0}{\longrightarrow}\Omega_\epsilon$ with:
\[\Hl_{\lfloor \partial\Omega_{\epsilon,\eta}}\underset{\eta\rightarrow 0}{\rau}\Theta_\epsilon\Hl_{\lfloor \partial\Omega_\epsilon}.\]
Using the fact that the space of Radon measures endowed with the weak-$*$ convergence is metrizable, by a diagonal extraction we can find a $\eta(\epsilon)>0$ for each $\epsilon$ such that:
\[\Hl_{\lfloor \partial\Omega_{\epsilon,\eta(\epsilon)}}\rau \Theta\Hl_{\lfloor \partial\Omega}.\]

\end{proof}

\begin{thm}
Let $\Omega$, $\omega$ be two $\C^{1,\alpha}$ domains and $g:\Omega\rightarrow \omega$ be a conformal map between the two. Then there exists a sequence $\Omega_\epsilon$ of domains homeomorphic to $\Omega$ such that $\Omega_\epsilon\goto \Omega$ uniformly and:

\[|\partial \Omega_\epsilon|\sigma_k\left(\Omega_\epsilon\right)\goto|\partial\omega|\sigma_k\left(\omega\right)\]
for any $k$. Moreover, $(|\partial\Omega_\epsilon|)_{\epsilon>0}$ is uniformly bounded.
\end{thm}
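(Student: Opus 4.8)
The plan is to assemble the three preceding results --- conformal invariance of the weighted spectrum, the homogenization proposition, and the oscillation construction lemma --- while absorbing the only genuine difficulty (that the natural weight need not exceed $1$) through a scaling observation. First I would transport the eigenvalue problem on $\omega$ to a weighted problem on $\Omega$. Applying the conformal invariance lemma with the weight $\Theta\equiv 1$ on $\partial\omega$ gives
\[\sigma_k(\omega)=\sigma_k(\omega,1)=\sigma_k(\Omega,|g'|),\]
while the conformal transformation of the arc-length element yields $|\partial\omega|=\int_{\partial\Omega}|g'|\,d\sigma$. Hence the target quantity reads
\[|\partial\omega|\,\sigma_k(\omega)=\Big(\int_{\partial\Omega}|g'|\,d\sigma\Big)\,\sigma_k(\Omega,|g'|).\]

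The key observation is that the right-hand side is invariant under multiplying the weight by a positive constant: for any $c>0$ one has $\sigma_k(\Omega,c\,|g'|)=c^{-1}\sigma_k(\Omega,|g'|)$ and $\int_{\partial\Omega}c|g'|\,d\sigma=c\int_{\partial\Omega}|g'|\,d\sigma$, so the product is unchanged. Since $g$ is conformal between $\C^{1,\alpha}$ domains, the Kellogg--Warschawski theorem (already invoked in the conformal lemma) guarantees that $|g'|$ is continuous and strictly positive on the compact $\partial\Omega$, hence bounded below by some $\delta>0$. Choosing $c>1/\delta$ I may set $\Theta:=c\,|g'|$, which satisfies $\Theta\in L^\infty(\partial\Omega,]1,+\infty[)$ and
\[\Big(\int_{\partial\Omega}\Theta\,d\sigma\Big)\,\sigma_k(\Omega,\Theta)=|\partial\omega|\,\sigma_k(\omega).\]

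Next I would invoke the construction lemma on $\Omega$ with this $\Theta$ to obtain a sequence $\Omega_\epsilon\to\Omega$ uniformly with $\Hl_{\lfloor \partial\Omega_\epsilon}\rau\Theta\,\Hl_{\lfloor \partial\Omega}$ weakly-$*$; since $\Omega_\epsilon$ is built as a graph over $\partial\Omega$ with small $L^\infty$ amplitude, it is homeomorphic to $\Omega$. The homogenization proposition, applied with the constant weights $\Theta_\epsilon\equiv 1$ (for which $\limsup\Vert\Theta_\epsilon\Vert_{L^\infty}<\infty$ trivially holds), then gives $\sigma_k(\Omega_\epsilon)=\sigma_k(\Omega_\epsilon,1)\goto\sigma_k(\Omega,\Theta)$ for every $k$. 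Testing the weak-$*$ convergence against a continuous function equal to $1$ on a fixed neighbourhood of $\Omega$ yields $|\partial\Omega_\epsilon|\goto\int_{\partial\Omega}\Theta\,d\sigma$, which simultaneously furnishes the uniform bound on $(|\partial\Omega_\epsilon|)$. Multiplying the two limits along the same sequence $\epsilon$ produces
\[|\partial\Omega_\epsilon|\,\sigma_k(\Omega_\epsilon)\goto\Big(\int_{\partial\Omega}\Theta\,d\sigma\Big)\,\sigma_k(\Omega,\Theta)=|\partial\omega|\,\sigma_k(\omega),\]
as required (the case $k=0$ being trivial since both sides vanish).

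The proof is essentially a bookkeeping assembly, so the only real subtlety is the mismatch between the natural weight $|g'|$, which may take values below $1$, and the hypothesis $\Theta>1$ demanded by the oscillation construction. The scale-invariance of the normalized product $\big(\int\Theta\big)\sigma_k(\Omega,\Theta)$ is precisely what resolves this, and it hinges on the lower bound $|g'|\geq\delta>0$, that is, on the $\C^{1,\alpha}$ boundary regularity. A secondary point to verify carefully is that the convergences of $\sigma_k(\Omega_\epsilon)$ and of $|\partial\Omega_\epsilon|$ occur along one and the same sequence, so that their product passes to the limit.
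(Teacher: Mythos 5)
Your proposal is correct and takes essentially the same route as the paper: both rescale the pulled-back weight to $\Theta=\Lambda|g'|$ with $\Lambda$ large enough that $\Theta>1$ (possible since $g'$ does not vanish up to the boundary), apply the oscillation construction and homogenization proposition to get $\sigma_k(\Omega_\epsilon)\goto\sigma_k(\Omega,\Theta)=\Lambda^{-1}\sigma_k(\omega)$, and conclude from $|\partial\Omega_\epsilon|\goto\int_{\partial\Omega}\Theta\,d\sigma=\Lambda|\partial\omega|$. Your explicit remarks on the scale invariance of the normalized product, the Kellogg--Warschawski lower bound on $|g'|$, and the single-sequence bookkeeping simply spell out steps the paper leaves implicit.
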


\begin{proof}
Let $\Theta(z)=\Lambda|g'(z)|$ for $z\in\partial\Omega$ and $\Lambda>0$ large enough to have $\Theta>1$ everywhere (this is possible because $g'$ does not vanish). Let $\Omega_\epsilon$ be a sequence that converges uniformly to $\Omega$ such that $\sigma_k(\Omega_\epsilon)\goto\sigma_k(\Omega,\Theta)$.\bigbreak
Now, we know:
\[\sigma_k(\Omega,\Theta)=\sigma_k(\omega,\Lambda)=\Lambda^{-1}\sigma_k(\omega).\]
And:
\[|\partial\Omega_\epsilon|\goto\int\limits_{\partial\Omega}\Theta d\sigma=\Lambda|\partial\omega|.\]
This shows the result.
\end{proof}

As a corollary of this result, we obtain the negative answer to the question of the stability of Weinstock's inequality:

\begin{thm}\label{Instab}
Let $\Omega$ be a simply connected domain of $\R^2$ with   $\C^{1,\alpha}$ boundary. There exists a sequence of simply connected domains $(\Omega_\epsilon)_{\epsilon}$ that converges to $\Omega$ uniformly such that:
\[|\partial\Omega_\epsilon|\sigma_1\left(\Omega_\epsilon\right)\goto2\pi\sigma_1(\D).\]
\end{thm}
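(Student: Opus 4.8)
The plan is to reduce everything to the homogenization theorem established just above, choosing as target domain $\omega$ the unit disk $\D$. Since $\Omega$ is a bounded simply connected domain with $\C^{1,\alpha}$ boundary, it is in particular a proper simply connected open subset of $\CC$, so the Riemann mapping theorem furnishes a conformal map $g\colon\Omega\rightarrow\D$. The disk has smooth, hence $\C^{1,\alpha}$, boundary, so both $\Omega$ and $\D$ qualify as $\C^{1,\alpha}$ domains in the required sense; moreover, by the Kellogg--Warschawski theorem already invoked in the excerpt, $g$ and $g^{-1}$ extend to $\C^{1,\alpha}$ maps up to the boundary, so the hypotheses of the preceding theorem are genuinely satisfied.

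I would then apply that theorem to the conformal pair $(\Omega,\D)$ with the map $g$. It produces a sequence $(\Omega_\epsilon)_\epsilon$ of domains homeomorphic to $\Omega$ --- hence simply connected --- converging uniformly to $\Omega$, with $(|\partial\Omega_\epsilon|)_\epsilon$ uniformly bounded, such that
\[|\partial\Omega_\epsilon|\,\sigma_k(\Omega_\epsilon)\goto|\partial\D|\,\sigma_k(\D)\]
for every $k$. Specializing to $k=1$ and using $|\partial\D|=2\pi$ gives
\[|\partial\Omega_\epsilon|\,\sigma_1(\Omega_\epsilon)\goto 2\pi\,\sigma_1(\D),\]
which is precisely the assertion; the uniform convergence $\Omega_\epsilon\to\Omega$ (in the sense of the definition above) is part of the conclusion of the preceding theorem.

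There is essentially no analytic work left at this stage: all the technical weight --- the weighted Steklov spectrum, the conformal invariance $\sigma_k(\omega,\Theta)=\sigma_k(\Omega,g^*\Theta)$, and above all the oscillating-boundary homogenization realizing a weight $\Theta\ge 1$ as a geometric perturbation of the boundary --- has already been carried out. The only point demanding care is the conformal setup, namely that a simply connected $\C^{1,\alpha}$ domain is conformally equivalent to the disk with enough boundary regularity to trigger the homogenization theorem; this is exactly the Riemann mapping theorem combined with boundary regularity. I note finally that although only $k=1$ is needed here, the theorem delivers the convergence for all $k$ simultaneously at no extra cost.
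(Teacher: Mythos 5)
Your proof is correct and matches the paper's intent exactly: the paper states Theorem \ref{Instab} as an immediate corollary of the preceding theorem, obtained precisely by taking $\omega=\D$ via the Riemann mapping theorem (with Kellogg--Warschawski supplying the boundary regularity) and specializing to $k=1$. Nothing is missing; your observation that homeomorphy to $\Omega$ yields simple connectedness of the $\Omega_\epsilon$ is the only detail the paper leaves implicit, and you handle it correctly.
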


\section{A stability result under a priori bounds}

In this section, we show that the stability of Weinstock's inequality can still be obtained provided some a priori information on the conformal map $g:\D \rightarrow \Omega$ is known. The family of convex sets, where stability occurs e.g. \cite{GMPT19}, can be described in terms of conformal mappings by the constraint
$$\text{Re}\left(1+z\frac{g''(z)}{g'(z)}\right)\geq 0.$$
 Our a priori condition will only include smooth enough sets but will allow for non-convex and non-starlike domains.

The strategy, based on the relationship between the Steklov problem on $\Omega$ and the weighted Steklov problem on $\D$ with weight $\Theta=|g'|$ on   $\partial \D$  is as follows:
\begin{itemize}[label=\textbullet]
\item We begin by the study of the stability of $\Theta\mapsto \sigma_1(\D,\Theta)$.
From the information that $\left(\ds \int_{\partial\D}\Theta d\sigma\right)\sigma_1(\D,\Theta)$  is close to $|\partial\D|\sigma_1(\D)$ we obtain that $\Theta$ is close to a constant in the $H^{-\frac{1}{2}}(\partial\D)$ norm. This assertion  holds provided that $\Theta$ is normalized to have its center of mass in 0, which amounts to replacing $g$ with $g\circ \phi$ where $\phi$ is a certain conformal automorphism of the disk. 

\item Using the a priori information on $g$, we improve the norm and get that $\Theta$ is close to a constant in   $L^\infty(\partial\D)$. 

\item We transfer this result to sets $\Omega$ by showing that the Hausdorff asymmetry of $\Omega$ is small when $|g'|$ is close to a constant in $L^\infty(\partial\D)$.\end{itemize}
In view of  Theorem  \ref{Instab},   the a priori information on the conformal mapping is crucial.  
 
As previously mentionned, we first begin by the study of the Steklov problem of the disk with weight $\Theta$. For all $\Theta\in L^\infty(\partial \D,\mathbb{R}_+^*)$ it will be practical to introduce the deficit:
\[\deff(\Theta)=\frac{1}{\sigma_1(\partial \D,\Theta)}-1.\]
If $f\in L^2(\partial\D)$, we denote by $\widehat{f}(n)=\frac{1}{2\pi}\int_0^{2\pi}f(e^{it})e^{-int}dt$ its Fourier coefficients, and we define the $H^s$ semi-norm by:
\[\Vert f\Vert_{H^s(\partial \D)}^2=\sum_{n\in\mathbb{Z}^*}|n|^{2s}|\widehat{f}(n)|^2.\]
Provided we restrict ourselves to functions that verify $\widehat{f}(0)=0$, this becomes a norm. Using this definition, it can be checked with Fourier series decomposition that for all $f\in H^{\frac{1}{2}}(\partial\D)$, the harmonic extension $\mathcal{H}f$ of $f$ is well-defined and:
 \[\int_{\D}|\nabla\mathcal{H}f|^2dx =2\pi \Vert f\Vert_{H^{\frac{1}{2}}(\partial\D)}^2.\]

\begin{prop}\label{EstimationSobolev}
Let $\Theta\in L^\infty(\partial \D,\mathbb{R}_+)$ be such that $\widehat{\Theta}(0)=1$ and $\widehat{\Theta}(\pm 1)=0$. Assume morever that $\deff(\Theta)\leq 1$.  Then, for some constant $C>0$ independent of $\Theta$
\[\Vert \Theta -1\Vert_{H^{-\frac{1}{2}}(\partial \D)}\leq C\sqrt{\mbox{\rm \deff}(\Theta)}.\]

\end{prop}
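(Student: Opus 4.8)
The plan is to prove the equivalent lower bound on the deficit, namely that there is a universal $c>0$ with $\deff(\Theta)\geq c\,\Vert\Theta-1\Vert_{H^{-1/2}(\partial\D)}^2$; taking square roots then gives the statement. Write $w=\Theta-1$, so that $\widehat w(0)=0$ and $\widehat w(\pm1)=0$, and recall $\sigma_1(\D,\Theta)^{-1}=1+\deff(\Theta)$. The starting point is the variational inequality in the form that has already subtracted the $\Theta$-weighted mean: for every $v\in H^{1/2}(\partial\D)$, allowing complex $v$ by adding the inequalities for its real and imaginary parts,
\[
\int_{\partial\D}\Theta|v|^2\,d\sigma-\frac{1}{2\pi}\Big|\int_{\partial\D}\Theta v\,d\sigma\Big|^2\;\leq\;(1+\deff(\Theta))\int_{\D}|\nabla\mathcal H v|^2\,dx,
\]
where $\int_{\partial\D}\Theta\,d\sigma=2\pi$ was used. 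The left-hand side is the $\Theta$-weighted variance of $v$, which I denote $\mathrm{Var}_\Theta(v)$; it is nonnegative and equals $\int_{\partial\D}\Theta\,|v-\bar v|^2$ with $\bar v$ the weighted mean. The assumption $\widehat\Theta(\pm1)=0$ is exactly what makes $e^{\pm i\theta}$ weighted-orthogonal to the constants, so these unit-frequency modes are the natural near-optimizers to perturb.

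The main step is to test this inequality against the resonant perturbation
\[
v=e^{i\theta}+\phi,\qquad \phi=\sum_{k\geq2}c_k e^{ik\theta},\qquad c_k=s\,\frac{\widehat w(k-1)}{k-1},
\]
with a scalar $s>0$ to be fixed. Because $e^{i\theta}$ and $\phi$ have disjoint frequency supports, $\int_{\D}|\nabla\mathcal H v|^2=2\pi\big(1+\sum_{k\geq2}k|c_k|^2\big)$, and $\int_{\partial\D}\Theta e^{i\theta}\,d\sigma=0$ gives $\mathrm{Var}_\Theta(v)=2\pi+2\,\mathrm{Re}\int_{\partial\D}\Theta e^{-i\theta}\phi\,d\sigma+\mathrm{Var}_\Theta(\phi)$. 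Using $\widehat\Theta(\pm1)=0$, the cross term is the resonant quantity
\[
2\,\mathrm{Re}\int_{\partial\D}\Theta e^{-i\theta}\phi\,d\sigma=4\pi\,\mathrm{Re}\sum_{k\geq3}\overline{\widehat w(k-1)}\,c_k=4\pi\,s\,S_+,\qquad S_+:=\sum_{m\geq2}\frac{|\widehat w(m)|^2}{m}.
\]
I would now simply discard the nonnegative term $\mathrm{Var}_\Theta(\phi)\geq0$, which is what disposes of every genuinely nonlinear contribution. Dividing by $2\pi$ and cancelling the constant mode, the inequality collapses to $2sS_+\leq\deff(\Theta)+(1+\deff(\Theta))\sum_{k\geq2}k|c_k|^2$. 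A one-line computation gives $\sum_{k\geq2}k|c_k|^2=s^2\sum_{m\geq2}\frac{m+1}{m^2}|\widehat w(m)|^2\leq\tfrac32 s^2 S_+$, so with $\deff(\Theta)\leq1$ one obtains $2sS_+\leq\deff(\Theta)+3s^2S_+$. Choosing $s=\tfrac13$ yields $\tfrac13 S_+\leq\deff(\Theta)$, i.e. $S_+\leq3\deff(\Theta)$. Running the identical argument with $v=e^{-i\theta}+\bar\phi$ controls the negative frequencies $S_-:=\sum_{m\leq-2}|\widehat w(m)|^2/|m|$, and summing gives $\Vert w\Vert_{H^{-1/2}}^2=S_++S_-\leq6\,\deff(\Theta)$, which is the claim with $C=\sqrt6$.

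The point I expect to require the most care, and which is really the heart of the matter, is the treatment of the remainder terms: because $w$ is only bounded (not small) in $L^\infty$, the contributions $\int_{\partial\D}\Theta|\phi|^2$ and the admissibility correction $\tfrac{1}{2\pi}|\int_{\partial\D}\Theta v|^2$ are cubic and quartic in $w$ and cannot be absorbed by any a priori smallness. The resolution is structural rather than computational: these two terms combine exactly into $\mathrm{Var}_\Theta(\phi)$, which is nonnegative and may be dropped outright; keeping them as a pair (rather than estimating each separately) is essential, since discarding only the positive half would leave an unbounded quartic defect. The other delicate choice is the correction $c_k\propto\widehat w(k-1)/(k-1)$, which is precisely the first-order perturbation of the eigenfunction $e^{i\theta}$ and is what turns the cross term into the clean resonant sum $S_+$; any cheaper, lower-frequency correction would fail to see the high Fourier modes of $\Theta$, while the $H^{-1/2}$ weighting is exactly what keeps its Dirichlet cost $\sum_k k|c_k|^2$ comparable to $S_+$.
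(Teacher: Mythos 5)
Your proof is correct, and it takes a genuinely different route from the paper's, even though both grow from the same germ: perturbing the eigenfunction $u(z)=z$ in the Rayleigh quotient for $\sigma_1(\D,\Theta)$ and exploiting $\widehat{\Theta}(\pm 1)=0$. The paper keeps the perturbation abstract --- an arbitrary normalized real $\phi\in H^{1/2}(\partial\D)$ with a complex amplitude $\zeta$ --- bounds the error terms crudely (e.g. $\left(\int_{\partial\D}\Theta\phi\, d\sigma\right)^2\leq \left(2\pi\Vert\Theta-1\Vert_{H^{-1/2}(\partial\D)}\right)^2$), optimizes in $\zeta$, recovers $\Vert(\Theta-1)u\Vert_{H^{-1/2}(\partial\D)}$ by duality over $\phi$, and passes to $\Vert\Theta-1\Vert_{H^{-1/2}(\partial\D)}$ via the Fourier-shift comparison $\Vert\Theta-1\Vert_{H^{-1/2}(\partial\D)}\leq\sqrt{2}\,\Vert u(\Theta-1)\Vert_{H^{-1/2}(\partial\D)}$. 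You instead commit to the explicit first-order corrector $c_k=s\,\widehat{w}(k-1)/(k-1)$ and use the exact identity $\mathrm{Var}_\Theta(v)=2\pi+4\pi s S_+ +\mathrm{Var}_\Theta(\phi)$, then discard the nonnegative block $\mathrm{Var}_\Theta(\phi)$ wholesale --- this is exactly where $\Theta\geq 0$ and $\widehat{\Theta}(\pm 1)=0$ do their work for you --- so that no higher-order term ever needs estimating. What your version buys: an explicit constant ($C=\sqrt{6}$) and, more importantly, a closure that is uniform on the whole stated range $\deff(\Theta)\leq 1$. The paper's endgame, $\Vert\Theta-1\Vert_{H^{-1/2}(\partial\D)}\big/\sqrt{2+\Vert\Theta-1\Vert_{H^{-1/2}(\partial\D)}^2}\leq(1+\sqrt{2})\sqrt{\deff(\Theta)}$, is self-referential: since the left-hand side is always $<1$, it pins down the norm only when $(1+\sqrt{2})^2\deff(\Theta)<1$, i.e. $\deff(\Theta)<3-2\sqrt{2}\approx 0.17$, so your direct closure in fact repairs a small lacuna in the printed argument for $\deff(\Theta)$ between $3-2\sqrt{2}$ and $1$. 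What the paper's route buys is flexibility: testing against arbitrary $\phi$ yields the intermediate dual-norm statement on $\Vert(\Theta-1)u\Vert_{H^{-1/2}(\partial\D)}$ without one having to guess the optimal corrector. Two minor remarks on your write-up: since $w$ is real-valued one has $S_-=S_+$, so the second run with $e^{-i\theta}+\overline{\phi}$ is redundant and $\Vert w\Vert_{H^{-1/2}(\partial\D)}^2=2S_+\leq 6\,\deff(\Theta)$ already follows from the first; and you should say explicitly that $w\in L^2(\partial\D)$ guarantees $\phi\in H^{1/2}(\partial\D)$ and $S_+<\infty$, which is what legitimizes absorbing the term $\tfrac{1}{3}S_+$ into the left-hand side at the final step.
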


\begin{proof}
Let $u(z)=z$ be the first (complex) eigenfunctions associated to $\Theta\equiv  1$. Let $\phi$ be a normalized and real-valued $H^{\frac{1}{2}}(\partial\D)$ function. Let also $\zeta\in\CC$ be a number that will be fixed later. Suppose also that $\widehat{\phi}(0)=0$; we will see that we lose no generality with this. Then, from the definition of $\deff(\Theta)$:
\[(1+\deff(\Theta))\int_{\D}|\nabla(u+\zeta \mathcal{H}\phi)|^2dx\geq \int_{\partial\D}\Theta|u+\zeta\phi|^2d\sigma-\left|\int_{\partial \D}\Theta(u+\zeta\phi)d\sigma\right|^2.\]
This can be rewritten as:
\[(1+\deff(\Theta))\left(2\pi+2\pi|\zeta|^2 +2 \text{Re}\left(\overline{\zeta} \int_{\partial \D} u\phi d\sigma\right)\right)\geq \]
\[\geq 2\pi+\int_{\partial\D}\Theta\left( 2\text{Re}(\overline{\zeta} u\phi)+|\zeta|^2\phi^2\right)d\sigma-|\zeta|^2\left(\int_{\partial \D}\Theta \phi d\sigma\right)^2.\]
Thus:

\[2\text{Re}\left(\overline{\zeta}\int_{\partial\D}(\Theta-1) u\phi d\sigma\right) \leq\]
\[\leq  2\deff(\Theta)\text{Re}\left(\overline{\zeta}\int_{\partial\D}u\phi d\sigma\right)+ 2\pi\deff(\Theta)+|\zeta|^2\left(2\pi(1+\deff(\Theta))+\left(\int_{\partial \D}\Theta \phi d\sigma\right)^2\right).\]

Using $\deff(\Theta)\leq 1$, this can be simplified to:

\[\text{Re}\left(\frac{\zeta}{|\zeta|}\int_{\partial\D}(\Theta-1) u\phi d\sigma\right)\leq \deff(\Theta)\text{Re}\left(\frac{\overline{\zeta}}{|\zeta|}\int_{\partial\D}u\phi d\sigma\right)+ \frac{\pi \deff(\Theta)}{|\zeta|}+\frac{|\zeta|}{2}\left(4\pi+\left(\int_{\partial \D}\Theta \phi\right)^2 d\sigma\right).\]
Moreover:
\[\int_{\partial \D}\Theta \phi d\sigma=\int_{\partial \D}(\Theta-1) \phi d\sigma\leq 2\pi\Vert \Theta -1\Vert_{H^{-\frac{1}{2}}(\D)},\]
and
\[\left|\int_{\partial\D}u\phi d\sigma\right|=|2\pi\widehat{\phi}(1)|\leq 2\pi.\]
Thus, after normalization by $2\pi$:

\[\text{Re}\left(\frac{\zeta}{|\zeta|}\fint_{\partial\D}(\Theta-1) u\phi d\sigma\right) \leq \deff(\Theta)+\frac{\deff(\Theta)}{2|\zeta|}+\frac{|\zeta|}{2}\left(2+\Vert \Theta -1\Vert_{H^{-\frac{1}{2}}(\D)}^2\right).\]

We optimize in $\zeta$ and get

\[\left|\fint_{\partial\D}(\Theta-1) u\phi d\sigma\right|\leq  \deff(\Theta)+\sqrt{\deff(\Theta)\left(2+\Vert \Theta -1\Vert_{H^{-\frac{1}{2}}(\D)}^2\right)}.\]
Since $\deff(\Theta)\leq \sqrt{\deff(\Theta)}$:

\[\left|\fint_{\partial\D}(\Theta-1) u\phi d\sigma\right|\leq  \left(1+\frac{1}{\sqrt{2}}\right)\sqrt{\deff(\Theta)\left(2+\Vert \Theta -1\Vert_{H^{-\frac{1}{2}}(\D)}^2\right)}.\]

Since $\phi$ is free (but normalized in $H ^{1/2}(\partial \D)$), we get
\[\Vert (\Theta-1)u\Vert_{H ^{-1/2}(\partial \D)}\leq\left(1+\frac{1}{\sqrt{2}}\right)\sqrt{\deff(\Theta)\left(2+\Vert \Theta -1\Vert_{H^{-\frac{1}{2}}(\D)}^2\right)}.\]

By direct computation, using $\widehat{\Theta}(1)=\widehat{\Theta}(-1)=0$:
\[\Vert (\Theta-1)\Vert_{H^{-\frac{1}{2}}(\partial\D)}\leq \sqrt{2}\Vert u(\Theta-1)\Vert_{H^{-\frac{1}{2}}(\partial\D)},\]
so that
\[\frac{\Vert \Theta-1\Vert_{H ^{-1/2}(\partial \D)}}{\sqrt{2+\Vert \Theta -1\Vert_{H^{-\frac{1}{2}}(\D)}^2}}\leq (1+\sqrt{2})\sqrt{\deff(\Theta)},\]
which gives the conclusion.

\end{proof}

In order to get the improved estimate on $\Vert \Theta -1\Vert_{L^\infty(\partial \D)}$, we assume some knowledge on the smoothness of $\Theta$. 

\begin{prop}\label{EstimateCa}
Let $\Theta\in\C^{0,\alpha}(\partial \D,\mathbb{R})$ for $\alpha\in ]0,1]$ be such that $\widehat{\Theta}(0)=1$, $\widehat{\Theta}(\pm 1)=0$, and $\mbox{\rm \deff}(\Theta)\leq 1$. Then:
\[\Vert \Theta -1\Vert_{L^\infty(\partial \D)}\leq C_\alpha(\Vert\Theta\Vert_{\C^{0,\alpha}(\partial \D)})\Vert \Theta -1\Vert_{H^{-\frac{1}{2}}(\partial\D)}^{\frac{1}{1+\alpha^{-1}}}.\]
\end{prop}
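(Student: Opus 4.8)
Let me read the statement as a Gagliardo--Nirenberg-type interpolation inequality and, for that purpose, forget the spectral hypotheses. Writing $f=\Theta-1$, the only features I shall use are that $f$ has zero mean (this is exactly $\widehat{\Theta}(0)=1$) together with the two bounds $[f]_{\C^{0,\alpha}(\partial\D)}\le M:=\Vert\Theta\Vert_{\C^{0,\alpha}(\partial\D)}$ and $\Vert f\Vert_{H^{-1/2}(\partial\D)}=:\delta$. Setting $A:=\Vert f\Vert_{L^\infty(\partial\D)}$, the goal becomes the scale-invariant estimate $A\lesssim_M \delta^{\alpha/(1+\alpha)}$, the exponent being exactly $\tfrac{1}{1+\alpha^{-1}}$. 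The conditions $\widehat{\Theta}(\pm1)=0$ and $\deff(\Theta)\le1$ play no role in this step and are merely inherited from the previous proposition.

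First I would localize at a maximum. Pick $x_0\in\partial\D$ with $|f(x_0)|=A$; replacing $f$ by $-f$ if necessary (which changes neither the H\"older seminorm, nor the $H^{-1/2}$ norm, nor the zero-mean property) we may assume $f(x_0)=A>0$. H\"older continuity gives $f(x)\ge A-M r^\alpha$ for every $x$ in the arc $\Gamma_r$ of radius $r$ around $x_0$. I would then test $f$ against a nonnegative bump $\phi_r\ge 0$ supported in $\Gamma_r$. On one hand, positivity yields the lower bound
\[\int_{\partial\D} f\phi_r\,d\sigma \ \ge\ (A-Mr^\alpha)\int_{\partial\D}\phi_r\,d\sigma,\]
valid as soon as $A\ge Mr^\alpha$. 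On the other hand, since $\widehat{f}(0)=0$ the $n=0$ mode drops out of the pairing, and Cauchy--Schwarz against the $H^{1/2}$ weights gives
\[\int_{\partial\D} f\phi_r\,d\sigma \ =\ 2\pi\!\!\sum_{n\neq 0}\widehat{f}(n)\,\overline{\widehat{\phi_r}(n)}\ \le\ 2\pi\,\delta\,\Vert\phi_r\Vert_{H^{1/2}(\partial\D)}.\]

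The crucial --- and only delicate --- point is that $\Vert\phi_r\Vert_{H^{1/2}(\partial\D)}$ can be kept bounded \emph{uniformly} in $r$ while $\int_{\partial\D}\phi_r\,d\sigma\asymp r$. This is precisely the borderline nature of $H^{1/2}$ in dimension one: $s=\tfrac12=\tfrac{d}{2}$ is the critical Sobolev exponent, so the homogeneous seminorm is invariant under the dilation $x\mapsto x/r$. Concretely, I would fix once and for all a single tent (or smooth) profile $\psi$ and set $\phi_r(x)=\psi\!\left(\tfrac{x-x_0}{r}\right)$; reading the seminorm through the Gagliardo double integral
\[\Vert\phi_r\Vert_{H^{1/2}}^2\ \asymp\ \iint \frac{|\phi_r(x)-\phi_r(y)|^2}{|x-y|^2}\,dx\,dy,\]
the change of variables $x=x_0+r\xi$, $y=x_0+r\eta$ shows this quantity equals the corresponding integral for $\psi$, up to a correction that is negligible for small $r$ when passing from the line to the circle. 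Hence $\Vert\phi_r\Vert_{H^{1/2}(\partial\D)}\le C_1$ with $C_1$ an absolute constant. I expect verifying this uniform bound (equivalently, controlling the circle-versus-line discrepancy in the kernel, or directly estimating the Fourier coefficients of $\phi_r$) to be the main technical obstacle; everything else is bookkeeping.

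Combining the two bounds yields $(A-Mr^\alpha)\,r\lesssim \delta$ for all admissible $r$. Choosing $r=(A/2M)^{1/\alpha}$, so that $A-Mr^\alpha=A/2$, gives $A^{1+1/\alpha}\lesssim_M\delta$, that is $A\lesssim_M\delta^{1/(1+\alpha^{-1})}$, which is the claim. Two boundary regimes must be absorbed into the constant $C_\alpha(M)$: if the optimal radius exceeds a fixed fraction $r_{\max}$ of the circle (equivalently, $A$ is comparable to $M$), I would instead run the same estimate with $r=r_{\max}$; and if $\delta$ is not small, I would simply use $A\le M$ together with $\delta\le\delta^{1/(1+\alpha^{-1})}$ (valid for $\delta\le1$, since the exponent is $<1$). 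In both cases the inequality holds after enlarging $C_\alpha(M)$, completing the proof.
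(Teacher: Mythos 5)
Your proposal is correct and is essentially the paper's own argument: both localize at the extremum of $\Theta-1$, test against a bump of width $r\sim\left(m/[\Theta]_{\C^{0,\alpha}}\right)^{1/\alpha}$ whose $H^{1/2}(\partial\D)$ norm stays uniformly bounded by the dilation invariance of the critical seminorm, pair with the $H^{-1/2}$ norm (the zero mode dropping since $\widehat{\Theta}(0)=1$), and optimize in $r$ to get the exponent $\frac{1}{1+\alpha^{-1}}$. The only cosmetic differences are that the paper verifies the uniform $H^{1/2}$ bound by a Fourier Riemann-sum computation, $\sum_{n}\epsilon|\epsilon n||\mathcal{F}\phi(\epsilon n)|^2\rightarrow\int_{\R}|\xi||\mathcal{F}\phi(\xi)|^2d\xi$, rather than through the Gagliardo double integral, and that your explicit handling of the edge cases (large $r$, large $\delta$) and your observation that $\widehat{\Theta}(\pm 1)=0$ and $\deff(\Theta)\leq 1$ are not needed here are both sound and consistent with the paper's proof, which also never uses those hypotheses.
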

\begin{proof}

Let $\phi$ be a positive function, smooth with support in $[-1,+1]$. Let $\phi_\epsilon(e^{it})=\phi(t/\epsilon)$. One can check that:
\[\Vert \phi_\epsilon\Vert_{H^{\frac{1}{2}}(\partial \D)}=\mathcal{O}_{\epsilon\rightarrow 0}(1). \]
Indeed, denoting  $\mathcal{F}$  the Fourier transform in $\R$, we have
\[\Vert \phi_\epsilon\Vert_{H^{\frac{1}{2}}(\partial\D)}^2=\sum_{n\in\mathbb{Z}}|n||\widehat{\phi_\epsilon}(n)|^2=\sum_{n\in\mathbb{Z}}\epsilon|\epsilon n||\mathcal{F}\phi(\epsilon n)|^2\underset{\epsilon\rightarrow 0}{\longrightarrow} \int_{\mathbb{R}}|\xi||\mathcal{F}\phi(\xi)|^2d\xi<\infty.\]
 Suppose now that $\Theta$ reaches its maximum $1+m$ in $e^{it_0}$. We choose $\epsilon$ small enough to have $(\Theta -1)\phi_\epsilon(e^{i(t_0+\cdot)})\geq \frac{1}{2}m\phi_\epsilon(e^{i(t_0+\cdot)})$ : with the regularity on $\Theta$, we can just take $\epsilon=\left(\frac{m}{2[\Theta]_{C^\alpha}}\right)^{\alpha^{-1}}$. Then:
\[\int_{\partial \D}(\Theta-1)\phi_\epsilon(e^{i(t_0+\cdot)})d\sigma \geq  \frac{1}{2}m\int_{\partial\D}\phi_\epsilon(e^{i(t_0+\cdot)})d\sigma=cm^{1+\alpha^{-1}}.\]
For a certain constant $c$ that only depends on $\phi$. This gives $m\leq C\Vert \Theta -1\Vert_{H^{-\frac{1}{2}}(\partial\D)}^{\frac{1}{1+\alpha^{-1}}}$. We can do the same for the minimum of $\Theta-1$ by taking $-\phi_\epsilon$ instead, getting the result.

\end{proof}

\begin{defn}
For all $K>0$, $\alpha\in ]0,1]$, we denote $\mathcal{A}(K,\alpha)$ be the set of simply connected sets $\Omega$ with perimeter $2\pi$ such that there exists a conformal map $g:\D\rightarrow \Omega$ with:
\[\Vert \log\left(|g'|\right)\Vert_{\C^{0,\alpha}(\partial \D)}\leq K.\]
\end{defn}

\begin{rem}\rm
In particular, these domains are  $\C^{1,\alpha}$, so this class excludes domains with angles (e.g. polygonal sets).
\end{rem}

Our main result of stability is the following:

\begin{thm}\label{bmn01}
Let $K>0$, $\alpha\in ]0,1]$. Then there exists a constant $M=M(K,\alpha)>0$ such that for any $\Omega\in \mathcal{A}(K,\alpha)$:

\[\sigma_1(\D)-\sigma_1(\Omega) \geq M \Big (\overline{d_H}(\Omega, \D)\Big )^{2(1+\alpha^{-1})} .\]

Where $d_H$ is the Hausdorff distance to a disk a radius $1$.
\end{thm}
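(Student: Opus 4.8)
The plan is to transport the whole question to the unit disk through the conformal map and then chain the two stability estimates already established. For $\Omega\in\mathcal{A}(K,\alpha)$ with conformal map $g:\D\to\Omega$, the conformal invariance lemma gives $\sigma_1(\Omega)=\sigma_1(\D,|g'|)$, and the perimeter normalization $|\partial\Omega|=2\pi$ reads exactly $\widehat{|g'|}(0)=1$. Writing $\Theta:=|g'|$ and using $\sigma_1(\D)=1$, I would first record the elementary identity $\sigma_1(\D)-\sigma_1(\Omega)=\sigma_1(\Omega)\,\deff(\Theta)$. Since the claimed inequality is trivial when $\sigma_1(\D)-\sigma_1(\Omega)$ is bounded below — the perimeter $2\pi$ together with the bound $K$ forces the diameter, hence $\overline{d_H}(\Omega,\D)$, to stay bounded by a constant $C(K,\alpha)$, so one simply takes $M$ small — I may assume from now on that $\deff(\Theta)\leq 1$ and $\sigma_1(\Omega)\geq\tfrac12$, whence $\sigma_1(\D)-\sigma_1(\Omega)\geq\tfrac12\deff(\Theta)$.

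\textbf{Balancing.} To invoke Proposition~\ref{EstimationSobolev} I must arrange $\widehat{\Theta}(\pm1)=0$. I would do this by precomposing $g$ with a disc automorphism $\phi$: one has $|(g\circ\phi)'|=|g'\circ\phi|\,|\phi'|$ while $\sigma_1(\Omega)=\sigma_1(\D,|(g\circ\phi)'|)$ is unchanged, and a Hersch-type balancing argument (standard in conformal renormalization) produces $\phi$ for which the renormalized weight has vanishing first Fourier modes. The point to check is that this does not destroy the a priori bound. Because $\widehat{\Theta}(0)=1$ keeps the measure $\Theta\,d\sigma$ comparable to the uniform one with constants $e^{\pm K}$, its barycentre stays in a compact subset of $\D$ depending only on $K$; hence $\phi$ is a non-degenerate automorphism and $\|\log|(g\circ\phi)'|\|_{\C^{0,\alpha}(\partial\D)}\leq C(K,\alpha)$. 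Replacing $g$ by $g\circ\phi$ and keeping the same notation, $\Theta\in\C^{0,\alpha}$ now satisfies $\widehat{\Theta}(0)=1$, $\widehat{\Theta}(\pm1)=0$, $\deff(\Theta)\leq1$ and $\|\Theta\|_{\C^{0,\alpha}}\leq C(K,\alpha)$.

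\textbf{Chaining the two propositions.} With these normalizations in force, Proposition~\ref{EstimationSobolev} gives $\deff(\Theta)\gtrsim\|\Theta-1\|_{H^{-\frac12}(\partial\D)}^2$, and Proposition~\ref{EstimateCa} (whose constant is controlled since $\|\Theta\|_{\C^{0,\alpha}}\leq C(K,\alpha)$) gives $\|\Theta-1\|_{H^{-\frac12}(\partial\D)}\gtrsim\|\Theta-1\|_{L^\infty(\partial\D)}^{\,1+\alpha^{-1}}$, so that
\[
\sigma_1(\D)-\sigma_1(\Omega)\ \gtrsim\ \deff(\Theta)\ \gtrsim\ \bigl\| |g'|-1 \bigr\|_{L^\infty(\partial\D)}^{\,2(1+\alpha^{-1})},
\]
with all constants depending only on $K$ and $\alpha$. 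This already produces the exponent $2(1+\alpha^{-1})$, so the only thing left is a geometric comparison.

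\textbf{The geometric step (the main obstacle).} It remains to convert sup-norm closeness of $|g'|$ to $1$ into Hausdorff closeness of $\Omega$ to a disk, i.e. to prove $\overline{d_H}(\Omega,\D)\lesssim\||g'|-1\|_{L^\infty(\partial\D)}$, which is exactly the bound needed to match the exponent. I would normalize $g(0)=0$ and, after a rotation of $\Omega$ (which alters neither $\sigma_1$ nor $\overline{d_H}$ since $\D$ is rotation-invariant), take $a:=g'(0)>0$; writing $g(e^{i\theta})=e^{i\theta}\int_0^1 g'(re^{i\theta})\,dr$ and comparing $\partial\Omega$ to the circle of radius $a$ centred at the origin, the perimeter constraint forces $|a-1|\lesssim\||g'|-1\|_\infty$, while the deviation is governed by $\|e^{v}-1\|_{L^\infty(\D)}$ with $v=\log g'-\log a$, $v(0)=0$. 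Now $\operatorname{Re}v=\log|g'|-\log a$ is harmonic with small boundary values, hence small throughout $\D$ by the maximum principle; the genuine difficulty is $\operatorname{Im}v=\arg g'-\arg a$, the harmonic conjugate of $\log|g'|$, which controls the turning of the boundary. This is precisely where the $\C^{0,\alpha}$ hypothesis is indispensable: conjugation is unbounded on $L^\infty$, but combining the smallness of $\log|g'|$ in $L^\infty$ with its boundedness in $\C^{0,\alpha}$ (via interpolation of Hölder norms, or a Zygmund-type estimate for the conjugate function, with the balancing $\widehat{\Theta}(\pm1)=0$ removing the leading non-circular contribution) yields smallness of $\arg g'$ in sup norm and hence the desired bound on $\overline{d_H}$. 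I expect this conjugate-function control to be the technical heart of the argument; keeping it tight enough not to degrade the exponent $2(1+\alpha^{-1})$ — which is in any case not claimed to be optimal — is the delicate part. Combining it with the displayed chain closes the proof.
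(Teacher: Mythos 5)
Your skeleton coincides with the paper's proof up to its final step: transfer to the weighted problem $\sigma_1(\Omega)=\sigma_1(\D,\Theta)$ with $\Theta=|g'|$, balancing by a disk automorphism so that $\widehat{\Theta}(\pm1)=0$, and chaining Propositions \ref{EstimationSobolev} and \ref{EstimateCa} to get $\sigma_1(\D)-\sigma_1(\Omega)\gtrsim \Vert |g'|-1\Vert_{L^\infty(\partial\D)}^{2(1+\alpha^{-1})}$. Your non-degeneracy claim for the balancing is also the paper's: it proves a lemma that the balancing point satisfies $|\zeta_g|\leq r(K)<1$ via the vector field $F(\zeta)=\int_{\partial\D}|(g\circ\phi_{-\zeta})'(z)|\,z\,d\sigma$ and the two-sided bound $e^{-K}\leq\Theta\leq e^{K}$, showing $\langle F(\zeta),\zeta\rangle>0$ on $\partial\D_r$ for $r$ close to $1$; your barycentre heuristic is the right idea but would need exactly this computation. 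The genuine gap is the geometric step, which you correctly identify as the remaining obstacle but do not prove, and for which your proposed route would not deliver the stated exponent.

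The paper disposes of this step with a short lemma valid for \emph{every} conformal map, with no regularity hypothesis at all: $\overline{d_H}(\Omega,\D)\leq 3\Vert |g'|-1\Vert_{L^\infty(\partial\D)}$. Writing $\varepsilon=\Vert |g'|-1\Vert_{L^\infty(\partial\D)}$, the maximum principle applied to the harmonic function $\log|g'|$ gives $1-\varepsilon\leq |g'|\leq 1+\varepsilon$ on all of $\D$; hence the image of $g'$ contains no disk of radius larger than $\varepsilon$, and Bloch's theorem applied to $g'$ on each disk $\D_{z,1-|z|}$ yields $|g''(z)|\leq \varepsilon/\bigl(L(1-|z|)\bigr)$; integrating $g(z)-g'(0)z=\int_0^1(1-t)z^2 g''(tz)\,dt$ gives $|g(z)-g'(0)z|\leq \varepsilon/L$, so $\D_{1-(1+L^{-1})\varepsilon}\subset g(\D)\subset \D_{1+(1+L^{-1})\varepsilon}$, and $L\geq \tfrac12$ concludes. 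Note that $\arg g'$ is never estimated on the boundary, so the ``indispensable'' role you assign to the H\"older bound in this step is illusory (it is needed only in Proposition \ref{EstimateCa}). Moreover, your conjugate-function route fails quantitatively as sketched: for $u=\log|g'|$ with $\Vert u\Vert_{\infty}=\varepsilon$ and $[u]_{\C^{0,\alpha}}\leq K$, the best conjugate-function bound obtainable by splitting the singular integral (or Zygmund-type estimates) is of order $\varepsilon\log(1/\varepsilon)$, and interpolation of H\"older norms gives only $\varepsilon^{1-\delta}$; either way you would get $\overline{d_H}(\Omega,\D)\lesssim \varepsilon\log(1/\varepsilon)$ and hence $\sigma_1(\D)-\sigma_1(\Omega)\gtrsim \overline{d_H}^{\,p}\,\log^{-p}\bigl(1/\overline{d_H}\bigr)$ with $p=2(1+\alpha^{-1})$, which no constant $M$ can upgrade to the stated clean power. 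A repair within your framework would be to apply the $H^{-\frac12}$-to-$L^\infty$ interpolation of Proposition \ref{EstimateCa} directly to $\arg g'$, using that conjugation is an isometry for the $H^{-\frac12}$ semi-norm and bounded on $\C^{0,\alpha}$ — but that argument still has to be written out, whereas the Bloch-theorem lemma makes the whole detour unnecessary.
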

We previously showed that $\Vert \Theta-1\Vert_{L^\infty(\partial\D)}=\Vert |g'|-1\Vert_{L^\infty(\partial\D)}$ is small when $\sigma_1(\Omega)$ is close to $\sigma_1(\D)$. The link between this estimate and the Hausdorff asymmetry of $\Omega$ is given by the following lemma:

\begin{lem}
Let $g$ be a conformal map that sends $\D$ to $\Omega$, then:
\[\overline{d_H}(\Omega, \D)\leq 3\Vert |g'|-1\Vert_{L^\infty(\partial \D)}.\]
\end{lem}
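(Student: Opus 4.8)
The plan is to bound the Hausdorff asymmetry $\overline{d_H}(\Omega,\D)$ by first choosing a good translation of the disk, and then comparing the boundary of $\Omega$ radially against the translated circle. Write $g:\D\to\Omega$ conformal, set $\Theta=|g'|$ on $\partial\D$, and let $\delta:=\Vert \Theta-1\Vert_{L^\infty(\partial\D)}$. Since $\overline{d_H}$ is an infimum over translations, it suffices to produce \emph{one} point $z_0\in\R^2$ with $d_H(\partial\Omega,\partial\D+z_0)\leq 3\delta$; the natural candidate is $z_0=g(0)$, the image of the center. With this choice I would estimate, for each boundary point $g(e^{i\theta})$, its distance to the translated circle $\{g(0)+w:|w|=1\}$, which is $\big|\,|g(e^{i\theta})-g(0)|-1\,\big|$.

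The key computation is to control $|g(e^{i\theta})-g(0)|$ using the length of the radial path $r\mapsto g(re^{i\theta})$. First I would write
\[
g(e^{i\theta})-g(0)=\int_0^1 g'(re^{i\theta})\,e^{i\theta}\,dr,
\]
so that the radial arclength is $\int_0^1 |g'(re^{i\theta})|\,dr$, and this controls $|g(e^{i\theta})-g(0)|$ from above. For a lower bound I would use that the straight segment is shorter than the radial curve, or argue via the projection onto the direction $e^{i\theta}$. The heart of the matter is then to transfer the hypothesis $\Vert\,|g'|-1\Vert_{L^\infty(\partial\D)}\leq\delta$ from the boundary $\partial\D$ to the interior radii $r\in[0,1]$. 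The clean way is to observe that $\log|g'|$ is harmonic in $\D$ (being the real part of the holomorphic function $\log g'$, which is well-defined since $g'$ does not vanish on the simply connected domain $\D$). Hence $\log|g'|$ satisfies the maximum principle, and the boundary bound $\big|\log|g'|\big|\leq \log(1+\delta)\leq$ (a multiple of) $\delta$ propagates to all of $\D$ by the maximum principle, giving $|\,|g'(re^{i\theta})|-1\,|\lesssim\delta$ uniformly in $r$. I would keep the constants explicit enough to land on the stated factor $3$ (using $\log(1+\delta)\le\delta$ and $e^{\delta}-1\le\delta+O(\delta^2)$, absorbing lower-order terms into the constant, or arguing directly that each radius has length within $\delta$ of $1$).

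Combining these pieces, for every $\theta$ the quantity $\big|\,|g(e^{i\theta})-g(0)|-1\,\big|$ is at most a small multiple of $\delta$, which bounds the one-sided distance from $\partial\Omega$ to $\partial\D+g(0)$. The reverse one-sided distance (every point of the translated circle is close to $\partial\Omega$) follows by the same radial estimate together with the fact that $\theta\mapsto g(e^{i\theta})$ is a parametrization of $\partial\Omega$ sweeping out all directions, so no direction is missed. Taking the max over both one-sided distances yields $d_H(\partial\Omega,\partial\D+g(0))\leq 3\delta$, hence $\overline{d_H}(\Omega,\D)\leq 3\Vert|g'|-1\Vert_{L^\infty(\partial\D)}$.

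I expect the main obstacle to be the careful bookkeeping in the lower bound for $|g(e^{i\theta})-g(0)|$ and in matching directions between the two circles: controlling the radial arclength from above is immediate, but ensuring the chord does not shrink too much (so that points are not closer to the center than $1-3\delta$) requires either a geometric projection argument or a quantitative use of the near-constancy of $g'$ along the whole radius, not just at its endpoint. Propagating the boundary estimate inward via harmonicity of $\log|g'|$ is the conceptual key that makes the interior control possible, and getting the constant to be exactly $3$ rather than a larger absolute constant is where the estimate must be done with some care.
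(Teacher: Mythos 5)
There is a genuine gap, and it sits exactly where you flagged your ``main obstacle'': the lower bound on the chords $|g(e^{i\theta})-g(0)|$. The maximum principle (applied to the harmonic function $\log|g'|$) propagates only the \emph{modulus} bound $1-\delta\le|g'|\le 1+\delta$ into $\D$; it gives no control whatsoever on $\arg g'$, because harmonic conjugation is unbounded on $L^\infty$. Your two suggested tools both fail for this reason. The chord-versus-arclength comparison runs the wrong way: the chord is \emph{shorter} than the radial curve, so it only reproduces the upper bound $|g(e^{i\theta})-g(0)|\le\int_0^1|g'(re^{i\theta})|\,dr\le 1+\delta$, which is the easy inclusion $\Omega\subset \D_{1+\delta}+g(0)$. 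The projection argument needs
\[
\big\langle g(e^{i\theta})-g(0),e^{i\theta}\big\rangle=\int_0^1 \mathrm{Re}\,g'(re^{i\theta})\,dr\ \ge\ 1-C\delta,
\]
i.e.\ $\mathrm{Re}\,g'\approx 1$ along the whole radius, and that does not follow from $\bigl|\,|g'|-1\,\bigr|\le\delta$: for instance $\log g'(z)=ic\log\frac{1}{1-z}$ has $\Vert\,|g'|-1\Vert_{L^\infty}=O(c)$ (and is univalent for small $c$), yet $\arg g'$ is unbounded near $z=1$ and $\mathrm{Re}\,g'$ oscillates through $[-1,1]$ there. So ``near-constancy of $g'$ along the whole radius'' is precisely what your hypotheses, as used, do not yield; with only modulus control the radial image can in principle bend, and nothing in your sketch quantifies why it cannot bend enough to shorten a chord by more than $O(\delta)$. (Your reverse inclusion via ``sweeping all directions'' is also only a sketch, but it would be repairable by a degree argument if the chord bounds held; the chord lower bound is the real blocker.)

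The paper's proof supplies exactly the missing ingredient, and it is a genuinely different mechanism: it controls the full complex derivative, phase included, via Bloch--Landau. Since $g'(\D)$ lies in the annulus $\{1-\varepsilon\le|w|\le1+\varepsilon\}$ (your maximum-principle step, which the paper also uses), the image of $g'$ contains no disk of radius $>\varepsilon$; applying Bloch's theorem to $g'$ on each disk $\D_{z,1-|z|}$ gives $|g''(z)|\le \varepsilon/(L(1-|z|))$, with $L\ge\frac12$ the Landau constant. Feeding this into the Taylor formula with integral remainder,
\[
|g(z)-g(0)-g'(0)z|\le\int_0^1(1-t)|z|^2\,\frac{\varepsilon}{L(1-t|z|)}\,dt\le\frac{\varepsilon}{L},
\]
one finds that $g$ is uniformly within $\varepsilon/L$ of the similarity $z\mapsto g(0)+g'(0)z$ with $|g'(0)|\in[1-\varepsilon,1+\varepsilon]$, whence $\D_{1-(1+L^{-1})\varepsilon}\subset g(\D)-g(0)\subset \D_{1+(1+L^{-1})\varepsilon}$ and the constant $3$ follows from $L\ge\frac12$. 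If you want to salvage your radial-chord framework, you must import this $g''$ bound (or an equivalent control on the oscillation of $\arg g'$); without it the argument cannot close.
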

\begin{rem}\rm
The main argument of the proof will be the use of Bloch's theorem that we remind here: {\it let $f$ be any holomorphic function defined on a disk $\D_{z,r}$. Then the image of $f$ contains a disk of radius $L|f'(z)|r$ where $L>0$ is a universal constant called the Landau constant.}
The best known result is $L\geq\frac{1}{2}$, proven in \cite{Ahl38}.
\end{rem}

\begin{proof}
We write $\epsilon=\Vert |g'|-1\Vert_{L^\infty(\partial \D)}$. Using the   maximum principle, we know that for all $z\in \D$:
\[1-\varepsilon\leq |g'|\leq 1+\varepsilon.\]
For all $z\in \D$, the disk $\D_{z,1-|z|}$ is in $\D$, and so, according to Bloch's theorem, the image of $g'$ constains a disk of radius $L|g''(z)|(1-|z|)$ where $L$ is the Landau constant. Since the image of $g'$ contains no disk with radius larger than $\epsilon$, we deduce that for all $z\in D$:
\[|g''(z)|\leq \frac{\varepsilon}{L(1-|z|)}.\]
Now, this implies in particular that for all $z\in \D$:
\begin{align*}
|g(z)-g'(0)z|&\leq \int_0^1 |(1-t)|z|^2 g''(tz)|dt\\
&\leq \int_0^1 (1-t)|z|^2 \frac{\epsilon}{L(1-t|z|)}dt\\
&\leq\frac{\epsilon}{L}.
\end{align*}
In particular;
\[\D_{1-(1+L^{-1})\epsilon}\subset g(\D)\subset \D_{1+(1+L^{-1})\epsilon}.\]
Since $L\geq\frac{1}{2}$, we get the result.
\end{proof}
We can now prove the main result.
\begin{proof}  (of Theorem \ref{bmn01})
The set $\Omega$ belongs to  $\mathcal{A}(K,\alpha)$, so there exists a conformal representation $g$ of $\Omega$ for which $\Vert \log\left(|g'|\right)\Vert_{C^{0,\alpha}}\leq K$. We let $\Theta=|g'|_{|\partial \D}$. The perimeter constraint gives $\widehat{\Theta}(0)=1$ and we suppose for now that $\widehat{\Theta}(\pm 1)=0$.\bigbreak

The hypothesis $\Omega\in \mathcal{A}(K,\alpha)$ tells us that $\Theta$ is bounded by a constant depending only on $K$ in $\C^{0,\alpha}$. By applying Proposition \ref{EstimateCa}, we know that there exists a constant $M=M(K)>0$ such that:
\[\Vert |g'|-1\Vert_{L^\infty(\partial \D)}\leq M|\sigma_1(\D)-\sigma_1(\Omega)|^{\frac{1}{2(1+\alpha^{-1})}}.\]
According to the previous lemma, we get:
\[\overline{d_H}(\Omega, \D)\leq 3 M|\sigma_1(\D)-\sigma_1(\Omega)|^{\frac{1}{2(1+\alpha^{-1})}}.\]

Here we supposed that $g$ was normalized in the sense that $\widehat{\Theta}(\pm 1)=0$. Let us show that we can always normalize $g$ without changing $\Vert \log(|g'|)\Vert_{\C^{\alpha}(\partial \D)}$ too much: the idea is that there exists a unique $\zeta_g\in D$ such that $g\circ \phi_{\zeta_g}^{-1}$ is well-normalized, where $\phi_\zeta(z):=\frac{z+\zeta}{1+\overline{\zeta}z}$ is a conformal automorphism of the disk. It could happen that the $\C^{0,\alpha}$ norm of $g\circ \phi_{\zeta_g}^{-1}$ explodes when $|\zeta_g|$ gets too close to $1$. The following lemma shows that it is not the case:
\begin{lem} Let $g$ be a conformal map, then there exists $r=r\left(\Vert\log\left(|g'|\right)\Vert_{L^\infty(\D)}\right)$ in $[0,1[$ such that $|\zeta_g|\leq r$.
\end{lem}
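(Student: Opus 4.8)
The plan is to recognize $\zeta_g$ as the \emph{conformal barycenter} of the probability measure that $|g'|$ carries on $\partial\D$, and then to show that such a barycenter stays away from $\partial\D$ unless the density of that measure degenerates — which the $L^\infty$ bound on $\log|g'|$ forbids.

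First I would make the defining equation of $\zeta_g$ explicit. Writing $h=g\circ\phi_\zeta^{-1}$ and $\Theta_h=|h'|$ on $\partial\D$, the change of variables $w=\phi_\zeta(z)$ together with the identity $|h'(w)|\,d\sigma(w)=|g'(z)|\,d\sigma(z)$ gives
\[
\widehat{\Theta_h}(1)=\frac{1}{2\pi}\int_{\partial\D}\overline{\phi_\zeta(z)}\,|g'(z)|\,d\sigma(z).
\]
Hence the normalization $\widehat{\Theta_h}(\pm1)=0$ is equivalent to the single complex equation $\int_{\partial\D}\phi_\zeta\,d\mu=0$, where $\mu=\bigl(\int_{\partial\D}|g'|\,d\sigma\bigr)^{-1}|g'|\,\Hl_{\lfloor\partial\D}$ is a probability measure on $\partial\D$. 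Thus $\zeta_g$ is the (unique, as recalled above) zero of $F(\zeta):=\int_{\partial\D}\phi_\zeta\,d\mu$.

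Next I would extract the only consequence of the hypothesis that is used: since $\log|g'|$ is harmonic, the maximum principle yields $e^{-K}\le|g'|\le e^{K}$ on $\partial\D$ with $K=\Vert\log|g'|\Vert_{L^\infty(\D)}$, so the density of $\mu$ with respect to normalized arclength lies in $[c^{-1},c]$ with $c=c(K)=e^{2K}$. Because this class of densities is rotation invariant and $F(\rho e^{i\psi})=e^{i\psi}\int_{\partial\D}\phi_{\rho}\,d\mu_\psi$ for the rotated measure $\mu_\psi$ (obeying the same bounds), it suffices to prove that there is $r=r(c)\in[0,1)$ with $\mathrm{Re}\int_{\partial\D}\phi_\rho\,d\mu>0$ for every $\rho>r$ and every admissible $\mu$. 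This forces $F(\zeta)\neq0$ whenever $|\zeta|>r$, whence $|\zeta_g|\le r$. For the estimate itself, a direct computation gives, for $z=e^{i\theta}$,
\[
\mathrm{Re}\,\phi_\rho(e^{i\theta})=P_\rho(\theta):=\frac{(1+\rho^2)\cos\theta+2\rho}{1+2\rho\cos\theta+\rho^2},
\]
which satisfies $|P_\rho|\le1$ (as $\phi_\rho$ maps $\partial\D$ to itself) and is negative exactly on the arc $\{\cos\theta<-\tfrac{2\rho}{1+\rho^2}\}$, an interval around $\theta=\pi$ of length $O(1-\rho)$, while on the half-circle $\{\cos\theta\ge0\}$ one has $P_\rho\ge\tfrac{2\rho}{(1+\rho)^2}$, bounded below by a positive constant for $\rho\ge\tfrac12$. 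Splitting $\int P_\rho\,d\mu$ accordingly and using the density bounds, the bad arc contributes at least $-C\,c\,(1-\rho)$, the half-circle contributes at least a positive multiple of $c^{-1}$, and the rest of the integrand is nonnegative; hence $\mathrm{Re}\int_{\partial\D}\phi_\rho\,d\mu\ge a\,c^{-1}-C\,c\,(1-\rho)$, which is strictly positive once $1-\rho<\tfrac{a}{C\,c^2}$. Taking $r:=1-\tfrac{a}{C\,c(K)^2}$ concludes.

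The main obstacle is exactly this quantitative bookkeeping: near $\theta=\pi$ the integrand $P_\rho$ reaches $-1$, so one must show that the $\mu$-mass of the bad arc vanishes with $1-\rho$ fast enough to be dominated by the fixed positive mass on $\{\cos\theta\ge0\}$. This balance is the only point where the hypothesis enters, and it is what ties the threshold $r$ to $K$ through the density ratio $c(K)=e^{2K}$.
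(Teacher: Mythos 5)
Your proposal is correct and takes essentially the same route as the paper: like the paper, you identify $\zeta_g$ as the unique zero of the first-moment vector field $F(\zeta)$, use the two-sided bound $e^{-K}\leq\Theta\leq e^{K}$ coming from the $L^\infty$ hypothesis, and conclude by showing the radial component $\langle F(\zeta),\zeta\rangle$ is strictly positive once $|\zeta|$ exceeds some $r(K)<1$. The only difference is bookkeeping, since your integral is the paper's after the change of variables $z\mapsto\phi_\zeta(z)$: where the paper lets the positive Poisson-kernel part blow up against a negative part bounded by $\frac{\pi}{2}e^{K}$, you keep a fixed positive contribution of order $e^{-2K}$ against a negative bad arc of length $O(1-\rho)$, which has the small bonus of making the threshold $r=1-c\,e^{-4K}$ explicit.
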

\begin{proof}
We let $\Theta=|g'_{|\partial \D}|$, and $K=\Vert\log\left(|g'|\right)\Vert_{L^\infty(\D)}$. This means that $e^{-K}\leq \Theta\leq e^K$. We are interested in the quantity:
\begin{align*}
F(\zeta)&=\int_{\partial \D}|(g\circ \phi_{-\zeta})'(z)|zd\sigma\\
&=\int_{\partial \D}\Theta(\phi_{-\zeta}(z))\frac{1-|\zeta|^2}{|1-\overline{\zeta}z|^2}zd\sigma.
\end{align*}
To estimate $|\zeta_g|$, we use the following topological criteria: if $\langle F(\zeta),\zeta\rangle >0$ for all $\zeta\in \partial \D_r$, then $|\zeta_g|\leq r$. Let $r>0$, we estimate:
\begin{align*}
\frac{1}{1-r^2}\langle F(re^{it_0}),e^{it_0}\rangle &=\int_{\partial \D}\Theta(\phi_{-re^{it_0}}(z))\frac{\langle z,e^{it_0}\rangle}{|1-rz|^2}d\sigma\\
&= \int_{0}^{2\pi}\Theta(\phi_{-re^{it_0}}(e^{i(t_0+t)}))\frac{\cos(t)}{|1-re^{it}|^2}dt\\
&\geq  \int_{|t|\leq \pi/2}e^{-K}\frac{\cos(t)}{|1-re^{it}|^2}dt+ \int_{|t|\geq \pi/2}e^{K}\frac{\cos(t)}{|1-re^{it}|^2}dt\\
&\geq e^{-K}\int_{|t|\leq \pi/2}\frac{\cos(t)}{|1-re^{it}|^2}dt-\frac{\pi}{2}e^{K}\\
&\underset{r\rightarrow 1}{\longrightarrow}+\infty\text{ (uniformly in }t_0).
\end{align*}
This proves that for a certain $r\in ]0,1[$ that only depends on $K$, we have $|\zeta_g|\leq r$.
\end{proof}
\end{proof}

\section{Further remarks}

The study of the stability of $\Theta\mapsto\sigma_1(\D,\Theta)$ is independent of the fact that $\Theta=|g'|$ for a certain conformal map $g$. One could wonder if there is an equivalence between the study of the Steklov problem on domains $\Omega$ and the study of the weighted Steklov operator on the disk. We show here that it is the case provided we allow domains that may overlap, meaning they are immerged in the plane and are seen as images of holomorphic functions with non-vanishing derivative that are not necessarily globally injective. We also show that in our stability result, the domains cannot overlap when the first Steklov eigenvalue is close to that of the disk.

\begin{prop}
Let $\Theta\in\C^0(\partial \D,\mathbb{R}_+^*)$, then there exists an holomorphic function with non-vanishing derivative $g:\D\rightarrow \CC$ such that $|g_{|\partial \D}'|=\Theta$. Moreover, it is unique up to an affine isometry.
\end{prop}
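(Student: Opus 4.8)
The plan is to reduce the problem to the classical Dirichlet problem on the disk. Since $\D$ is simply connected and we seek $g'$ holomorphic and non-vanishing, I would first observe that any such $g'$ can be written as $g' = e^{h}$ for a holomorphic function $h : \D \to \CC$, and conversely every holomorphic $h$ produces an admissible non-vanishing derivative. Under this substitution the modulus becomes $|g'| = e^{\mathrm{Re}(h)}$, so the prescribed boundary condition $|g'_{|\partial\D}| = \Theta$ is equivalent to $\mathrm{Re}(h) = \log\Theta$ on $\partial\D$. As $\Theta$ is continuous and strictly positive, $\log\Theta$ is a well-defined real-valued continuous function on $\partial\D$, and the whole problem is now to find a holomorphic function on $\D$ whose real part has these prescribed continuous boundary values.

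For existence, I would let $u$ be the Poisson (harmonic) extension of $\log\Theta$ to $\D$; since $\log\Theta$ is continuous, $u$ extends continuously to $\overline{\D}$ with $u = \log\Theta$ on $\partial\D$. On the simply connected domain $\D$ the harmonic function $u$ admits a harmonic conjugate $v$, unique up to an additive real constant, so that $h = u + iv$ is holomorphic. Setting $g'(z) = e^{h(z)}$ gives a holomorphic, non-vanishing function, and $g(z) = \int_{0}^{z} e^{h(\zeta)}\, d\zeta$ (path-independent on $\D$) is the desired holomorphic map. Since $|g'| = e^{u}$ and $u$ is continuous up to the boundary with boundary value $\log\Theta$, the modulus $|g'|$ extends continuously to $\partial\D$ with value $\Theta$, which is exactly the claim.

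For uniqueness, suppose $g_1, g_2$ are two admissible maps and write $g_i' = e^{h_i}$. Then $\mathrm{Re}(h_1) = \mathrm{Re}(h_2) = \log\Theta$ on $\partial\D$, and since a harmonic function is determined by its continuous boundary data, $\mathrm{Re}(h_1) = \mathrm{Re}(h_2)$ throughout $\D$. Hence $h_1 - h_2$ is holomorphic with vanishing real part, so it is a purely imaginary constant $i\theta$; this yields $g_1' = e^{i\theta} g_2'$ and, after integration, $g_1 = e^{i\theta} g_2 + c$ for some $c \in \CC$. This is precisely an orientation-preserving affine isometry of the plane (orientation-reversing ones are excluded since $g$ must remain holomorphic), which establishes uniqueness up to affine isometry.

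I expect the only genuinely delicate point to be the exact sense in which the boundary condition is attained: because $\Theta$ is merely continuous, the harmonic conjugate $v$, and hence the argument of $g'$, may fail to have continuous boundary values, so the statement must be read as a statement about $|g'|$ alone rather than about $g'$ itself. Once this is acknowledged, everything rests on standard potential theory on the disk.
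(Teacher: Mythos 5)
Your proof is correct and takes essentially the same route as the paper: existence via the harmonic (Poisson) extension of $\log\Theta$, a harmonic conjugate on the simply connected disk, exponentiation and integration, and uniqueness via the fact that a harmonic function is determined by its boundary values, forcing $h_1-h_2$ to be an imaginary constant and hence $g_1=e^{i\theta}g_2+c$. Your closing remark on the precise sense in which the boundary condition is attained (for $|g'|$, not for $g'$ itself) is a sound refinement that the paper leaves implicit.
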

\begin{proof}
Let $u$ be the harmonic extension of $\log(\Theta)$ on $\D$. Let $v$ be a harmonic conjugate of $u$ (it can be unique if we fix $v(0)=0$). Then take $g$ as an integral of $e^{u+iv}$ (it exists because $\D$ is simply connected). Then $|g'|=e^u$, which is equal to $\Theta$ on $\partial\D$.\medskip

For the uniqueness, consider $f$ and $g$ two such functions, then $\log(|f'|)$ and $\log(|g'|)$ are two harmonic functions that coincide on $\partial\D$, so they are the same. Then $\text{Im}(\log(f'))$ and $\text{Im}(\log(g'))$ (which are defined up to a constant in $2\pi\mathbb{Z}$) are conjugated to the same harmonic functions, so they differ by a constant.
\end{proof}

Here is a criterium to get a domain that does not overlap, meaning that the function $g$ defined above is injective:
\begin{prop}\label{overlap}
Let $\Theta\in \C^0(\partial  \D,\mathbb{R}_+^*)$. If $\Vert \Theta -1 \Vert_{L^\infty(\partial\D)}\leq \frac{1}{5}$, then $\Theta$ defines a domain that does not overlap.
\end{prop}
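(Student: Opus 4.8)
The plan is to show that the holomorphic map $g\colon\D\to\CC$ produced by the previous proposition (the one with $g'\neq0$ and $|g'_{|\partial\D}|=\Theta$) is globally injective, since ``does not overlap'' means precisely that this $g$ is univalent. Writing $\epsilon=\Vert\Theta-1\Vert_{L^\infty(\partial\D)}\le\frac15$, I would first observe that $\log|g'|$ is the harmonic extension of $\log\Theta$, so the maximum principle gives $1-\epsilon\le|g'|\le1+\epsilon$ throughout $\D$. The natural temptation is then to control $\arg g'$ and invoke a half-plane (Noshiro--Warschawski) criterion, but this is exactly where the naive route fails: the harmonic conjugate of a merely continuous (or $L^\infty$) function need not be bounded near $\partial\D$, so $\arg g'$ may oscillate unboundedly and one cannot keep $g'$ in a fixed half-plane up to the boundary. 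The correct tool must absorb this boundary blow-up.

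Second, I would reuse the Bloch--Landau estimate already established in the proof of the Hausdorff-distance lemma. Since the image of $g'$ lies in the annulus $\{1-\epsilon\le|w|\le1+\epsilon\}$, which contains no disk of radius larger than $\epsilon$, applying Bloch's theorem to $g'$ on each disk $\D_{z,1-|z|}\subset\D$ yields $L|g''(z)|(1-|z|)\le\epsilon$, hence
\[
|g''(z)|\le\frac{\epsilon}{L(1-|z|)},\qquad L\ge\tfrac12 .
\]
This depends only on the fact that $g$ is holomorphic with $|g'|\in[1-\epsilon,1+\epsilon]$ on $\D$, so it applies verbatim to the present (a priori only locally injective) $g$.

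Third, and this is the heart of the argument, I would feed this into Becker's univalence criterion: a locally univalent holomorphic function on $\D$ satisfying $\sup_{z\in\D}(1-|z|^2)\,|g''(z)/g'(z)|\le1$ is univalent. Combining the two bounds,
\[
(1-|z|^2)\left|\frac{g''(z)}{g'(z)}\right|\le(1+|z|)\,\frac{\epsilon}{L(1-\epsilon)}\le\frac{2\epsilon}{L(1-\epsilon)}\le\frac{4\epsilon}{1-\epsilon}\le1,
\]
where the last inequality is equivalent to $5\epsilon\le1$, i.e.\ to the hypothesis $\epsilon\le\frac15$. Since $g'$ is non-vanishing, $g$ is locally univalent, so Becker's criterion applies and $g$ is injective; thus $\Theta$ defines a non-overlapping domain.

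The conceptual point, and the main obstacle, is selecting the right univalence criterion: the hyperbolically weighted quantity $(1-|z|^2)|g''/g'|$ is precisely what neutralizes the $\frac{1}{1-|z|}$ singularity coming from Bloch's theorem near the boundary, and the numerology $\frac{4\epsilon}{1-\epsilon}\le1$ is exactly what pins down the threshold $\frac15$ (using the conservative value $L=\frac12$). A secondary point worth checking is that the Bloch estimate is genuinely valid for the merely locally injective $g$ here, which it is, as it uses only the modulus bound $|g'|\in[1-\epsilon,1+\epsilon]$ obtained from the maximum principle.
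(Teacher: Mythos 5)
Your proposal is correct and follows essentially the same route as the paper's proof: the maximum principle bound $1-\epsilon\leq |g'|\leq 1+\epsilon$ on $\D$, the Bloch--Landau estimate $|g''(z)|\leq \frac{\epsilon}{L(1-|z|)}$ from the annulus containing $g'(\D)$, and Becker's univalence criterion (the paper cites it as Theorem 1.11 of Pommerenke, stated with the harmless extra factor $|z|$, which your slightly stronger bound without $|z|$ covers), with the identical numerology $\frac{2\epsilon}{L(1-\epsilon)}\leq 1$ and $L\geq \frac{1}{2}$ yielding the threshold $\epsilon\leq\frac{1}{5}$. The only differences are expository (your aside on why a Noshiro--Warschawski half-plane argument would fail), not mathematical.
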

\begin{rem}
In particular, for all $\alpha\in ]0,1[$, $K>0$, there exists $\delta>0$ such that if $g:\D\rightarrow \CC$ is a holomorphic function with non-vanishing derivative that verifies:
\begin{itemize}[label=\textbullet]
\item $\int_{\partial \D}|g'|d\sigma=2\pi$,
\item $\Vert \log\left(|g'|\right)\Vert_{\C^{0,\alpha}(\D)}\leq K$,
\item $\sigma_1(\D,|g'_{|\partial\D}|)\geq 1-\delta$.
\end{itemize}
Then $g$ is injective and defines a domains that does not overlap. Indeed, Proposition \ref{EstimateCa} shows that $\Vert |g'|-1\Vert_{L^\infty(\partial\D)}\leq C(K,\alpha)\delta^{2(1+\alpha^{-1})}$, which is less than $\frac{1}{5}$ when $\delta$ is small enough.
\end{rem}
\begin{proof}
Write $\Vert \Theta -1 \Vert_{L^\infty(\partial\D)}=\epsilon$, $g$ a conformal map such that $|g'_{|\partial\D}|=\Theta$. We want to show that $g$ is univalent. We use Theorem 1.11 of \cite{Pom92}; it is enough to show that for all $z\in \D$:
\[(1-|z|^2)\left|z\frac{g''(z)}{g'(z)}\right|\leq 1.\]
Since $||g'|-1|\leq \epsilon$, it means that $g'(\D)$ contains no disk with larger radius than $\epsilon$, and so, using Bloch's theorem:
\[L(1-|z|)|g''(z)|\leq \epsilon.\]
Thus we can verify the univalence criteria:
\[(1-|z|^2)\left|z\frac{g''(z)}{g'(z)}\right|\leq (1-|z|^2)\frac{\epsilon}{L(1-|z|)(1-\epsilon)}\leq\frac{2\epsilon}{L(1-\epsilon)}.\]
This is less than $1$ when $\epsilon\leq \frac{L}{2+L}$. Since it has been show that $L\geq \frac{1}{2}$, the above is true as soon as $\epsilon\leq \frac{1}{5}$.
\end{proof}
\medskip
\noindent{\bf Sharpness of the stability result.} One could wonder if an a priori bound on $\Vert \log\left(|g'|\right)\Vert_{L^\infty(\partial\D)}$ is enough to obtain stability. We prove below that this is not the case.

\begin{prop}
Let $\Omega$ be the image of a smooth conformal map $g\in\C^1(\overline{\D})$ such that
\[\max_{\partial\D}|g'|< \frac{4}{\pi}\min_{\partial\D}|g'|.\]
Then there exists a sequence of domains $\Omega_n=g_n(\D)$ with $g_n\in \C^1(\overline{\D})$ such that 
\begin{align*}
 &|\partial\Omega_n|\sigma_k(\Omega_n)\underset{n\rightarrow\infty}{\longrightarrow} |\partial\D|\sigma_k(\D),\text{ for all }k\geq 0,\\
 &\Omega_n\underset{n\rightarrow\infty}{\longrightarrow}\Omega\text{ uniformly},\\
 &\sup_{n\in\mathbb{N}}\Vert \log |g_n'|\Vert_{\C^0(\D)}<\infty.\\
\end{align*}
\end{prop}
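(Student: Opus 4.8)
The plan is to work entirely on the disk through the conformal weight $\Theta_n=|g_n'|$. By the conformal invariance lemma one has $\sigma_k(\Omega_n)=\sigma_k(\D,|g_n'|)$ and $|\partial\Omega_n|=\int_{\partial\D}|g_n'|\,d\sigma$ (understood in the overlapping sense of this section, so injectivity of $g_n$ need not be checked). Since for a constant weight $c$ one has $\sigma_k(\D,c)=\tfrac1c\sigma_k(\D)$ and $\int_{\partial\D}c\,d\sigma=2\pi c$, the desired limit $|\partial\Omega_n|\sigma_k(\Omega_n)\to2\pi\sigma_k(\D)$ will follow from the weak-$*$ homogenization propositions of this section as soon as I produce holomorphic maps $g_n$ with non-vanishing derivative such that (i) $\log|g_n'|$ is bounded uniformly in $n$; (ii) $g_n'\rightharpoonup g'$ weak-$*$, which yields $\Omega_n\to\Omega$ uniformly by uniform convergence of the equi-Lipschitz primitives $t\mapsto g_n(e^{it})$; and (iii) $|g_n'|\rightharpoonup c$ weak-$*$ for some constant $c$, which yields $\sigma_k(\D,|g_n'|)\to\tfrac1c\sigma_k(\D)$ and $\int_{\partial\D}|g_n'|\to2\pi c$. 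Conditions (ii) and (iii) are compatible precisely because modulus does not commute with weak limits.

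The building block I would use is $g_n'(z)=g'(z)\big(1+z^n\psi(z)\big)$ with $\psi$ holomorphic and $\|\psi\|_{L^\infty(\partial\D)}<1$. Writing $\psi=\epsilon e^{i\beta}$ on $\partial\D$, the modulus $|1+\epsilon(t)e^{i(nt+\beta(t))}|$ oscillates at scale $1/n$ with slowly varying profile, and its mean over one fast period is $\Phi(\epsilon(t))$, where $\Phi(\epsilon)=\frac1{2\pi}\int_0^{2\pi}|1+\epsilon e^{i\theta}|\,d\theta$. This $\Phi$ is increasing with $\Phi(0)=1$, and since $|1+e^{i\theta}|=2|\cos(\theta/2)|$ one finds $\Phi(\epsilon)\nearrow\frac1{2\pi}\int_0^{2\pi}2|\cos(\theta/2)|\,d\theta=\frac4\pi$ as $\epsilon\to1^-$; this is exactly where the threshold enters. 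Using the hypothesis $\max_{\partial\D}|g'|<\frac4\pi\min_{\partial\D}|g'|$, I pick a constant $c$ with $\max|g'|<c<\frac4\pi\min|g'|$, so that $c/|g'(e^{it})|\in(1,4/\pi)$ for every $t$, and I set $\epsilon(t)=\Phi^{-1}\!\big(c/|g'(e^{it})|\big)$, which takes values in a compact subinterval $[\epsilon_1,\epsilon_0]\subset(0,1)$. Finally I let $\psi$ be the outer function with boundary modulus $\epsilon$.

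It then remains to verify (i)--(iii). For (i): by the maximum principle $|\psi|\le\epsilon_0<1$ on $\overline\D$, so $1+z^n\psi$ never vanishes and $|1+z^n\psi|\in[1-\epsilon_0,1+\epsilon_0]$, giving a uniform bound on $\log|g_n'|=\log|g'|+\log|1+z^n\psi|$ (here $\log|g'|$ is bounded because $g$ is a smooth conformal map with $g'$ non-vanishing on $\overline\D$). For (ii): $g_n'-g'=g'z^n\psi$, and testing against any $\varphi\in\C^0(\partial\D)$ gives $\int_{\partial\D}\varphi\,g'\,\psi\,e^{int}\,d\sigma\to0$ by Riemann--Lebesgue, whence $g_n'\rightharpoonup g'$ and $\partial\Omega_n\to\partial\Omega$ uniformly. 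For (iii): from $|g_n'|=|g'|\cdot|1+\epsilon(t)e^{i(nt+\beta(t))}|$ the two-scale averaging lemma yields $|g_n'|\rightharpoonup|g'|\,\Phi(\epsilon(\cdot))=c$ weak-$*$, the constant being exactly $c$ by the choice of $\epsilon$.

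The main technical obstacle is step (iii): one must justify that the nonlinear oscillatory modulus $|1+\epsilon(t)e^{i(nt+\beta(t))}|$ converges weakly to its fast-variable average $\Phi(\epsilon(t))$ in spite of the slow modulation of both amplitude $\epsilon$ and phase $\beta$. This is the only point where the two scales genuinely interact; it is handled by the standard averaging argument, freezing $\epsilon$ and $\beta$ on arcs where they vary little and invoking equidistribution of $nt\bmod2\pi$. A secondary point is regularity: to guarantee $g_n\in\C^1(\overline\D)$ one needs $\psi\in\C^0(\overline\D)$, which holds because $\epsilon$ is as smooth as $|g'|$ and bounded away from $0$ and $1$, so its outer function extends continuously to the boundary. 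I emphasize that the strict inequality $\max|g'|<\frac4\pi\min|g'|$ is used exactly to keep $\epsilon_0<1$, and hence to keep $\log|g_n'|$ uniformly bounded; as the ratio tends to $4/\pi$ the required amplitude tends to $1$ and this $\C^0$ bound is lost.
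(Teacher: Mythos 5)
Your construction is, up to an $\mathcal{O}(1/n)$ term, exactly the paper's: your averaging function $\Phi$ is the paper's $P$ (since $|1+\epsilon e^{i\theta}|=\sqrt{1+\epsilon^2+2\epsilon\cos\theta}$), your outer function $\psi$ with boundary modulus $\Phi^{-1}(c/|g'|)$ is the paper's $f=e^{u+iv}$ with $|f|=|g'|P^{-1}(\Lambda/|g'|)$, and your $g_n'=g'(1+z^n\psi)$ matches the paper's $g_n=g+\frac{z^{n+1}}{n+1}f$, whose derivative is $g'+z^nf+\mathcal{O}(1/n)$ with $f=g'\psi$. The transfer mechanism, however, is genuinely different: you pull everything back to the \emph{fixed} disk and apply the homogenization proposition with $\Omega_\epsilon=\D$ and oscillating weights $\Theta_n=|g_n'|\rightharpoonup c$, whereas the paper applies homogenization to the \emph{moving} domains $\Omega_n\to\Omega$ with unit weight and only afterwards uses conformal invariance $\sigma_k(\Omega,\Theta)=\sigma_k(\D,\Lambda)=\Lambda^{-1}\sigma_k(\D)$. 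Your route would be cleaner if legitimate, since on a fixed domain the uniform cone condition is trivial; your honest flagging of the two-scale averaging step is at the same level of rigor as the paper's own lemma, so that is not where the problem lies.

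The genuine gap is your dismissal of injectivity: ``understood in the overlapping sense of this section, so injectivity of $g_n$ need not be checked.'' The proposition asserts convergence of the perimeter-normalized Steklov eigenvalues of the \emph{actual} domains $\Omega_n=g_n(\D)$, and the identities $\sigma_k(\Omega_n)=\sigma_k(\D,|g_n'|)$ and $|\partial\Omega_n|=\int_{\partial\D}|g_n'|\,d\sigma$ on which your whole reduction rests are only justified (via the conformal invariance lemma) when $g_n$ is univalent; if $g_n$ overlapped, $g_n(\D)$ would be a different open set whose Steklov spectrum has no a priori relation to the weighted problem, and the statement would not be the instability result intended. You cannot invoke Proposition \ref{overlap} here, because its hypothesis $\Vert\Theta-1\Vert_{L^\infty(\partial\D)}\leq\frac15$ fails: your $\epsilon_0=\Phi^{-1}(c/\min_{\partial\D}|g'|)$ tends to $1$ as the ratio $\max|g'|/\min|g'|$ approaches $4/\pi$. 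Relatedly, in this paper ``$\Omega_n\to\Omega$ uniformly'' is a defined notion that includes the uniform cone condition, which weak-$*$ convergence of $g_n'$ plus equi-Lipschitz primitives does not deliver. Both defects are repaired by precisely the boundary-graph argument that constitutes the bulk of the paper's proof: since $|(c_n-c)'|=|g'||\psi|\leq\epsilon_0|c'|$ on $\partial\D$ with $\epsilon_0<1$ (the paper's condition $|(c_n-c)'|<(1-\eta)|c'|$ with $\eta=1-\epsilon_0$), one gets $|\langle c_n',e_i\rangle|\geq(\eta-(1-\eta)\delta)|\langle c',e_i\rangle|>0$ in each covering square, so $\partial\Omega_n$ is locally a uniformly Lipschitz graph; this yields the cone condition, the simplicity of the boundary curve for large $n$, and then univalence of $g_n$ by the argument principle. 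With that verification inserted, your proof closes; without it, the conformal-transfer step is unjustified exactly at the point where your approach diverges from the paper's.
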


\begin{proof}
Consider $f$ an holomorphic function defined on $\D$ that never takes the value $0$ and such that $|f|<|g'|$ on $\partial\D$. Consider:
\[g_n(z)=g(z)+\frac{z^{n+1}}{n+1}f(z).\]
We show that, at least for $n$ large enough, this defines a sequence of domains that converges uniformly to $\Omega$; we need to check the uniform cone condition. Since $\Omega$ is $\C^1$, there exists a finite number of squares that cover $\partial\Omega$, called $(C_i)$ and oriented by the orthonormal basis $(e_i,f_i)$ such that:
\begin{align*}
C_i&=\lbrace p_i+te_i+sf_i,\ t,s\in [-r_i,r_i]  \rbrace,\\
\partial\Omega\cap C_i&=\lbrace p_i+te_i+h_i(t)f_i,\ t\in [-r_i,r_i]  \rbrace.\\
\end{align*}
Where $\Vert h_i\Vert_{\C^1(\partial\D)}\leq \delta$ for a certain $\delta$ that can be chosen arbitrarily small. We also write $c(t)=g(e^{it})$ and $c_n(t)=g_n(e^{it})$, as well as $I_i=c^{-1}(C_i)$. Then, for all $t\in I_i$ we have
\[\left|\frac{\langle c',f_i\rangle}{\langle c',e_i\rangle}\right|\leq \delta.\]
For a large enough $n$, there exists $\eta>0$ such that $|(c_n-c)'|<(1-\eta)|c'|$, because of the condition that $|f|<|g'|$ (any $\eta$ smaller than $\inf\frac{|g'|-|f|}{|g'|}$ works for $n$ large enough). In particular, $|c_n'|\leq 2|c'|$ and:
\begin{align*}
\left|\langle c_n',e_i\rangle\right|&\geq\left|\langle c',e_i\rangle\right|-\left|c_n'-c'\right|\\
&\geq\left|\langle c',e_i\rangle\right|-(1-\eta)|c'|\\
&\geq\left|\langle c',e_i\rangle\right|-(1-\eta)|\langle c',e_i\rangle|-(1-\eta)|\langle c',f_i\rangle|\\
&= \left(\eta-(1-\eta)\delta\right) \left|\langle c',e_i\rangle\right|.\\
\end{align*}
We choose $\delta\leq\frac{\eta}{2(1-\eta)}$; with this:
\[\left|\frac{\langle c_n',f_i\rangle}{\langle c_n',e_i\rangle}\right|\leq \frac{4}{\eta}\frac{|c'|}{|\langle c',e_i\rangle|}.\]
And so, letting $M=\max_{i}\left(\frac{4}{\eta}\max_{I_i}\frac{|c'|}{|\langle c',e_i\rangle|}\right)$, there exists a sequence of $M$-Lipschitz functions $(h_{i,n})$ such that for all $i$:
\[\partial\Omega_n \cap C_i=\lbrace p_i+te_i+h_{i,n}(t)f_i,\ t\in [-r_i,r_i]  \rbrace.\]
Which proves that the sequence $(\Omega_n)$ verifies a uniform cone condition.

\begin{lem}
 Under these circumstances:
\[\Hl_{\lfloor \partial \Omega_n}\underset{n\rightarrow\infty}{\rau}\Theta\Hl_{\lfloor \partial \Omega}\text{ weakly-$*$ in the sense of measures},\]
where $\Theta=P\left(\left|\frac{f\circ g^{-1}}{g'\circ g^{-1}}\right|\right)$, with:
\[P(a):=\fint_0^{2\pi}\sqrt{1+a^2+2a\cos(t)}dt.\]
Moreover, $P$ is a strictly increasing function with $P(0)=1$ and $P(1)=\frac{4}{\pi}$.
\end{lem}
\begin{proof}
We already know that $g_n$ converges uniformly to $g$, and so it is easier to prove the lemma on the pullback of these measures through $g$ and $g_n$ on $\partial \D$. Let $\varphi\in\C^0(\partial\D,\R)$, we search for the limit of $\int_{\partial\D}\varphi |g_n'|d\sigma$. We compute:
\begin{align*}
\int_{\partial\D}\varphi |g_n'|d\sigma&=\int_{\partial\D}\varphi |g'+z^n f+\mathcal{O}(1/n)|d\sigma\\
&=\int_{\partial\D}\varphi |g'+z^n f|d\sigma+\mathcal{O}(1/n)\\
&=\int_{\partial\D}\varphi |g'|\left|1+z^n \frac{f}{g'}\right|d\sigma+\mathcal{O}(1/n)\\
&\underset{n\rightarrow\infty}{\longrightarrow}\int_{\partial\D}\varphi |g'|P\left(\left|\frac{f}{g'}\right|\right)d\sigma.
\end{align*}
Where we used the fact that for any continuous and $2\pi$-periodic function $\psi$, $\psi(n\cdot)$ converges weakly-* to $\fint \psi$.
\end{proof}

This means that, in order to have $|\partial\Omega_n|\sigma_1(\Omega_n)\rightarrow 2\pi$, we need to chose $f$ such that $\left(\ds \int_{\partial\Omega}\Theta d\sigma\right)\sigma_1(\Omega,\Theta)=\left(\ds \int_{\partial \D}g^*\Theta d\sigma\right)\sigma_1(\D,g^*\Theta)$ is equal to $2\pi$, which is the case of equality in Weinstock's inequality. In other words, we need a constant $\Lambda>0$ such that $g^*\Theta=\Lambda$, which is equivalent to:
\[P\left(\left|\frac{f}{g'}\right|\right)=\frac{\Lambda}{|g'|}.\]
This is only possible when $\frac{\Lambda}{|g'|}$ takes value in $]1,\frac{4}{\pi}[$, and so there exists such a constant $\Lambda$ if and only if $\max_{\partial\D}|g'|< \frac{4}{\pi}\min_{\partial\D}|g'|$. Fix such a $\Lambda$. Then the previous relation can be written:
\[|f|=|g'|P^{-1}\left(\frac{\Lambda}{|g'|}\right).\]
This incites us to define $f$ as $f=e^{u+iv}$, where $u=\mathcal{H}\log\left(|g'|P^{-1}\left(\frac{\Lambda}{|g'|}\right)\right)$ and $v$ is an harmonic conjugate of $u$. With this definition, since $0<P^{-1}<1$, we get that $0<|f|<|g'|$ on $\partial\D$, and with the maximum principle we get $0<|f|<|g'|$ on $\D$. With $f$ defined as such, we obtain that for all $k\geq 0$:
 \[|\partial\Omega_n|\sigma_k(\Omega_n)\underset{n\rightarrow\infty}{\longrightarrow}\left(\int_{\partial\Omega}\Theta d\sigma \right)\sigma_k(\Omega,\Theta)=\left(\int_{\partial \D}\Lambda d\sigma \right)\sigma_k(\D,\Lambda)=|\partial\D|\sigma_k(\D).\]
\end{proof}

\begin{rem}
A consequence of this instability result is the following: there is no inequality of the form:
\[\overline{d_H}(g(\D),\D)\leq \epsilon\left(\Vert |g'|-1\Vert_{H^{-\frac{1}{2}}(\partial\D)}^{\lambda}\Vert |g'|-1\Vert_{L^\infty(\partial\D)}^{1-\lambda}\right),\]
where $\lambda\in ]0,1]$, $\epsilon(t)\underset{t\rightarrow 0}{\longrightarrow} 0$, and $g$ is any conformal map. Indeed, such an inequality would lead to a stability result with a priori bound in $L^\infty$, which was shown to fail.
\end{rem}

\noindent{\bf Lower bound on the exponent.}
We now compute the spectrum on the special case where $\Theta-1$ is a sine function: this help us prove that the optimal exponent of stability is no less than $2$, whereas the exponent we obtain in our result is $2(1+\alpha^{-1})$.

\begin{lem}\label{EstimateAbove}
Let $\alpha\in ]0,1[$, $N\geq 4$, and $\Theta\left(e^{it}\right)=1+\alpha\cos(Nt)$. Then we have the estimate:
\[\frac{\alpha^2}{CN}\leq 
\deff(\Theta)\leq \frac{\alpha^2}{N-3},\]
where $C$ is a constant that does not depend on $N$ or $\alpha$.
\end{lem}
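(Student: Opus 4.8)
The plan is to pass to the boundary via harmonic extension and read everything off the Fourier coefficients $c_n=\widehat f(n)$. Writing $\mathcal{D}(f)=\int_{\D}|\nabla\mathcal{H}f|^2\,dx=2\pi\sum_{n\neq 0}|n|\,|c_n|^2$ and $\mathcal{B}(f)=\int_{\partial\D}\Theta f^2\,d\sigma$, the variational characterisation of $\sigma_1(\D,\Theta)$ (over traces $f$ with $\int_{\partial\D}\Theta f=0$, the infimum being attained at harmonic $u$) gives $\deff(\Theta)=\sup_f\frac{\mathcal{B}(f)-\mathcal{D}(f)}{\mathcal{D}(f)}$ over admissible $f$. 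Since $\Theta=1+\alpha\cos(N\cdot)$, one has $\mathcal{B}(f)=\int_{\partial\D}f^2+\alpha\int_{\partial\D}\cos(Nt)f^2$, the factor $\cos(Nt)$ couples mode $n$ only to modes $n\pm N$, and the constraint couples only the zero mode to the $\pm N$ modes: $c_0=-\tfrac{\alpha}{2}(c_N+c_{-N})=-\tfrac\alpha2 a_N$, with $a_N:=c_N+c_{-N}=2\,\mathrm{Re}\,c_N$. Both bounds come from analysing the quadratic form $\mathcal{B}-\mathcal{D}$.

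For the lower bound I would simply exhibit a test function: take $f=\cos t+\beta_-\cos((N-1)t)+\beta_+\cos((N+1)t)$, which has no zero- nor $N$-th mode and so is automatically admissible. A direct computation gives $\mathcal{D}(f)=\pi[1+(N-1)\beta_-^2+(N+1)\beta_+^2]$ and $\mathcal{B}(f)-\mathcal{D}(f)=\pi[\alpha(\beta_-+\beta_+)-(N-2)\beta_-^2-N\beta_+^2]$. Optimising the numerator with $\beta_-=\frac{\alpha}{2(N-2)}$, $\beta_+=\frac{\alpha}{2N}$ makes it $\frac{\alpha^2}{4}\big(\frac1{N-2}+\frac1N\big)\geq\frac{\alpha^2}{4N}$, while for $N\geq4$, $\alpha\leq1$ the denominator stays bounded by an absolute constant; hence $\deff(\Theta)\geq\frac{\alpha^2}{CN}$.

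The upper bound is the substantive direction, as it must control \emph{every} admissible $f$. Expanding in the $c_n$,
\[\mathcal{B}(f)-\mathcal{D}(f)=2\pi|c_0|^2-2\pi\sum_{|n|\geq2}(|n|-1)|c_n|^2+2\alpha\pi\,\mathrm{Re}\sum_m c_m c_{N-m}.\]
The only term in $\sum_m c_m c_{N-m}$ touching the uncontrolled zero mode is $2c_0c_N$; using $c_0=-\frac\alpha2 a_N$ one checks that its real part combines with $2\pi|c_0|^2$ into $-\frac{\pi\alpha^2}{2}a_N^2\leq0$, which may be dropped. It then remains to dominate $2\alpha\pi\,\mathrm{Re}\sum_{m\neq0,N}c_mc_{N-m}$ by $2\pi\sum_{|n|\geq2}(|n|-1)|c_n|^2+\frac{\alpha^2}{N-3}\mathcal{D}(f)$. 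The only ``dangerous'' terms are those pairing the unit-weight modes $c_{\pm1}$ with $c_{N\mp1}$, namely $2c_1c_{N-1}+2c_{-1}c_{N+1}$; all others pair two modes of weight $\geq2$. For the dangerous terms I would use the asymmetric weighted AM--GM $2\alpha|c_1||c_{N-1}|\leq\frac{\alpha^2}{N-3}|c_1|^2+(N-3)|c_{N-1}|^2$ (and the analogue with weight $N$ for the pair $c_{-1},c_{N+1}$): the first summand is exactly what $\frac{\alpha^2}{N-3}\mathcal{D}$ allows for the unit-weight mode, while the second, $(N-3)|c_{N-1}|^2$ (resp. $N|c_{N+1}|^2$), is absorbed by the surplus weight $N-2$ (resp. $N$) of the high mode inside $-2\pi\sum(|n|-1)|c_n|^2$. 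The remaining ``safe'' terms are absorbed straight into that negative sum using $\alpha\leq1$ and $|n|\geq2$, and one checks $c_{N\pm1}$ never reappear there, so no weight is double-charged.

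The main obstacle is exactly this last step. Mode $\pm1$ carries the minimal Dirichlet weight $1$ and cannot be penalised, so a symmetric Cauchy--Schwarz on $\sum c_mc_{N-m}$ only yields a constant of order $N^{-1/2}$ (the geometric mean $\sqrt{|m|\,|N-m|}$ is as small as $\sqrt{N-1}$). Getting the sharp $N^{-1}$ forces the asymmetric split above, charging essentially the whole spectral cost of the coupling to the high modes $N\pm1$ and leaving the free mode to saturate the $\frac{\alpha^2}{N-3}$ budget; the requirement that the high modes' surplus weights $N-2,N$ suffice is what pins down the exact constant $N-3$ and needs $N\geq4$.
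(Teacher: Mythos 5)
Your proof is correct, and it departs from the paper's in one of its two halves. For the upper bound you and the paper run on the same engine --- an asymmetric Young inequality on the Fourier-side quadratic form with parameter of order $\alpha/(N-3)$ --- but with different bookkeeping: the paper splits the modes into the blocks $|k|\le N/2$ and $|k|>N/2$, charges all low modes uniformly with $1+\alpha\epsilon$ and all high modes with $2+\alpha/(2\epsilon)=\frac{N+1}{2}\le |n|$ after choosing $\epsilon=\frac{\alpha}{N-3}$, and simply discards the nonpositive constraint term $-\big|\sum_{n\neq 0}c_n\overline{\Theta_n}\big|^2$; you instead exploit the exact pairing $j\leftrightarrow N-j$ induced by the single harmonic $\cos(Nt)$, note that each mode occurs in exactly one pair (so nothing is double-charged, which the block argument secures at the price of wasted slack), and keep the constraint term, showing it combines with $2\pi|c_0|^2$ into $-\frac{\pi\alpha^2}{2}a_N^2\le 0$ --- an exact cancellation where the paper drops a negative quantity. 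Both routes land on the same constant $\frac{\alpha^2}{N-3}$, and your tight charges ($N-3$ on mode $N-1$ against its surplus $N-2$, and $N$ on mode $N+1$ against exactly $N$) correctly use $N\ge 4$ and the fact that $c_{N\pm 1}$ occur in no safe pair, which you flag. The lower bound is where you genuinely diverge: the paper deduces it from Proposition \ref{EstimationSobolev}, first checking $\deff(\Theta)\le 1$ via the upper bound so the proposition applies, then using $\frac{\alpha}{2}=|\widehat{\Theta}(N)|\le \sqrt{N}\,\Vert\Theta-1\Vert_{H^{-1/2}(\partial\D)}\le C\sqrt{N\deff(\Theta)}$; you instead exhibit the explicit admissible test function $\cos t+\beta_-\cos((N-1)t)+\beta_+\cos((N+1)t)$ (admissible since it has no $0$- or $\pm N$-modes) and optimize $\beta_\pm\sim\frac{\alpha}{2N}$, which checks out: the numerator is at least of order $\pi\alpha^2/(2N)$ against a denominator bounded by an absolute constant. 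Your version is more elementary and self-contained, with an explicit constant independent of the earlier analysis; what it does not exhibit is the structural point of the paper's route, namely that this lemma is precisely a sharpness test for the $H^{-1/2}$ stability estimate --- the paper's constant $C$ is by construction that of Proposition \ref{EstimationSobolev}, so the lemma simultaneously certifies that the square-root exponent there cannot be improved, a link invisible in the test-function argument.
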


\begin{proof}
We begin with the second inequality. Let $u\in\C^\infty(\partial\D)$ be a finite sum of trigonometric functions, we call $c_n$ its Fourier serie (it has finite support), and $(\Theta_n)$ the fourier serie of $\Theta$. Then the harmonic extension of $u$ is given by
\[\mathcal{H}u(re^{it})=\sum_{n\in\mathbb{Z}}c_n r^{|n|}e^{int}\]
We can compute explicitly
\begin{align*}
\int\limits_{D}|\nabla\mathcal{H}u|^2dx &=2\pi\sum_{n\in\mathbb{Z}^{*}}|nc_n^2|,\\
\int\limits_{\partial\D}\Theta u^2d\sigma&=2\pi\sum_{k,l\in\mathbb{Z}}c_k\overline{c_l}\Theta_{l-k},\\
\int\limits_{\partial\D}\Theta ud\sigma =&2\pi\sum_{n\in\mathbb{Z}}c_n\overline{\Theta_k}.
\end{align*}
We are only interested in test functions such that $\int\limits_{\partial\D}\Theta u=0$: we will suppose from now on that
\[c_0=-\sum_{n\in\mathbb{Z}^*}c_n\overline{\Theta_k}.\]
$\deff(\Theta)$ verifies, for all sequence $(c_n)_{n\in\mathbb{Z}}$ that verify the above condition on $c_0$, that:
\[(1+\deff(\Theta))\sum_{n\in\mathbb{Z}^{*}}|nc_n^2|\geq \sum_{k,l\in\mathbb{Z}}c_k\overline{c_l}\Theta_{l-k}.\]
The value $c_0$ appears in the right-hand side, and we may replace it with $-\sum_{n\in\mathbb{Z}^*}c_n\overline{\Theta_k}$. We get that for all sequence $(c_n)_{n\in\mathbb{Z}^*}$:
\[(1+\deff(\Theta))\sum_{n\in\mathbb{Z}^{*}}|nc_n^2|\geq \sum_{k,l\in\mathbb{Z}^*}c_k\overline{c_l}\Theta_{l-k}-|\sum_{n\in\mathbb{Z}^*}c_n\overline{\Theta_n}|^2.\]
From this we want information on $(\Theta_n)$;  if we find a number $\delta$ such that for all finitely supported sequence $(c_k)_{k\neq 0}$, we have:
\[ \sum_{k,l\in\mathbb{Z}^*}c_k\overline{c_l}\Theta_{l-k}\leq (1+\delta)\sum_{n\in\mathbb{Z}^{*}}|nc_n^2|,\]
then $\deff(\Theta)\leq\delta$.\bigbreak

Here $\Theta_n=\begin{cases}1\text{ for }n=0\\ \alpha/2 \text{ for }n=\pm N.\\ 0 \text{ elsewhere }\end{cases}$\\
We write, for a parameter $\epsilon>0$ that we will choose later:
\begin{align*}
\sum_{k,l\in\mathbb{Z}^*}c_k\overline{c_l}\Theta_{l-k}&=\sum_{n\in\mathbb{Z}^*}|c_n|^2+\frac{\alpha}{2}\sum_{k,l\in\mathbb{Z}^*,\ |k-l|=N}c_k\overline{c_l}\\
&\leq \sum_{n\in\mathbb{Z}^*}|c_n|^2+\alpha\sum_{k,l\in\mathbb{Z}^*,\ |k-l|=N,\ |k|\leq N/2}|c_k c_l|+\frac \alpha 2 \sum_{k,l\in\mathbb{Z}^*,\ |k-l|=N,\ |k|,|l|> N/2}|c_k c_l|\\
&\leq \sum_{n\in\mathbb{Z}^*}|c_n|^2+\alpha\sum_{k,l\in\mathbb{Z}^*,\ |k-l|=N,\ |k|\leq N/2}\frac{\epsilon}{2}|c_k|^2+\frac{1}{2\epsilon}|c_l|^2\\
&+\frac \alpha 2 \sum_{k,l\in\mathbb{Z}^*,\ |k-l|=N,\ |k|,|l|> N/2}\frac{1}{2}|c_k|^2+\frac{1}{2}|c_l|^2\\
&\leq\sum_{n\in\mathbb{Z}^*,\ |n|\leq N/2}\left(1+\alpha\epsilon\right)|c_n|^2+\sum_{n\in\mathbb{Z}^*,\ |n|> N/2}\left(1+\alpha+\frac{\alpha}{2\epsilon}\right)|c_n|^2\\
&\leq\sum_{n\in\mathbb{Z}^*,\ |n|\leq N/2}\left(1+\alpha\epsilon\right)|c_n|^2+\sum_{n\in\mathbb{Z}^*,\ |n|> N/2}\left(2+\frac{\alpha}{2\epsilon}\right)|c_n|^2.\\
\end{align*}

We choose $\epsilon$ such that $\left(2+\frac{\alpha}{2\epsilon}\right)=\frac{N+1}{2}$; we take $\epsilon=\frac{\alpha}{N-3}$. Thus:
\begin{align*}
\sum_{k,l\in\mathbb{Z}^*}c_k\overline{c_l}\Theta_{l-k}&\leq \sum_{n\in\mathbb{Z}^*,\ |n|\leq N/2}\left(1+\frac{\alpha^2}{N-3}\right)|c_n|^2+\sum_{n\in\mathbb{Z}^*,\ |n|> N/2}|n c_n^2|\\
&\leq \left(1+\frac{\alpha^2}{N-3}\right)\sum_{n\in\mathbb{Z}^{*}}|nc_n^2|.
\end{align*}
This gives the second inequality.\bigbreak

For $N\geq 4$, we get $\deff(\Theta)\leq 1$, and so we can apply the result \ref{EstimationSobolev} to $\Theta$. This gives:
\[|\widehat{\Theta}(N)|\leq C\sqrt{N\deff(\Theta)},\]
for a certain constant $C>0$, and so:
\[\frac \alpha 2 \leq C\sqrt{N\deff(\Theta)}.\]
which is equivalent to the first inequality, up to replacing $C$ with $2C$.\medskip

\end{proof}

\begin{prop}
Let $\epsilon>0$, $\alpha>0$, there exists a sequence of domains $\Omega_n$ such that 
\begin{align*}
 &|\partial\Omega_n|=2\pi\text{ for all }n,\\
 &\sup_{n\in\mathbb{N}}\Vert \log |g_n'|\Vert_{\C^{\alpha}}<\infty,\\
 &\sup_{n\geq 0} \frac{\sigma_1(\D)-\sigma_1(\Omega_n)}{ \Big (\overline{d_H}(\Omega_n, \D)\Big )^{2-\epsilon}}<\infty.
\end{align*}
\end{prop}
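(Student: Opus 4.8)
The plan is to construct an explicit family of weights on $\partial \D$ of the form treated in Lemma \ref{EstimateAbove}, namely $\Theta_n(e^{it}) = 1 + \alpha_n \cos(N_n t)$ with amplitudes $\alpha_n$ and frequencies $N_n$ tending to infinity in a coordinated way, and then to realize each such weight geometrically. Recall from Lemma \ref{EstimateAbove} that for this sinusoidal weight one has two-sided control of the deficit, $\frac{\alpha_n^2}{CN_n} \leq \deff(\Theta_n) \leq \frac{\alpha_n^2}{N_n - 3}$. The strategy is to choose $\alpha_n, N_n$ so that $\deff(\Theta_n) \to 0$ (so that $\sigma_1(\D, \Theta_n) \to 1 = \sigma_1(\D)$), while simultaneously the associated domain $\Omega_n$ is at a Hausdorff distance from the disk that is comparable to $\|\Theta_n - 1\|_{L^\infty} = \alpha_n$. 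Since $\deff(\Theta_n) \asymp \alpha_n^2 / N_n$ and the asymmetry is $\asymp \alpha_n$, the ratio $\deff / (\overline{d_H})^{2-\epsilon} \asymp \alpha_n^2 N_n^{-1} / \alpha_n^{2-\epsilon} = \alpha_n^\epsilon N_n^{-1}$, which we can keep bounded (indeed drive to $0$) by taking $N_n \to \infty$, for instance with $\alpha_n$ fixed or slowly varying.

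First I would fix the weight $\Theta_n(e^{it}) = 1 + \alpha \cos(N_n t)$ for a small fixed $\alpha \in ]0,1[$ and $N_n \to \infty$; this automatically satisfies $\widehat{\Theta_n}(0) = 1$ and $\widehat{\Theta_n}(\pm 1) = 0$ once $N_n \geq 2$, so the normalization hypotheses are met. Next I would pass from the weight on the disk to a genuine planar domain. By the Proposition on realizability, there is a holomorphic $g_n$ with non-vanishing derivative and $|g_n'|_{|\partial \D} = \Theta_n$; moreover since $\alpha < 1/5$ one has $\|\Theta_n - 1\|_{L^\infty} \leq 1/5$, so Proposition \ref{overlap} guarantees the domain $\Omega_n := g_n(\D)$ does not overlap, i.e. $g_n$ is univalent and $\Omega_n$ is a bona fide simply connected domain. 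The perimeter is $\int_{\partial \D} |g_n'| \, d\sigma = \int_{\partial \D} \Theta_n \, d\sigma = 2\pi$, giving the first requirement. By the conformal invariance Lemma, $\sigma_1(\Omega_n) = \sigma_1(\D, \Theta_n) = 1/(1 + \deff(\Theta_n))$, so $\sigma_1(\D) - \sigma_1(\Omega_n) = 1 - \frac{1}{1+\deff(\Theta_n)} \asymp \deff(\Theta_n) \asymp \alpha^2 / N_n$.

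For the $\C^\alpha$ bound I would verify that $\|\log|g_n'|\|_{\C^{0,\alpha}}$ stays bounded uniformly in $n$. This is the delicate point: the weights $\Theta_n = 1 + \alpha\cos(N_n t)$ oscillate at frequency $N_n \to \infty$, so $\|\log \Theta_n\|_{\C^{0,\beta}}$ blows up like $N_n^\beta$ for any $\beta > 0$, and in particular the natural $\C^{0,\alpha}$ norm on $\partial\D$ is \emph{not} uniformly bounded if $\alpha$ is fixed. Hence I expect the main obstacle to be reconciling the required uniform $\C^\alpha$ bound with the high-frequency oscillation. The resolution is to let the amplitude decay with the frequency: choose $\alpha_n = N_n^{-\alpha}$ (matching the H\"older exponent in the statement), so that $\|\log|g_n'|\|_{\C^{0,\alpha}} \asymp \alpha_n N_n^\alpha = 1$ stays bounded. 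With this choice $\overline{d_H}(\Omega_n, \D) \asymp \alpha_n$ (via the Bloch-theorem Lemma, noting the asymmetry is comparable to $\|\Theta_n - 1\|_{L^\infty} = \alpha_n$ up to absolute constants, using that the sinusoidal profile genuinely displaces the boundary), and $\deff(\Theta_n) \asymp \alpha_n^2 / N_n = N_n^{-2\alpha - 1}$. Then
\[
\frac{\sigma_1(\D) - \sigma_1(\Omega_n)}{\big(\overline{d_H}(\Omega_n, \D)\big)^{2-\epsilon}} \asymp \frac{N_n^{-2\alpha-1}}{N_n^{-\alpha(2-\epsilon)}} = N_n^{-1 + \alpha\epsilon},
\]
which is bounded (and tends to $0$) for $N_n \to \infty$ as soon as $\alpha\epsilon < 1$; since $\epsilon$ can be taken small this holds, establishing all three requirements and showing the stability exponent cannot be improved below $2$.
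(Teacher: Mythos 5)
There is a genuine gap, and it sits at the single step you flagged as needing justification: the claim that $\overline{d_H}(\Omega_n,\D)\asymp\Vert\Theta_n-1\Vert_{L^\infty(\partial\D)}=\alpha_n$. The Bloch-theorem lemma in the paper gives only the \emph{upper} bound $\overline{d_H}(\Omega,\D)\leq 3\Vert |g'|-1\Vert_{L^\infty(\partial\D)}$, which is the wrong direction here: to bound the ratio $\big(\sigma_1(\D)-\sigma_1(\Omega_n)\big)/\big(\overline{d_H}(\Omega_n,\D)\big)^{2-\epsilon}$ from above you need a \emph{lower} bound on the asymmetry. And the matching lower bound at order $\alpha_n$ is false for high-frequency weights: since $g_n'(z)\approx 1+\alpha_n z^{N_n}$ (up to $\mathcal{O}(\alpha_n^2)$, as one sees from expanding $\log g_n'$), integration gives $g_n(z)\approx z+\frac{\alpha_n}{N_n+1}z^{N_n+1}$, so the oscillation of $|g_n'|$ at frequency $N_n$ averages out and the boundary is displaced only by $\mathcal{O}(\alpha_n/N_n)$, i.e.\ $\overline{d_H}(\Omega_n,\D)\asymp \alpha_n/N_n$, smaller than your claim by a full factor of $N_n$. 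This is exactly why the paper, instead of invoking Bloch, expands $\log g_n'$ in a power series and evaluates $g_n(\pm 1)$ explicitly (taking $n$ odd, so both bulges point outward and no translation can absorb them), obtaining the lower bound $\overline{d_H}(\Omega_n,\D)\geq c\,a_n/n$.

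With the corrected asymmetry your bookkeeping breaks: taking $\alpha_n=N_n^{-\alpha}$, the true ratio is
\begin{equation*}
\frac{\deff(\Theta_n)}{\big(\overline{d_H}(\Omega_n,\D)\big)^{2-\epsilon}}\asymp \frac{N_n^{-2\alpha-1}}{N_n^{-(\alpha+1)(2-\epsilon)}}=N_n^{\,1-\epsilon(1+\alpha)},
\end{equation*}
which is unbounded whenever $\epsilon(1+\alpha)<1$ --- e.g.\ $\alpha=1$, $\epsilon=0.1$ --- so your construction fails precisely in the regime of small $\epsilon$ that the proposition is about. The fix (the paper's route) is to take the amplitude much smaller: $a_n=n^{-\frac{1-\epsilon}{\epsilon}}$, which yields $\deff(\Theta_n)/\big(\overline{d_H}\big)^{2-\epsilon}\leq C\,a_n^\epsilon n^{1-\epsilon}$, bounded; the a priori bound $\Vert\log|g_n'|\Vert_{\C^{0,\alpha}}\lesssim \sum_{k\geq 1}(nk)^\alpha a_n^k$ is then controlled because $n^\alpha a_n$ is bounded for $\epsilon\leq\frac{1}{1+\alpha}$, and one may assume $\epsilon$ small without loss of generality since $\overline{d_H}(\Omega_n,\D)\to 0$ makes the statement for small $\epsilon$ imply it for larger $\epsilon$. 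Everything else in your proposal --- the choice of sinusoidal weights, the two-sided deficit estimate, realizability and non-overlapping via the $1/5$ criterion, conformal invariance, and the perimeter normalization --- matches the paper and is sound.
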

\begin{rem}
Here $\alpha$ can be arbitrarily large, which means we can ask for a much stronger a priori bound and the optimal exponent is still bigger than $2$.
\end{rem}
\begin{proof}
Consider the sequence $\Omega_n=g_n(\D)$ defined by the weight $\Theta_n(t)=1+a_n\cos(nt)$, for a sequence $a_n$ that will go to $0$. Since $\Vert \Theta_n-1\Vert_{L^\infty(\partial\D)}$ is less than $\frac{1}{5}$ for $n$ big enough, we know that this defines a domain that does not overlap. The above estimate gives us that:
\[\deff(\Theta_n)\leq \frac{a_n^2}{n-3}\leq C\frac{a_n^2}{n}.\]
For a constant $C>0$. Let $h_n$ be defined by $h_n=\log\left(|g_n'|\right)$: it is the unique harmonic function verifying $h_n=\log(\Theta_n)$ on the boundary. We develop it as:
\begin{align*}
h_n(e^{it})=&\log\left(1+a_n\cos(nt)\right)\\
=&\sum_{k\geq 1}\frac{(-1)^{k-1}}{k}a_n^{k}\cos(nt)^k\\
=&\sum_{k\geq 1}\left(\frac{(-1)^{k-1}}{k}a_n^{k}\sum_{p=0}^{\lfloor k/2\rfloor}\frac{1}{2^k}\binom{k}{p}(2\cos(n(2k-p)t)1_{p\neq k/2}+1_{p=k/2})\right).
\end{align*}
In particular, since $h_n$ is harmonic, its expression in the disk is:
\[h_n(re^{it})=\sum_{k\geq 1}\left(\frac{(-1)^{k-1}}{k}a_n^k\sum_{p=0}^{\lfloor k/2\rfloor}\frac{1}{2^k}\binom{k}{p}(2r^{n(2k-p)}\cos(n(2k-p)t)1_{p\neq k/2}+1_{p=k/2})\right).\]
We can verify the a priori hypothesis on $h_n=\log\left(|g_n'|\right)$; there is a constant $C_\alpha>0$ such that:

\[\left[\log\left(|g_n'|\right)\right]_{\C^{0,\alpha}(\partial\D)}\leq C_\alpha\sum_{k\geq 1}(nk)^\alpha a_n^k,\]
which is bounded as soon as $n^{\alpha}a_n$ is bounded: this will be verified later. $h_n$ is the real part of the holomorphic function: 
\[\log(g_n'(z))=ib_n+\sum_{k\geq 1}\left(\frac{(-1)^{k-1}}{k}a_n^k\sum_{p=0}^{\lfloor k/2\rfloor}\frac{1}{2^k}\binom{k}{p}(2z^{2k-p}1_{p\neq k/2}+1_{p=k/2})\right),\]
for a certain branch of the logarithm and $b_n\in\mathbb{R}$. In particular, for a certain constant $C>0$:
\[|g_n(z)-ib_n-a_n z^n|\leq \sum_{k\geq 2}\frac{1}{k}a_n^k\leq Ca_n^2.\]
This means that $g_n'(z)=e^{\log(g_n'(z))}=e^{ib_n}\left(1+a_n z^n+k_n(z)\right)$ where $k_n$ is an holomorphic function that verifies $|k_n(z)|\leq Ca_n^2$ for a certain constant $C$.

We lose no generality in supposing that $b_n=0$ for all $n$. Suppose now that $n$ is odd, $d_H(\Omega_n,\D)$ can be estimated from below by:
\[g_n(1)=\int_0^1 g_n'(r)dr=1+\frac{a_n}{n}+\mathcal{O}\left(\frac{a_n^2}{n}\right),\]
\[g_n(-1)=\int_0^1 g_n'(r)dr=-1-\frac{a_n}{n}+\mathcal{O}\left(\frac{a_n^2}{n}\right).\]
Thus, for a certain constant $c>0$:
\[\overline{d_H}(\Omega_n,\D)\geq c\frac{a_n}{n}.\]
Using this and the upper bound $\deff(\Theta_n)\leq C\frac{a_n^2}{n}$, we obtain:

\[\frac{\sigma_1(\D)-\sigma_1(\Omega_n)}{ \Big (\overline{d_H}(\Omega_n, \D)\Big )^{2-\epsilon}}\leq C a_n^\epsilon n^{1-\epsilon},\]
which is bounded for $a_n=n^{-\frac{1-\epsilon}{\epsilon}}$. For a small enough $\epsilon$, $(n^\alpha a_n)$ is bounded, so the a priori condition holds, which proves the result.
\end{proof}

\noindent
{\bf Acknowledgments.} Both authors were supported by the LabEx PERSYVAL-Lab GeoSpec (ANR-11-LABX-0025-01) and ANR SHAPO (ANR-18-CE40-0013). The authors are thankful to Iosif Polterovich for very interesting remarks and suggestions.

\bibliographystyle{plain}
\bibliography{References}

\def\cprime{$'$}
\begin{thebibliography}{10}

\bibitem{Ahl38}
Lars~V. Ahlfors.
\newblock An extension of {S}chwarz's lemma.
\newblock {\em Trans. Amer. Math. Soc.}, 43(3):359--364, 1938.

\bibitem{Bog17}
Beniamin Bogosel.
\newblock The {S}teklov spectrum on moving domains.
\newblock {\em Appl. Math. Optim.}, 75(1):1--25, 2017.

\bibitem{BDPR12}
Lorenzo Brasco, Guido De~Philippis, and Berardo Ruffini.
\newblock Spectral optimization for the {S}tekloff-{L}aplacian: the stability
  issue.
\newblock {\em J. Funct. Anal.}, 262(11):4675--4710, 2012.

\bibitem{BFNT}
Dorin Bucur, Vincenzo Ferone, Carlo Nitsch, and Cristina Trombetti.
\newblock Weinstock inequality in higher dimensions.
\newblock {\em Journal of Differential Geometry (to appear)}, 2020.

\bibitem{FS19}
Ailana Fraser and Richard Schoen.
\newblock Shape optimization for the {S}teklov problem in higher dimensions.
\newblock {\em Adv. Math.}, 348:146--162, 2019.

\bibitem{FMP08}
N.~Fusco, F.~Maggi, and A.~Pratelli.
\newblock The sharp quantitative isoperimetric inequality.
\newblock {\em Ann. of Math. (2)}, 168(3):941--980, 2008.

\bibitem{GMPT19}
Nunzia Gavitone, Domenico Angelo~La Manna, Gloria Paoli, and Leonardo Trani.
\newblock A quantitative {W}einstock inequality.
\newblock {\em Arxiv 1903.04964}, 2019.

\bibitem{GP17.2}
Alexandre Girouard and Iosif Polterovich.
\newblock Spectral geometry of the {S}teklov problem [reprint of {MR}3662010].
\newblock In {\em Shape optimization and spectral theory}, pages 120--148. De
  Gruyter Open, Warsaw, 2017.

\bibitem{GP17.1}
Alexandre Girouard and Iosif Polterovich.
\newblock Spectral geometry of the {S}teklov problem (survey article).
\newblock {\em J. Spectr. Theory}, 7(2):321--359, 2017.

\bibitem{HP18}
Antoine Henrot and Michel Pierre.
\newblock {\em Shape variation and optimization}, volume~28 of {\em EMS Tracts
  in Mathematics}.
\newblock European Mathematical Society (EMS), Z\"{u}rich, 2018.
\newblock A geometrical analysis, English version of the French publication [
  MR2512810] with additions and updates.

\bibitem{HPS75}
J.~Hersch, L.~E. Payne, and M.~M. Schiffer.
\newblock Some inequalities for {S}tekloff eigenvalues.
\newblock {\em Arch. Rational Mech. Anal.}, 57:99--114, 1975.

\bibitem{Pom92}
Ch. Pommerenke.
\newblock {\em Boundary behaviour of conformal maps}, volume 299 of {\em
  Grundlehren der Mathematischen Wissenschaften [Fundamental Principles of
  Mathematical Sciences]}.
\newblock Springer-Verlag, Berlin, 1992.

\bibitem{WE54}
Robert Weinstock.
\newblock Inequalities for a classical eigenvalue problem.
\newblock {\em J. Rational Mech. Anal.}, 3:745--753, 1954.

\bibitem{W2}
Robert Weinstock.
\newblock Inequalities for a classical eigenvalue problem.
\newblock {\em Department of Math., Stanford Univ., Tech. Rep.}, 37, 1954.

\end{thebibliography}

\end{document}